\documentclass{amsart}

% These will be typeset in italics
\newtheorem{theorem}{Theorem}[section]
\newtheorem{proposition}[theorem]{Proposition}
\newtheorem{lemma}[theorem]{Lemma}
\newtheorem{corollary}[theorem]{Corollary}
\newtheorem{fact}[theorem]{Fact}

% These will be typeset in Roman
\theoremstyle{definition}
\newtheorem{definition}[theorem]{Definition}

\theoremstyle{remark}
\newtheorem{remark}[theorem]{Remark}
\newtheorem{example}[theorem]{Example}

%Independence Symbol

\def\Ind{\setbox0=\hbox{$x$}\kern\wd0\hbox to 0pt{\hss$\mid$\hss} \lower.9\ht0\hbox to 0pt{\hss$\smile$\hss}\kern\wd0} 

\def\Notind{\setbox0=\hbox{$x$}\kern\wd0\hbox to 0pt{\mathchardef \nn=12854\hss$\nn$\kern1.4\wd0\hss}\hbox to 0pt{\hss$\mid$\hss}\lower.9\ht0 \hbox to 0pt{\hss$\smile$\hss}\kern\wd0} 

\def\ind{\mathop{\mathpalette\Ind{}}}

%Independence Symbol

%composition symbol

\newcommand{\compcent}[1]{\vcenter{\hbox{$#1\circ$}}} 

\newcommand{\comp}{\mathbin{\mathchoice 
{\compcent\scriptstyle}{\compcent\scriptstyle} 
{\compcent\scriptscriptstyle}{\compcent\scriptscriptstyle}}} 

% composition symbol

\numberwithin{equation}{section}

\def \d {\delta}

\def \dd {\partial}
\def \D {\Delta}
\def \t {\theta}
\def \T {\Theta}
\def \l {\langle}
\def \r {\rangle}
\def \L {\Lambda}
\def \I {\mathcal I}
\def \P {\mathcal P}
\def \NN {\mathbb N}
\def \ZZ {\mathbb Z}

\def \U {\mathbb U}

\def \la {\langle}
\def \ra {\rangle}
\def \al { \alpha}
\def \DD {\mathcal D}
\def \ta {\tau_{\DD/\D}}
\def \ld {\ell_s}
\def \G {\mathcal G}

\def \et {d_{D/\D}}

\title[Relative D-groups and differential Galois theory]{Relative D-groups and differential Galois theory in several derivations}
\author[Omar Le\'on S\'anchez]{Omar Le\'on S\'anchez \\
University of Waterloo \\
July 2013}
\date{}

\begin{document}

\begin{abstract}
The Galois theory of logarithmic differential equations with respect to \emph{relative D-groups} in partial differential-algebraic geometry is developed. This theory generalizes simultaneusly the parametrized Picard-Vessiot theory of Cassidy and Singer and the finite-dimensional theory of Pillay's generalized strongly normal extensions.
\end{abstract}

\maketitle

\begin{center}
{\it Keywords: differential Galois theory, D-groups, model theory.}
\end{center}

\begin{center}
{\it AMS 2010 Mathematics Subject Classification: 03C60, 12H05.}
\end{center}

\section*{Introduction}

In \cite{Ko} Kolchin gives a thorough exposition of the fundamental results of the differential Galois theory for strongly normal extensions in partial differential fields of characteristic zero. This theory extended the Picard-Vessiot theory~(\cite{Ko3}, \cite{VS}), by enlarging the class of differential field extensions with \emph{good} Galois groups of automorphisms. He showed that the Galois group of a strongly normal extension can be identified with an algebraic group in the (total) constants and that this group is not necessarily a linear algebraic group (in contrast with the Picard-Vessiot case). 

In Pillay's work \cite{Pi} the role of the constants in the definition of strongly normal extension was succesfully replaced by an arbitrary finite-dimensional definable set $X$ (i.e. a set of differential type zero), and there he presented the more general theory of $X$-strongly normal extensions (Kolchin's extensions being the case when $X$ is the field of constants). Pillay showed that the Galois group of an $X$-strongly normal extension is a finite-dimensional differential-algebraic group (not necessarily in the constants). It is worth pointing out that in \cite{Pi} Pillay actually works in the ordinary case (only one derivation); however, his results extend more or less immediately to the partial case as long as $X$ is finite-dimensional. One of the intentions of this paper is to extend Pillay's results to possibly infinite-dimensional $X$.

In \cite{Pi2}, Pillay observes that his generalized strongly normal extensions can be viewed as Galois extensions corresponding to \emph{logarithmic differential equations on D-groups}. Briefly, a D-group is an algebraic group $G$ equipped with a section $s:G\to \tau G$, where $\tau G$ is the (algebraic) prolongation of $G$ with respect to the derivations $\Pi=\{\d_1,\dots,\d_m\}$. There is a natural crossed-homomorphism $\ld :G\to \tau G_e$ called the \emph{logarithmic derivative} (see \S 5 below) associated to a D-group. The equations of which Pillay's strongly normal extensions are Galois extensions are of the form $\ld x=a$. When $G=GL_n$ and $s$ is the zero section, one recovers the classical Galois theory of linear differential equations (i.e. the Picard-Vessiot theory).

The theory of D-groups is very much part of finite-dimensional differential-algebraic geometry because the set of sharp points $$(G,s)^\#:=\{g\in G: s(g)=(g,\d_1 g,\dots,\d_m g)\}$$ is a finite-dimensional $\Pi$-algebraic group (working inside a model of $DCF_{0,m}$) and all finite-dimensional definable groups are, up to definable isomorphism, of this form.

To find an infinite-dimensional analogue of Pillay's differential Galois theory we introduce \emph{relative D-groups w.r.t. $\DD/\D$}, namely a $\D$-algebraic group $G$, where $\D\subset \Pi$, and a $\D$-section $s:G\to \tau_{\DD/\D}G$, where $\DD:=\Pi\setminus \D$ and $\tau_{\DD/\D}G$ is the \emph{relative prolongation (w.r.t. $\DD/\D$)} that was first introduced in \cite{Le}. We show that every definable group of differential type less than the number of derivations is the set of sharp points of a relative D-group in the above sense, up to definable isomorphism and possibly after replacing $\Pi$ by some independent linear combination.

We introduce logarithmic differential equations on relative D-groups, define the associated Galois extensions, and show that they correspond to the $X$-strongly normal extensions for arbitrary (i.e. not necessarily finite-dimensional) definable sets $X$. In the case $G=GL_n$ and $s$ the zero section, we recover the parametrized Picard-Vessiot theory considered by Cassidy and Singer \cite{PM}.

Another generalization of Kolchin's strongly normal extensions in this direction was carried out by Landesman in \cite{La} . His extensions are a special case of what we present here. Indeed, the differential Galois theory we present generalizes Landesman's in more or less the same way that Pillay's generalized Kolchin's.

The paper is organized as follows. In Section 1 we discuss differential type which will play a central role for us. In Section 2 we review/develop the fundamental results of generalized strongly normal extensions but in the possibly infinite-dimensional setting (i.e. arbitrary differential type). Then, in Sections 3 and 4, we develop the basic theory of relative D-varieties and relative D-groups, respectively. In Section 5, we reinterpret generalized strongly normal extensions as the Galois extensions associated to logarithmic differential equations on relative D-groups. Finally, in Section 6, we give two examples of non-linear Galois groups associated to logarithmic differential equations.

\

\noindent {\it Acknowledgements.} I would like to thank Rahim Moosa for all the useful discussions and support towards the completion of this article and Anand Pillay for his helpful comments on an earlier draft.

\section{Preliminaries on differential type}\label{prel}

We will work in the language of partial differential rings with $m$ derivations $\mathcal L_{m}=\{0,1,+,-,\times\}\cup\{\d_1,\dots,\d_m\}$, where $\Pi=\{\d_1,\dots,\d_m\}$ are the distinguished derivations. By $DCF_{0,m}$ we will mean the $\mathcal L_{m}$-theory of differentially closed fields of characteristic zero with $m$ commuting derivations. In \cite{Mc}, McGrail shows that $DCF_{0,m}$ is a complete theory that admits elimination of quantifiers and of imaginaries and it is $\omega$-stable. All these model-theoretic properties will be used throughout the paper.

Let us fix a sufficiently saturated $(\U,\Pi)\models DCF_{0,m}$ and a ground $\Pi$-field $K<\U$. We will identify all definable sets with their $\U$-points. Given any $A\subset \U$, we will denote by $K\{ A\}_{\Pi}$ and $K\l A\r_{\Pi}$ the $\Pi$-ring and $\Pi$-field generated by $A$ over $K$, respectively. The $\Pi$-constants of any $\Pi$-field $F\leq \U$ will be denoted by $F^{\Pi}=\{a\in F:\, \d a=0\text{ for all } \d\in \Pi\}$. 

We denote by $\T_\Pi$ the set of $\Pi$-derivatives; that is, $\T_\Pi=\{\d_m^{e_m}\cdots \d_1^{e_1}:\, e_i\geq 0\}$. By $\T_\Pi(h)$ we mean the set of derivatives of order less than or equal to $h\in \NN$. Recall that the $\Pi$-ring of $\Pi$-polynomials over $K$ in the differential indeterminates $x=(x_1,\dots,x_n)$ is just $K\{x\}_{\Pi}=K[\t x_i:\, i=1,\dots,n, \, \t\in \T_\Pi]$ and the $\Pi$-field of $\Pi$-rational functions over $K$, $K\l x\r_{\Pi}$, is just the field of fractions of $K\{x\}_{\Pi}$.

\begin{fact}[\cite{Ko}, Chap.2, \S12]\label{poli}
Let $a=(a_1,\dots,a_n)$ be a tuple from $\U$. There exists a numerical polynomial $\omega_{a/K}$ with the following properties:
\begin{enumerate}
\item For sufficiently large $h\in \NN$, $\omega_{a/K}(h)$ equals the transcendence degree of $K(\t a_i:\, i=1,\dots,n,\, \t\in \T_\Pi(h))$ over $K$.
\item deg $\omega_{a/K}\leq m$.
\item If we write $\omega_{a/K}=\sum_{i=0}^m d_i$$x+i\choose{i}$ where $d_i\in \ZZ$, then $d_m$ equals the $\Pi$-transcendence degree of $K\l a\r_{\Pi}$ over $K$.
\item If $b$ is another tuple such that $b\in K\l a\r_{\Pi}$, then there is $h_0\in \NN$ such that for sufficiently large $h\in \NN$ we have $\omega_{b/K}(h)\leq \omega_{a/K}(h+h_0)$.
\end{enumerate}
\end{fact}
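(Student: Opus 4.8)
The statement is classical (it is quoted from the cited reference), so I only sketch the standard route and indicate where the real work lies. The plan is to reduce everything to a lattice-point count and then invoke the combinatorics of numerical polynomials on $\NN^m$. First I would pass to the prime $\Pi$-ideal $I=I(a/K)\subseteq K\{x\}_\Pi$ of $\Pi$-polynomials over $K$ vanishing at $a$, fix an orderly ranking on the derivatives $\{\t x_i:\t\in\T_\Pi,\ i\le n\}$, and take a characteristic set $\Lambda$ of $I$. By the standard reduction theory for characteristic sets, a derivative $\t a_i$ is algebraic over the field generated by the strictly lower-ranked derivatives of $a$ precisely when the operator $\t x_i$ lies in the monoid-ideal of $\T_\Pi$ generated by the leaders of $\Lambda$. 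Consequently the transcendence degree in (1) equals, for each fixed $h$, the number of pairs $(\t,i)$ with $\mathrm{ord}(\t)\le h$ whose underlying derivative operator does \emph{not} lie in this monoid-ideal. This identifies the non-led derivatives as a transcendence basis and turns the problem into counting lattice points outside a monoid-ideal.

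The main work is then a purely combinatorial lemma. Identifying $\T_\Pi$ with $\NN^m$ via exponent vectors, for any finite $E\subseteq\NN^m$ the function $h\mapsto |\{v\in\NN^m:\ |v|\le h,\ v\notin[E]\}|$, where $[E]$ is the monoid-ideal generated by $E$, agrees for large $h$ with a numerical polynomial of degree $\le m$. I would prove this by induction on $m$ (equivalently, via the Hilbert function of the associated monomial ideal), the base point being that the number of $v\in\NN^m$ with $|v|\le h$ is exactly $\binom{x+m}{m}$, a polynomial of degree $m$. Summing these counts over the finitely many indices $i\le n$ yields the existence of $\omega_{a/K}$ together with the bound $\deg\omega_{a/K}\le m$, which are properties (1) and (2). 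That the coefficients $d_i$ in the binomial expansion are integers is automatic, since $\omega_{a/K}$ takes integer values at all large integers and such polynomials expand with integer coefficients in the basis $\binom{x+i}{i}$.

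For property (3) I would read off the top coefficient. The complement of a \emph{nontrivial} monoid-ideal in $\NN^m$ has counting polynomial of degree strictly less than $m$, while an index $i$ none of whose derivatives is led contributes the full $\binom{x+m}{m}$. Thus the coefficient $d_m$ of $\binom{x+m}{m}$ counts exactly the indices $i$ for which the entire orbit $\{\t x_i:\t\in\T_\Pi\}$ survives the reduction; by construction this number is the $\Pi$-transcendence degree of $K\l a\r_\Pi$ over $K$, giving (3). For property (4), since $b\in K\l a\r_\Pi$ each coordinate $b_j$ is a $\Pi$-rational function of $a$ over $K$, so I fix $h_0$ bounding the orders of the derivatives of $a$ occurring in fixed such expressions; applying $\t$ of order $\le h$ and using the product and quotient rules shows $\t b_j\in K(\t' a_i:\ \mathrm{ord}(\t')\le h+h_0)$. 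Every generator of the field computing $\omega_{b/K}(h)$ therefore lies in the field computing $\omega_{a/K}(h+h_0)$, and comparing transcendence degrees yields $\omega_{b/K}(h)\le\omega_{a/K}(h+h_0)$.

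The hard part is the combinatorial lemma of the second paragraph together with the verification that the characteristic-set reduction genuinely identifies the non-led derivatives as a transcendence basis; once that dictionary between transcendence degree and lattice-point counts is established, the four assertions are essentially bookkeeping.
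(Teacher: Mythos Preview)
The paper does not supply a proof of this statement: it is recorded as a \emph{Fact} with a citation to Kolchin (Chap.~2, \S12) and is used without further argument. Your sketch is essentially the classical proof found in that reference --- pass to a characteristic set of the prime $\Pi$-ideal $\I(a/K)_\Pi$ for an orderly ranking, identify the transcendence degree in (1) with the number of non-leader derivative operators of order $\le h$, and invoke the combinatorial lemma that the complement of a monoid ideal in $\NN^m$ is counted by a numerical polynomial of degree $\le m$. The deductions of (2), (3), and (4) from this dictionary are as you describe. So there is nothing to compare against in the paper itself; your outline matches the standard source being cited.

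One small point worth tightening in your write-up of (3): you identify $d_m$ with the number of indices $i$ whose orbit has no leader, and then assert this equals the $\Pi$-transcendence degree of $K\la a\ra_\Pi$ over $K$. That last equality is true but is itself a (short) lemma --- one checks that the parametric $a_i$ are $\Pi$-algebraically independent and that every element of $K\la a\ra_\Pi$ is $\Pi$-algebraic over the $\Pi$-field they generate. Kolchin does this explicitly, and it is worth flagging as a step rather than folding it into ``by construction.''
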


The polynomial $\omega_{a/K}$ is called the \emph{Kolchin polynomial of} $a$ \emph{over} $K$. Even though the Kolchin polynomial is not in general a $\Pi$-birational invariant, (4) of Fact \ref{poli} shows that its degree is. We call this degree the $\Pi$\emph{-type of} $a$ \emph{over} $K$ and denote it by $\Pi$-type$(a/K)$. Similarly, if we write $\omega_{a/K}=\sum_{i=0}^m d_i$$x+i\choose{i}$ where $d_i\in \ZZ$, the coefficient $d_{\tau}$, where $\tau=\Pi$-type$(a/K)$, is a $\Pi$-birational invariant. We call $d_{\tau}$ the \emph{typical} $\Pi$\emph{-dimension} of $a$ \emph{over} $K$ and denote it by $\Pi$-dim$(a/K)$. We will adopt the convention that the degree and leading coefficient of the zero polynomial are both zero, so if $\omega_{a/K}=0$ then $\Pi$-type$(a/K)=0$ and $\Pi$-dim$(a/K)=0$.

As a remark, to those familiar with Laskar rank, let us mention that $RU(a/K)\geq \omega^m$ if and only if $\Pi$-type$(a/K)=m$ \footnote{This equivalence is well known, though, to the author's knowledge, a proof has not appeared anywhere.}. Also, we have that $\Pi$-type$(a/K)=0$ implies $RU(a/K)<\omega$ (it is not known if the converse holds).

A key theorem regarding $\Pi$-type and typical $\Pi$-dimension is the following:

\begin{fact}[\cite{Ko}, Chap. 2, \S 13]\label{Kolteo}
Let $a$ be a tuple from $\U$. Then there is a set $\D$ of linearly independent elements of the $K^{\Pi}$-vector space $\operatorname{span}_{K^\Pi}\Pi$ with $|\D|=\Pi$-type$(a/K)$, and a finite tuple $\al$ from $\U$, such that
\begin{displaymath}
K\la a\ra_{\Pi}=K\la\al\ra_{\D}
\end{displaymath}
with $\D$-type$(\al/K)=\Pi$-type$(a/K)$ and $\D$-dim$(\al/K)=\Pi$-dim$(a/K)$.
\end{fact}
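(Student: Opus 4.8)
The plan is to reduce the number of derivations one at a time, inducting on the excess $m-\tau$, where $\tau=\Pi\text{-type}(a/K)$. The base case $m=\tau$ is immediate, taking $\D=\Pi$ and $\al=a$. For the inductive step it suffices to establish a single-derivation reduction: if $\tau<m$, there is a $K^\Pi$-basis $\{\epsilon_1,\dots,\epsilon_m\}$ of $\operatorname{span}_{K^\Pi}\Pi$ such that, writing $\Pi'=\{\epsilon_1,\dots,\epsilon_{m-1}\}$, some finite tuple $\al'$ of $\Pi$-derivatives of $a$ satisfies
\[
K\langle a\rangle_\Pi=K\langle\al'\rangle_{\Pi'},\quad \Pi'\text{-type}(\al'/K)=\tau,\quad \Pi'\text{-dim}(\al'/K)=\Pi\text{-dim}(a/K).
\]
Granting this, the induction hypothesis applied to $\al'$ as a differential field with the $m-1$ commuting derivations $\Pi'$ (whose excess is now $m-1-\tau$) produces the desired $\D$ of size $\tau$ and finite tuple $\al$. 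I would perform \emph{all} basis changes with coefficients in $K^\Pi$; this is possible since $K^\Pi\supseteq\QQ$ is infinite and only finitely many proper algebraic conditions need be avoided, it keeps $\D\subseteq\operatorname{span}_{K^\Pi}\Pi$, and since these combinations have \emph{constant} coefficients the new operators again commute and generate the same $\Pi$-field, so each stage is internally consistent.

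For the single step I would analyze a characteristic set. Fix an orderly ranking on the derivatives of $x=(x_1,\dots,x_n)$ and let $\Lambda$ be a characteristic set of the prime $\Pi$-ideal $\I(a/K)\subseteq K\{x\}_\Pi$. The parametric derivatives---those $\t x_i$ that are not derivatives of any leader of $\Lambda$---index a transcendence basis of $K\langle a\rangle_\Pi$ over $K$, and counting those of order $\le h$ gives $\omega_{a/K}(h)$ for large $h$, a numerical polynomial of degree $\tau<m$ by hypothesis. The idea is that degree $\tau<m$ means the growth of $K\langle a\rangle_\Pi$ is genuinely $\tau$-dimensional, so that after choosing $\epsilon_m$ generic in $\operatorname{span}_{K^\Pi}\Pi$ and completing it to a basis, one derivation's worth of growth becomes redundant. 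Precisely, I would show there is $N\in\NN$ such that every $\Pi$-derivative of $a$ lies in the field generated over $K$ by the $\Pi'$-derivatives of the finite seed $\al':=(\t a:\t\in\T_\Pi(N))$: the high $\epsilon_m$-order derivatives in the dropped direction get expressed, via the relations encoded in $\Lambda$, in terms of $\Pi'$-derivatives of the seed together with lower-order data. This yields $K\langle\al'\rangle_{\Pi'}=K\langle a\rangle_\Pi$.

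To see the invariants are preserved I would sandwich the two Kolchin polynomials. A constant-coefficient basis change preserves the order of differential operators; together with the fact that $\al'$ consists of derivatives of $a$ of order $\le N$ and that the recovery above uses only boundedly many extra differentiations, one gets a constant $C$ with
\[
\omega^{\Pi'}_{\al'/K}(h)\le\omega_{a/K}(h+C)\quad\text{and}\quad\omega_{a/K}(h)\le\omega^{\Pi'}_{\al'/K}(h+C)
\]
for all large $h$, where $\omega^{\Pi'}_{\al'/K}$ is the $\Pi'$-Kolchin polynomial of $\al'$. A shift by a constant changes neither the degree nor the leading coefficient of a numerical polynomial, so both polynomials have degree $\tau$ and the same leading coefficient; that is, $\Pi'\text{-type}(\al'/K)=\tau$ and $\Pi'\text{-dim}(\al'/K)=\Pi\text{-dim}(a/K)$, as required. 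This is the same mechanism underlying Fact \ref{poli}(4).

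The main obstacle is the combinatorial heart of the single step: proving that degree $\tau<m$, after a generic $K^\Pi$-linear change of the derivations, produces a finite seed order $N$. The difficulty is that such a change acts nonlinearly on the monomials---$\epsilon_m^k$ expands into many $\Pi$-monomials---so the leaders' staircase is not simply transported, and one must control how the dimension-$\tau$ growth interacts with a generic direction. This is exactly Kolchin's analysis of the numerical polynomials attached to subsets of $\NN^m$ and of the dependence of characteristic sets on the ranking and on the choice of derivation operators (\cite{Ko}, Chap. 0 and II). Once it is in place, the induction, the invariance of the generated field under constant-coefficient basis change, and the sandwiching above are all routine.
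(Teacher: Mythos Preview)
The paper does not prove this statement: it is stated as a Fact with a citation to \cite{Ko}, Chap.~2, \S13, and no proof is given. So there is no proof in the paper to compare your proposal against.

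Your outline is a plausible sketch of an inductive reduction, and your honesty about the ``main obstacle'' is well placed: the genuine content is precisely the combinatorial claim that when $\deg\omega_{a/K}=\tau<m$, a generic $K^\Pi$-linear change of derivations makes one direction redundant after a finite seed. You defer this to Kolchin, which is appropriate, but it means your write-up is more a roadmap than a proof. One point to be careful about in the induction: when you pass from $\Pi$ to $\Pi'$ and invoke the inductive hypothesis, the statement you are applying speaks of $\operatorname{span}_{K^{\Pi'}}\Pi'$, and $K^{\Pi'}$ may strictly contain $K^\Pi$. Your remark that all basis changes can be taken with coefficients in $K^\Pi$ addresses this, but you should make explicit that the inductive hypothesis is being applied in a form that keeps the coefficients in the original $K^\Pi$ throughout, not just at a single step.
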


We introduce the following terminology for the sake of convenience.

\begin{definition}[\it{Bounding and witnessing $\Pi$-type}]
Let $a$ be a finite tuple from $\U$, a set $\D$ of linearly independent elements of the $K^{\Pi}$-vector space $\operatorname{span}_{K^\Pi}\Pi$ is said to \emph{bound the $\Pi$-type of $a$ over $K$} if the $\Pi$-field $K\l a\r_{\Pi}$ is finitely $\D$-generated over $K$. Moreover, if $\D$ bounds the $\Pi$-type of $a$ over $K$ and $|\D|=\Pi$-type$(a/K)$ then we say that $\D$ \emph{witnesses the $\Pi$-type of $a$ over $K$}. 
\end{definition}

Fact \ref{Kolteo} says that given any finite tuple $a$ we can always find $\D$ witnessing the $\Pi$-type of $a$ over $K$.

\begin{lemma}\label{onb}
Let $a$ be a tuple from $\U$, $L$ a $\Pi$-field extension of $K$ and $\D$ a set of linearly independent elements of $\operatorname{span}_{K^\Pi}\Pi$. If $a\ind_K L$, then $\D$ bounds the $\Pi$-type$(a/L)$ if and only if it bounds the $\Pi$-type$(a/K)$.
\end{lemma}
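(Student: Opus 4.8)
The plan is to reduce the bounding condition to a finite closure criterion and then to transfer that criterion across the two bases. Write $\T_\Pi(N)a$ for the finite tuple $(\theta a:\theta\in\T_\Pi(N))$. First I would note that for any $\Pi$-field $F$ with $K\le F\le\U$, the set $\D$ bounds $\Pi$-type$(a/F)$ if and only if $F\langle a\rangle_\Pi=F\langle\T_\Pi(N)a\rangle_\D$ for some $N\in\NN$. Indeed, since $\D\subseteq\operatorname{span}_{K^\Pi}\Pi$, every $\D$-derivative of a $\Pi$-derivative of $a$ is again a $\Pi$-derivative of $a$, so $F\langle a\rangle_\Pi$ is the increasing union of the $\D$-fields $F\langle\T_\Pi(N)a\rangle_\D$; if it is finitely $\D$-generated over $F$ the generators already lie in one of them, forcing equality. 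Because $\D$ and $\Pi$ are pairwise commuting, $F\langle\T_\Pi(N)a\rangle_\D=F\langle a\rangle_\Pi$ holds exactly when the $\D$-field $F\langle\T_\Pi(N)a\rangle_\D$ is closed under $\Pi$, and since it is $\D$-generated over $F$ by $\T_\Pi(N)a$ this amounts to the finite list of conditions
\[ \d\,\theta a\in F\langle\T_\Pi(N)a\rangle_\D\qquad(\d\in\Pi,\ \theta\in\T_\Pi(N)). \]
In this form the implication from $K$ to $L$ needs no independence hypothesis: if $K\langle\T_\Pi(N)a\rangle_\D$ is $\Pi$-closed then each $\d\,\theta a$ lies in it, hence in $L\langle\T_\Pi(N)a\rangle_\D$, so the latter is $\Pi$-closed too.

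For the converse I would use $a\ind_K L$. Assuming $\D$ bounds $\Pi$-type$(a/L)$, fix $N$ so that $\d\,\theta a\in L\langle\T_\Pi(N)a\rangle_\D$ for all $\d\in\Pi$ and $\theta\in\T_\Pi(N)$. As $L\langle\T_\Pi(N)a\rangle_\D$ is the union of the fields $L(v)$ with $v$ a finite tuple of $\D$-derivatives of $\T_\Pi(N)a$, one finite tuple $v$ (of $\Pi$-derivatives of $a$, so with entries in $K\langle a\rangle_\Pi$) works for all these finitely many conditions: $\d\,\theta a\in L(v)$. The heart of the matter is the descent statement that for $x,\bar y\in K\langle a\rangle_\Pi$, $x\in L(\bar y)$ implies $x\in K^{\mathrm{alg}}(\bar y)$, where $K^{\mathrm{alg}}$ is the algebraic closure of $K$ in $\U$. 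This is where the hypothesis is used: in $DCF_{0,m}$, $a\ind_K L$ is equivalent to $K\langle a\rangle_\Pi$ and $L$ being free over $K$, which upgrades to linear disjointness of $K^{\mathrm{alg}}\langle a\rangle_\Pi$ and $K^{\mathrm{alg}}L$ over $K^{\mathrm{alg}}$; by the tower property $K^{\mathrm{alg}}(\bar y,x)$ and $L(\bar y)$ are then linearly disjoint over $K^{\mathrm{alg}}(\bar y)$, so $x$, lying in both, lies in $K^{\mathrm{alg}}(\bar y)$.

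Applying this to the finitely many conditions, all $\d\,\theta a$ lie in $K_1\langle\T_\Pi(N)a\rangle_\D$ for a single finite field extension $K_1=K(\beta_1,\dots,\beta_s)$ with $\beta_j\in K^{\mathrm{alg}}$; here $K_1$ is automatically a $\Pi$-field, since the derivative of an element algebraic over a $\Pi$-field is rational in it. Thus $K_1\langle\T_\Pi(N)a\rangle_\D$ is $\Pi$-closed, so by the criterion $K_1\langle a\rangle_\Pi$ is finitely $\D$-generated over $K_1$, hence over $K$. Finally, since $K\langle a\rangle_\Pi\subseteq K_1\langle a\rangle_\Pi$ with $[K_1\langle a\rangle_\Pi:K\langle a\rangle_\Pi]\le[K_1:K]<\infty$, I would descend finite $\D$-generation through this finite extension: a finite field extension of a $\D$-field is finitely $\D$-generated over it (derivatives of algebraic elements being rational in them), and from this it follows that an intermediate $\D$-field over which a finitely $\D$-generated $\D$-field is finite is itself finitely $\D$-generated. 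Hence $K\langle a\rangle_\Pi$ is finitely $\D$-generated over $K$, i.e.\ $\D$ bounds $\Pi$-type$(a/K)$.

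I expect the main obstacle to be the descent statement together with its unavoidable ``algebraic versus rational'' gap. The independence $a\ind_K L$ yields membership only in $K^{\mathrm{alg}}(\bar y)$ and not in $K(\bar y)$ --- this cannot be improved, as the case $a\in\operatorname{acl}(K)\cap L$ already shows --- which forces the passage through the finite extension $K_1$ and the subsequent descent of finite $\D$-generation along $K_1/K$. Pinning down the correct translation of the model-theoretic statement $a\ind_K L$ into linear disjointness over $K^{\mathrm{alg}}$, and then disposing of these finite algebraic adjustments, is the technical core; the reduction to the finite closure criterion and the direction from $K$ to $L$ are routine.
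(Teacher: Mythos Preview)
Your argument is correct, but the paper's proof is shorter because it sidesteps the passage through $K^{\mathrm{alg}}$ entirely. The paper also reduces to a ``$\Pi$-closure of a finitely $\D$-generated field'' criterion, but with one extra layer of derivatives: starting from $\alpha=\T_\Pi(N)a$ with $L\langle a\rangle_\Pi=L\langle\alpha\rangle_\D$, it shows directly that $K\langle\alpha,\d_1\alpha,\dots,\d_m\alpha\rangle_\D$ is closed under $\Pi$. The trick is that for $w$ in this field one only needs $w$ to be \emph{algebraic} over $K(\rho)$ for some tuple $\rho$ of $\D$-derivatives of $\alpha$, since then $\d w\in K(w,\rho,\d\rho)$ and all of $w,\rho,\d\rho$ already lie in the proposed field. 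Algebraic disjointness of $K\langle a\rangle_\Pi$ and $L$ over $K$ (which is exactly what $a\ind_K L$ gives) transfers algebraic dependence of $(w,\rho)$ from $L$ down to $K$ without any loss, so no $K^{\mathrm{alg}}$ appears. Your route---linear disjointness over $K^{\mathrm{alg}}$, landing in $K^{\mathrm{alg}}(\bar y)$, then descending finite $\D$-generation along a finite extension $K_1/K$---is valid, and your final descent step (an intermediate $\D$-field over which a finitely $\D$-generated $\D$-field is finite is itself finitely $\D$-generated) can be justified by taking the coefficients of the minimal polynomials of the generators. But the paper's device of enlarging the generating set by one order and arguing with algebraicity rather than rationality is what lets it avoid that whole detour.
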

\begin{proof}
Assume $\D$ bounds the $\Pi$-type$(a/L)$. Then we can find $\alpha$ a tuple of the form $(a,\partial_1 a,\dots,\partial_s a:\, \partial_i\in \T_\Pi, s<\omega)$ such that $L\l a\r_{\Pi}=L\l \alpha \r_{\D}$; here if $a=(a_1,\dots,a_n)$ and $\partial \in \T_\Pi$ then $\partial a=(\partial a_1,\dots,\partial a_n)$. It suffices to prove the following: 

\vspace{.07in}

\noindent {\bf Claim.} $K\l a\r_{\Pi}=K\l \alpha,\d_{1}\alpha,\dots,\d_m\alpha\r_{\D}$, where $\Pi=\{\d_1,\dots,\d_m\}$.

\vspace{.05in}

\noindent Let $w\in K\l \alpha,\d_{1}\alpha,\dots,\d_m\alpha\r_{\D}$, we need to show that $\d w\in K\l \alpha,\d_{1}\alpha,\dots,\d_m\alpha\r_{\D}$ for all $\d\in \Pi$. Since $w\in L\l \alpha \r_{\Pi}=L\l \alpha\r_{\D}$, then $w=f(\alpha)$ for some $\D$-rational function $f$ over $L$. Thus, there is a tuple $\beta$ of the form $(\al,\partial_1'\al,\dots,\partial_r'\al:\, \partial_i'\in \T_\D, r< \omega)$ such that $w=g(\beta)$ for some rational function $g$ over $L$. Let $\rho$ be a tuple whose entries form a maximal algebraically independent (over $K$) subset of the set consisting of the entries of $\beta$. Then $(w,\rho)$ is algebraically dependent over $L$. But since $a\ind_K L$, $K\l a\r_\Pi$ is algebraically disjoint from $L$ over $K$. Thus, since $(w,\rho)$ is from $K\l a\r_{\Pi}$, we get that $(w,\rho)$ is algebraically dependent over $K$. Thus, there is $h\in K[x,y]$ such that $h(w,\rho)=0$. Moreover, since $\rho$ is algebraically independent over $K$, $w$ appears non trivially in $h(w,\rho)$. Hence, $w$ is algebraic over $K(\rho)$ and so $\d w\in K(w,\rho,\d\rho)\subseteq K\l \alpha,\d_{1}\alpha,\dots,\d_m\alpha\r_{\D}$.
\end{proof}

Given a complete type $p$ over $K$ its Kolchin polynomial $\omega_p$ is defined to be $\omega_{a/K}$ for any $a$ realising $p$. Thus it makes sense to talk about the $\Pi$-type and typical $\Pi$-dimension of complete types. Now we extend these concepts to definable sets. First recall that we can put a total ordering on numerical polynomials by eventual domination, i.e. $f\leq g$ if and only if $f(h)\leq g(h)$ for all sufficiently large $h\in \NN$.

\begin{definition}
Let $X$ be a definable set. The Kolchin polynomial of $X$ is defined by
\begin{displaymath}
\omega_X=\max\left\{ \omega_{a/F}:\, a\in X\right\},
\end{displaymath}
where $F$ is any $\Pi$-field over which $X$ is defined (the fact that this does not depend on $F$ is a consequence of Theorem~4.3.10 of \cite{Mc}). In Lemma \ref{max} below we will see that $\omega_X=\omega_{a/F}$ for some $a\in X$. We define the $\Pi$-type of $X$ and its typical $\Pi$-dimension in the obvious way. Also, $X$ is said to be \emph{finite-dimensional} if $\Pi$-type$(X)=0$.
\end{definition}

Let $V$ be an irreducible affine $\Pi$-algebraic variety defined over $K$. We say that $a$ is a $\Pi$\emph{-generic point of $V$ over $K$} if $a\in V$ but $a$ is not in any proper $\Pi$-algebraic subvariety of $V$ defined over $K$. The \emph{generic type of $V$ over $K$} is the complete type over $K$ of any $\Pi$-generic point of $V$ over $K$.

\begin{lemma}\label{max}
Let $X$ be a $K$-definable set and let $V_1, \dots, V_s$ be the $K$-irreducible components of the $\Pi$-closure of $X$ over $K$ (in the $\Pi$-Zariski topology). If $p_i$ is the generic type of $V_i$ over $K$, then
\begin{displaymath}
\omega_X=\max\{\omega_{p_i}:\, i=1,\dots,s\}.
\end{displaymath}
\end{lemma}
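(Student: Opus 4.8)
The plan is to prove the two inequalities $\omega_X\leq \max_i\omega_{p_i}$ and $\omega_X\geq\max_i\omega_{p_i}$ separately; together they give the lemma and, as a byproduct, show that the supremum defining $\omega_X$ is actually attained. For the first inequality the guiding principle is that the generic type of an irreducible $\Pi$-variety carries the largest Kolchin polynomial among all types concentrating on it. Since the $\Pi$-closure of $X$ over $K$ is $V_1\cup\cdots\cup V_s$ and $X\subseteq V_1\cup\cdots\cup V_s$, every $a\in X$ lies in some $V_i$. Fixing a realization $g\models p_i$, the point $a\in V_i$ satisfies every $K$-polynomial $\Pi$-relation that $g$ satisfies, since the defining $\Pi$-ideal of $V_i$ over $K$ is exactly the $\Pi$-ideal of $g$ and is contained in that of $a$. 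Hence for all large $h$ the tuple $\{\t a:\t\in\T_\Pi(h)\}$ satisfies at least as many algebraic relations over $K$ as $\{\t g:\t\in\T_\Pi(h)\}$, so by Fact~\ref{poli}(1) we get $\omega_{a/K}(h)=\operatorname{tr.deg}_K K(\t a:\t\in\T_\Pi(h))\leq \operatorname{tr.deg}_K K(\t g:\t\in\T_\Pi(h))=\omega_{p_i}(h)$. Thus $\omega_{a/K}\leq\omega_{p_i}\leq\max_j\omega_{p_j}$ for every $a\in X$, which yields $\omega_X\leq\max_j\omega_{p_j}$.

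For the reverse inequality it suffices to show that each generic type $p_i$ is realized in $X$, for then $\omega_{p_i}\leq\omega_X$ for all $i$. Fix $i$ and set $U_i=V_i\setminus\bigcup_{j\neq i}V_j$, a nonempty $\Pi$-open subset of $V_i$ by irredundancy of the components, on which every realization of $p_i$ lives. Writing $X=(X\cap U_i)\sqcup\big(X\cap\bigcup_{j\neq i}V_j\big)$ and taking $\Pi$-closures, the inclusion $V_i\subseteq\overline{X}=\overline{X\cap U_i}\cup\bigcup_{j\neq i}V_j$, together with the irreducibility of $V_i$ and the fact that $V_i\not\subseteq\bigcup_{j\neq i}V_j$, forces $\overline{X\cap U_i}=V_i$. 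Now I would invoke elimination of quantifiers for $DCF_{0,m}$ to write $X\cap U_i$, relative to $V_i$, as a finite union $\bigcup_k(A_k\cap B_k^{c})$ with $A_k,B_k$ $\Pi$-closed over $K$. Since the $\Pi$-closure of this union is the irreducible set $V_i$, some term must have $A_k=V_i$ with $B_k\cap V_i$ a proper $\Pi$-closed subset of $V_i$. A realization $g\in\U$ of $p_i$ then lies in $V_i=A_k$, avoids each proper $\Pi$-closed set $B_k\cap V_i$ and $V_i\cap V_j$ (for $j\neq i$), and hence satisfies $g\in A_k\cap B_k^{c}\subseteq X$. Therefore $p_i$ is realized in $X$.

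Combining the two inequalities gives $\omega_X=\max_i\omega_{p_i}$; moreover, if $p_{i_0}$ achieves the maximum then its realization $g\in X$ satisfies $\omega_X=\omega_{g/K}$, so the supremum in the definition of $\omega_X$ is attained. I expect the main obstacle to be the second inequality: the passage from the soft statement that $X$ is merely $\Pi$-dense in each component $V_i$ to the genuine conclusion that the generic point of $V_i$ satisfies the defining formula of $X$. This is precisely where elimination of quantifiers is indispensable, as it reduces membership in the definable set $X$ to a Boolean condition on $\Pi$-closed sets that can be tested directly against the generic type.
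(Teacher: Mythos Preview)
Your argument for the inequality $\omega_X\leq\max_i\omega_{p_i}$ is exactly the paper's: any $b\in X$ lies in some $V_i$, and the containment $\I(V_i/K)_\Pi=\I(g/K)_\Pi\subseteq\I(b/K)_\Pi$ (which the paper phrases as the existence of a $K$-$\Pi$-ring homomorphism $K\{g\}_\Pi\to K\{b\}_\Pi$ sending $g\mapsto b$) yields $\omega_{b/K}(h)\leq\omega_{p_i}(h)$ for large $h$.

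For the reverse inequality you actually do more than the paper, whose written proof stops after the first direction and leaves ``each $p_i$ is realized in $X$'' implicit. Your argument via quantifier elimination is correct and is the standard way to fill this gap. One small point of hygiene: when you write $X\cap U_i=\bigcup_k(A_k\cap B_k^{c})$ ``relative to $V_i$'', it is cleanest to first replace $A_k,B_k$ by $A_k\cap V_i,\,B_k\cap V_i$ so that everything sits inside $V_i$; then $\overline{X\cap U_i}=V_i$ together with irreducibility forces some $A_k=V_i$ with $B_k\subsetneq V_i$, and the $\Pi$-generic of $V_i$ over $K$ avoids both $B_k$ and each $V_j$ ($j\neq i$), hence lies in $A_k\setminus B_k\subseteq X$ as you claim. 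With that adjustment your proof is complete, and indeed more complete than the paper's.
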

\begin{proof}
Let $b\in X$, then $b$ is in some $V_i$. Let $a_i$ be a realisation of $p_i$, then there is a $\Pi$-ring homomorphism $f_i:K\{a_i\}_{\Pi}\to K\{b\}_{\Pi}$ over $K$ such that $a_i\mapsto b$. Thus, for any $h\in \NN$ the transcendence degree of $K(\t a_{i,j}: j=1,\dots,n,\, \t\in \T(h))$ over $K$, where $a_i=(a_{i,1},\dots,a_{i,n})$, is greater than or equal to the transcendence degree of $K(\t b_j: j=1,\dots,n',\, \t\in \T(h))$ over $K$, where $b=(b_1,\dots,b_{n'})$. This implies that, for sufficiently large $h\in \NN$, $\omega_{a_i/K}(h)\geq\omega_{b/K}(h)$ and hence $\omega_{a_i/K}\geq\omega_{b/K}$.
\end{proof}

\begin{definition}
Let $X$ be a $K$-definable set. A set $\D$ of linearly independent elements of $\operatorname{span}_{K^\Pi}\Pi$ is said to \emph{bound the $\Pi$-type of $X$} if $\D$~bounds the $\Pi$-type$(a/K)$ for all $a\in X$. We say $\D$ \emph{witnesses the $\Pi$-type of $X$} if $\D$ bounds the $\Pi$-type of $X$ and $|\D|=\Pi$-type$(X)$.
\end{definition}

By Lemma \ref{onb}, this definition does not depend on the choice of $K$. In other words, $\D$ bounds the $\Pi$-type$(X)$ if and only if there is a $\Pi$-field $F$, over which $X$ and $\D$ are defined, such that $\D$ bounds the $\Pi$-type$(a/F)$ for all $a\in X$. Indeed, suppose that $\D$-bounds the $\Pi$-type of $X$, $F$ is a $\Pi$-field over which $X$ and $\D$ are defined, and $a\in X$. One can assume, without loss of generality, that $F<K$. Let $b$ be a tuple from $\U$ such that $b\ind_F K$ and $tp(b/F)=tp(a/F)$. By assumption, $\D$ bounds the $\Pi$-type$(b/K)$, and so, by Lemma~\ref{onb}, $\D$ bounds the $\Pi$-type$(b/F)$. Hence, $\D$ bounds $\Pi$-type$(a/F)$.

\begin{remark}\label{witg}
It is not clear (at least to the author) that every definable set has a witness to its $\Pi$-type. However, for definable groups one can always find such a witness. To see this let $G$ be a $K$-definable group. Note that, since the property ``$\D$ witnesses the $\Pi$-type'' for definable sets is preserved under definable bijection, a witness to the $\Pi$-type of the connected component of $G$ will be a witness to the $\Pi$-type of $G$. Hence, we may assume that $G$ is connected. Let $p$ be the generic type of $G$ over $K$. By Fact \ref{Kolteo}, there is $\D$, a linearly independent subset of $\operatorname{span}_{K^\Pi}\Pi$, such that for any $a\models p$ the $\Pi$-field $K\l a\r_{\Pi}$ is finitely $\D$-generated over $K$. Let us check that this $\D$ witnesses the $\Pi$-type of $G$. Let $g\in G$, then there are $a,b\models p$ such that $g=a\cdot b$ (see \S 7.2 of \cite{Ma2}). Thus $K\l g\r_{\Pi}\leq K\l a,b\r_{\Pi}$, but since the latter is finitely $\D$-generated over $K$ the former is as well.
\end{remark}

\begin{lemma}\label{onint}
Let $X$ be a $K$-definable set and $a$ a tuple from $\U$. Suppose $tp(a/K)$ is $X$-internal, then
\begin{enumerate}
\item $\Pi\text{-type}(a/K)\leq \Pi\text{-type}(X)$.
\item If $\D$ bounds the $\Pi$-type$(X)$ then $\D$ also bounds the $\Pi$-type$(a/K)$.
\end{enumerate}
\end{lemma}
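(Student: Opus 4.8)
The plan is to unwind $X$-internality into concrete $\Pi$-field data and then read off both clauses from the Kolchin-polynomial machinery of Section~\ref{prel}, using Lemma~\ref{onb} to pass between $K$ and an auxiliary field. Since $DCF_{0,m}$ is $\omega$-stable and eliminates imaginaries, $X$-internality of $tp(a/K)$ provides a $\Pi$-field $L\geq K$ with $a\ind_K L$ together with a finite tuple $c=(c_1,\dots,c_r)$ of elements of $X$ such that $a\in \operatorname{dcl}(L\cup c)$, and by elimination of imaginaries $\operatorname{dcl}(L\cup c)$ is just $L\l c\r_\Pi$. Choosing the parameters $L$ independent from $a$ over $K$ is exactly what will make Lemma~\ref{onb} available; this is the only consequence of internality I use.

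For (1) I would work over $L$ first. As $a\in L\l c\r_\Pi$, Fact~\ref{poli}(4) gives $\omega_{a/L}(h)\leq \omega_{c/L}(h+h_0)$ for large $h$, hence $\Pi$-type$(a/L)\leq \Pi$-type$(c/L)$; and since the degree of the Kolchin polynomial of a tuple is the maximum of those of its entries, $\Pi$-type$(c/L)=\max_i \Pi$-type$(c_i/L)$. Because $K\subseteq L$ and each $c_i\in X$, enlarging the base can only lower the Kolchin polynomial, so $\Pi$-type$(c_i/L)\leq \Pi$-type$(c_i/K)\leq \Pi$-type$(X)$; thus $\Pi$-type$(a/L)\leq \Pi$-type$(X)$. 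To descend to $K$, let $\D_0$ witness $\Pi$-type$(a/L)$ via Fact~\ref{Kolteo}, so $|\D_0|=\Pi$-type$(a/L)$ and $\D_0$ bounds $\Pi$-type$(a/L)$; as $a\ind_K L$, Lemma~\ref{onb} promotes this to ``$\D_0$ bounds $\Pi$-type$(a/K)$''. This forces $\Pi$-type$(a/K)\leq |\D_0|$ (the easy direction: since $L\l a\r_\Pi=L\l\al\r_{\D_0}$ is $\Pi$-closed, the derivatives in directions outside $\D_0$ are $\D_0$-rational in $\al$, so a $\Pi$-derivative of order $h$ lands among $\D_0$-derivatives of $\al$ of order $O(h)$, making $\omega_{a/K}$ dominated by a polynomial of degree $|\D_0|$). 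Together with the trivial $\Pi$-type$(a/L)\leq \Pi$-type$(a/K)$ this yields $\Pi$-type$(a/K)=\Pi$-type$(a/L)\leq \Pi$-type$(X)$.

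For (2), suppose $\D$ bounds $\Pi$-type$(X)$, so $\D$ bounds $\Pi$-type$(c_i/K)$ for each $i$. I would first record an unconditional ``base change upward'': if $K\l c_i\r_\Pi=K\l\gamma_i\r_\D$ then $L\l c_i\r_\Pi=L\l\gamma_i\r_\D$, since for $w\in L\l\gamma_i\r_\D$ and $\d\in\Pi$ the derivative $\d w$ only involves $\d$ applied to the $L$-coefficients (which remain in $L$) and to the $\gamma_i$, and $\d\gamma_i\in K\l\gamma_i\r_\D$ because $K\l\gamma_i\r_\D=K\l c_i\r_\Pi$ is $\Pi$-closed. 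Hence $\D$ bounds $\Pi$-type$(c_i/L)$, and so $L\l c\r_\Pi=L\l\gamma_1,\dots,\gamma_r\r_\D$ is finitely $\D$-generated over $L$. Now $a\in L\l c\r_\Pi$, and if I can show $\D$ bounds $\Pi$-type$(a/L)$ then Lemma~\ref{onb} (using $a\ind_K L$ once more) gives that $\D$ bounds $\Pi$-type$(a/K)$, completing the proof.

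The crux is therefore the inheritance statement: \emph{a $\Pi$-subfield that is $\Pi$-generated by a finite tuple inside a finitely $\D$-generated $\Pi$-field $M=L\l\gamma\r_\D$ is again finitely $\D$-generated over $L$}, and I expect this to be the main obstacle. It reduces to showing that the increasing chain of $\D$-fields $F_N=L\l\{\t a:\t\in\T_\Pi\text{ of order}\leq N\}\r_\D$ stabilizes in $N$: once $F_{N+1}=F_N$, this field already contains one further $\Pi$-derivative of each generator, so by the Leibniz rule it is $\Pi$-closed and equals $L\l a\r_\Pi$, which is then finitely $\D$-generated. Because $M$ has finite $\D$-transcendence degree over $L$, the $\D$-transcendence degree of the $F_N$ stabilizes, and from that point each inclusion $F_N\subseteq F_{N+1}$ is $\D$-algebraic. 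The delicate part, and the reason a naive argument fails, is that $\D$-algebraicity does not by itself give equality: the $\D$-orders of successive derivatives of $a$ in the directions $\Sigma=\Pi\setminus\D$ could a priori grow, and bounding the relevant transcendence degrees inside $M$ only produces bounds that drift with $N$. Turning this into genuine stabilization calls for a characteristic-set / Ritt--Raudenbush argument tailored to the relative ``$\D$ inside $\Pi$'' situation---in effect the mechanism underlying Fact~\ref{Kolteo}, but executed for the prescribed $\D$ rather than for a witnessing set chosen after the fact.
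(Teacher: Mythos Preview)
Your approach is the same as the paper's: unpack internality to get $L\geq K$ with $a\ind_K L$ and a tuple $c$ from $X$ with $a\in L\l c\r_\Pi$, then for (1) chain $\Pi\text{-type}(a/K)=\Pi\text{-type}(a/L)\leq\Pi\text{-type}(c/L)\leq\Pi\text{-type}(X)$ using Lemma~\ref{onb} for the equality, and for (2) argue that $L\l c\r_\Pi$ is finitely $\D$-generated over $L$, that the $\Pi$-subfield $L\l a\r_\Pi$ inherits this, and then descend to $K$ via Lemma~\ref{onb}.

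The only place you diverge from the paper is in treating the inheritance step as an obstacle. The paper dispatches ``$L\l a\r_\Pi\leq L\l c\r_\Pi$ and $L\l c\r_\Pi$ finitely $\D$-generated over $L$ implies $L\l a\r_\Pi$ finitely $\D$-generated over $L$'' in one line, because this is a classical theorem of Kolchin (\cite{Ko}, Chap.~II, \S11): in characteristic zero, any intermediate $\D$-field of a finitely $\D$-generated $\D$-field extension is itself finitely $\D$-generated. Note that nothing ``relative'' is needed here --- $L\l a\r_\Pi$ is in particular a $\D$-field between $L$ and $L\l c\r_\Pi$, so the purely $\D$-statement applies. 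Your stabilization sketch via the chain $F_N$ and Ritt--Raudenbush is in fact how Kolchin proves this, so you have the right idea; you just do not need to reinvent it. Once you invoke that result, your argument is complete and identical to the paper's.
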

\begin{proof} Recall that internality means that there is $L\geq K$ with $a\ind_K L$ and a finite tuple $c$ from $X$ such that $a\in\operatorname{dcl}(L,c)=L\l c\r_\Pi$.

\noindent (1)  For any $d=(d_1,\dots,d_r)$, $\Pi$-type$\displaystyle(d/K)=\max_{i\leq r}\{\Pi$-type$(d_i/K)\}$ (see for example Lemma 3.1 of \cite{Moo}). Hence,
\begin{displaymath}
\Pi\text{-type}(a/K)=\Pi\text{-type}(a/L)\leq \Pi\text{-type}(c/L)\leq \Pi\text{-type}(X),
\end{displaymath}
where the equality follows from Lemma \ref{onb}.

\noindent (2) Assume $\D$ bounds the $\Pi$-type$(X)$, then $L\l c\r_{\Pi}$ is finitely $\D$-generated over $L$. Since $L\l a\r_{\Pi}\leq L \l c\r_{\Pi}$, then $L\l a\r_{\Pi}$ is also finitely $\D$-generated over $L$. But $a\ind_K L$, so, by Lemma \ref{onb}, $\D$ bounds the $\Pi$-type of $a$ over $K$.
\end{proof}

\

\section{Generalized strongly normal extensions}\label{genstrong}

In this section we extend Pillay's strongly normal extensions \cite{Pi} to the possibly infinite-dimensional partial differential setting. In fact most of Pillay's arguments extend to this setting. We therefore focus on the statements rather than the proofs, giving some details only for the sake of review and to fix notation.

Fix a sufficiently saturated $(\U,\Pi)\models DCF_{0,m}$ and a base $\Pi$-field $K < \U$. All definable sets will be identified with their $\U$-points. Unless otherwise specified the notation and terminology of this section will be with respect to the language of $\Pi$-fields; for example, $K\l x\r$ means $K\l x\r_{\Pi}$, generated means $\Pi$-generated, isomorphism means $\Pi$-isomorphism, etc.

\begin{definition}\label{def}
Let $X$ be $K$-definable. A finitely generated $\Pi$-field extension $L$ of $K$ is said to be $X$-\emph{strongly normal} if:
\begin{enumerate}
\item $X(K)=X(\bar L)$ for some (equivalently any) differential closure $\bar L$ of $L$.
\item For any isomorphism $\sigma$ from $L$ into $\U$ over $K$, $\sigma(L)\subseteq L\l X\r$.
\end{enumerate}
\end{definition}

A $\Pi$-field extension $L$ of $K$ is called a \emph{generalized strongly normal extension} of $K$ if it is an $X$-strongly normal extension for some $K$-definable $X$. 

\begin{remark} \label{rem} \
\begin{enumerate} 
\item [(i)] Suppose $\DD=\{\d_1,\dots,\d_r\}$, $\D=\{\d_{r+1},\dots,\d_m\}$ is a partition of the derivations $\Pi=\{\d_1,\dots,\d_m\}$ with $0<r\leq m$. If $X=\U^{\DD}$, then the $X$-strongly normal extensions are exactly the ``$\D$-strongly normal extensions" of Landesman \cite{La}. Indeed, one need only observe that in this case~(1) is equivalent to $K^\DD=L^{\DD}$ is $\D$-closed (see for example Lemma 9.3 of \cite{PM}). 
\item [(ii)] A compactness argument, just as in Lemma 2.5 of \cite{Pi}, , shows that if $L$ is a generalized strongly normal extension of $K$ then $L$ is contained in some  differential closure of $K$. Therefore, condition~(1) can be replaced by 
\begin{displaymath}
  \qquad X(K)=X(\bar K) \text{ and } L\subseteq \bar K \text{ for some differential closure } \bar K \text{ of } K.
\end{displaymath}
\item [(iii)] If $L=K\l a\r$ is an $X$-strongly normal extension and $b$ realizes $tp(a/K)$, then $L'=K\l b\r$ is also an $X$-strongly normal extension. Moreover, if $b\in \bar L$ then $L=L'$. Indeed, if $\sigma$ is an automorphism of $\U$ over $K$ such that $\sigma(a)=b\in \bar L$, then, by (2), $L'\subseteq L\l X\r$. But as $L'\subseteq \bar L\models DCF_{0,m}$, we have that $L'\subseteq L\l X(\bar L)\r=L\l X(K)\r=L$. A similar argument shows $L\subseteq L'$.
\item [(iv)] If $L$ is an $X$-strongly normal extension of $K$ then, as in Remark 2.6 of \cite{Pi}, every isomorphism from $L$ into $\U$ over $K$ extends uniquely to an automorphism of $L\l X\r$ over $K\l X\r$. 
\item [(v)] When $X$ is finite, $X$-strongly normal extensions are just the usual Galois (i.e. finite and normal) extensions of $K$. On the other hand, if $\Pi$-type$(X)=m$ and $X(\bar K)=X(K)$, then $\bar K=K$ (this is because $X$ having $\Pi$-type equal to $m$ implies that $X$ projects $\Pi$-dominantly onto some coordinate). Hence, if $\Pi$-type$(X)=m$, $X$-strongly normal extensions are trivial. Thus, this notion is mainly of interest when $X$ is infinite and of $\Pi$-type strictly less than $m$.
\end{enumerate}
\end{remark}

For the rest of this section we fix a $K$-definable set $X$.

\begin{proposition}\label{inter}
For any $L$ finitely generated $\Pi$-field extension of $K$, condition~(2) of Definition~\ref{def}  is equivalent to $L$ being generated by a fundamental system of solutions of an $X$-internal type over $K$. 
\end{proposition}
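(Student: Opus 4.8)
The plan is to prove the two implications separately, using throughout that $DCF_{0,m}$ eliminates quantifiers, so that a complete type over $K$ is determined by the $\Pi$-field-isomorphism type over $K$ of the $\Pi$-field it generates. I take a \emph{fundamental system of solutions} of an $X$-internal type $p$ over $K$ to mean a finite tuple $\bar a=(a_1,\dots,a_k)$ of realizations of $p$ such that every realization of $p$ lies in $K\l \bar a\r\l X\r$, and I say $L$ is generated by such a system when $L=K\l\bar a\r$. For a generating tuple write $L=K\l a\r$ and $p=tp(a/K)$.

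For the implication from right to left, suppose $L=K\l\bar a\r$ with $\bar a$ a fundamental system of solutions of an $X$-internal type $p$, and let $\sigma$ be an isomorphism from $L$ into $\U$ over $K$. Since $\sigma$ is a $\Pi$-field isomorphism fixing $K$, each $\sigma(a_i)$ has the same quantifier-free type over $K$ as $a_i$, hence by elimination of quantifiers $\sigma(a_i)\models p$. The fundamental-system hypothesis gives $\sigma(a_i)\in L\l X\r$ for each $i$, so $\sigma(L)=K\l\sigma(a_1),\dots,\sigma(a_k)\r\subseteq L\l X\r$, which is condition~(2). Note that this direction uses only that the realizations of $p$ lie in $L\l X\r$, not internality.

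For the converse, assume condition~(2). I first show that every realization of $p=tp(a/K)$ lies in $L\l X\r$: given $b\models p$, saturation of $\U$ provides an automorphism over $K$ taking $a$ to $b$, whose restriction $\sigma$ to $L$ is an isomorphism into $\U$ over $K$ with $\sigma(a)=b$, so $b\in\sigma(L)\subseteq L\l X\r$ by~(2). Thus $a$ is a fundamental system of solutions of $p$ once we know $p$ is $X$-internal. To establish internality, choose $a'\models p$ with $a'\ind_K a$, and apply the automorphism $\pi$ of $\U$ over $K$ sending $a$ to $a'$. Since $\pi$ fixes the $K$-definable set $X$ setwise and permutes the realizations of $p$, the covering property transfers: every realization of $p$, in particular $a$ itself, lies in $\pi(K\l a\r\l X\r)=K\l a'\r\l X\r$. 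Hence $a\in K\l a'\r\l X\r$ with $a\ind_K K\l a'\r$, and as $a$ is a finite tuple finitely many points $c$ from $X$ suffice, so $a\in K\l a'\r\l c\r$; this is precisely $X$-internality of $p$.

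The step I expect to be the main obstacle is this last one: condition~(2) directly places only the $K$-conjugates of $L$ inside $L\l X\r$, and converting this into a genuine internality witness---independent parameters $a'$ together with finitely many points of $X$ over which $a$ becomes definable---requires the automorphism-transfer argument combined with symmetry of nonforking independence (available since $DCF_{0,m}$ is $\omega$-stable). The remaining points are routine bookkeeping with quantifier elimination and finite generation.
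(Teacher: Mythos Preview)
Your proof is correct and follows essentially the same route as the paper's. The only noteworthy difference is that in the forward direction the paper, having shown that every realization of $tp(a/K)$ lies in $\operatorname{dcl}(K,a,c)$ for some tuple $c$ from $X$, simply declares $tp(a/K)$ to be $X$-internal with fundamental system $a$, leaving implicit the standard fact that the existence of a fundamental system implies internality; you spell this step out via the automorphism-transfer argument with an independent copy $a'$, which is exactly the right justification.
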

\begin{proof}
Recall that if a type $p\in S_n(K)$ is $X$-internal there is a sequence $(a_1,\dots,a_\ell)$ of realisations of $p$, called a fundamental system of solutions, such that for every $b\models p$ there is a tuple $c$ from $X$ such that $b\in\operatorname{dcl}(K,a_1,\dots,a_\ell,c)$. 

Suppose $L=K\l a\r$ satisfies condition~(2), we claim that $tp(a/K)$ is $X$-internal and $a$ is itself a fundamental system of solutions. If $b\models tp(a/K)$ there is an automorphism $\sigma$ of $\U$ over $K$ such that $b=\sigma(a)\in L\l X\r$. Hence, $b\in \operatorname{dcl}(K,a,c)$ for some tuple $c$ from $X$. Conversely, suppose $L=K\l a\r$ and $a$ is a fundamental system of solutions of an $X$-internal type over $K$. Note that $tp(a/K)$ is also $X$-internal with fundamental system of solutions $a$ itself. So if $\sigma$ is an isomorphism from $L$ into $\U$ over $K$, then $\sigma(a)\models tp(a/K)$ and hence there is a tuple $c$ from $X$ such that $\sigma(a)\in \operatorname{dcl}(K,a,c)=L\l c\r$. That is $\sigma(L)\subseteq L\l X\r$, as desired.
\end{proof}

If $L=K\l a\r$ is an $X$-strongly normal extension then, by Lemma \ref{onint} and Proposition \ref{inter},
\begin{displaymath}
\Pi\text{-type}(a/K)\leq \Pi\text{-type}(X).
\end{displaymath}
Moreover, if $\D$ bounds the $\Pi$-type of $X$ then $\D$ also bounds the $\Pi$-type of $a$ over $K$. Specializing to the case when $\DD=\{\d_1,\dots,\d_r\}$, $\D=\{\d_{r+1},\dots,\d_m\}$ is a partition of $\Pi$ and $X=\U^{\DD}$, we recover Theorem 1.24 of \cite{La} that every $\U^\DD$-strongly normal extension of $K$ is finitely $\D$-generated over $K$.

Now, let $L=K\l a\r$ be an $X$-strongly normal extension of $K$. Following Pillay (and Kolchin), we define Gal$_X(L/K):=Aut_{\Pi}(L\l X\r/K\l X\r)$; that is, Gal$_X(L/K)$ is the group of automorphisms of $L\l X\r$ over $K\l X\r$. Note that $tp(a/K)^\U$ is in $\operatorname{dcl}(L\cup X)$, and thus, by (iv) of Remark \ref{rem}, the natural action of Gal$_X(L/K)$ on $tp(a/K)^\U$ is regular. We also let gal$(L/K):=Aut_{\Pi}(L/K)$, by (iv) of Remark \ref{rem}, every element of gal$(L/K)$ extends uniquely to an element of Gal$_X(L/K)$. Via this map, we identify gal$(L/K)$ with a subgroup of Gal$_X(L/K)$.

\begin{remark}
If $L$ is also an $X'$-strongly normal extension of $K$ then, by (iv) of Remark~\ref{rem}, there is a (unique) group isomorphism $$\psi:\, \text{Gal}_X(L/K)\to \,\text{Gal}_{X'}(L/K)$$ such that $\sigma|_{L\l X\r\cap L\l X'\r}=\psi(\sigma)|_{L\l X\r\cap L\l X'\r}$. Moreover, this isomorphism preserves the action of these groups on $tp(a/K)^\U$. We therefore often omit the subscript and simply write Gal$(L/K)$.
\end{remark}

The following theorem asserts the existence of the definable Galois group of a generalized strongly normal extension and summarises some of its basic properties.

\begin{theorem}\label{group}
Suppose $L=K\l a\r$ is an $X$-strongly normal extension of $K$.
\begin{enumerate}
\item There is a $K$-definable group $\G$ in $\operatorname{dcl}(K \cup X)$ with an $L$-definable regular action on $tp(a/K)^\U$ such that $\G$ together with its action on $tp(a/K)^\U$ is (abstractly) isomorphic to Gal$(L/K)$ together with its natural action on $tp(a/K)^\U$. We call $\G$ the Galois group of $L$ over $K$.
\item We have that $\Pi$-type$(\G)=\Pi$-type$(a/K)$ and $\Pi$-dim$(\G)=\Pi$-dim$(a/K)$. Moreover, $\D$ bounds the $\Pi$-type of $\G$ if and only if $\D$ bounds the $\Pi$-type of $a$ over $K$. 
\item $L$ is a $\G$-strongly normal extension.
\item The action of $\G$ on $tp(a/K)^\U$ is $K$-definable if and only if $\G$ is abelian.
\item $\G$ is connected if and only if $K$ is relatively algebraically closed in $L$.
\item If $\mu:$Gal$(L/K)\to \G$ is the isomorphism from (1), then $\mu($gal$(L/K))=\G(\bar K)=\G(K)$.
\end{enumerate}	
\end{theorem}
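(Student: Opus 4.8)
This theorem packages the standard Galois-correspondence machinery for strongly normal extensions, and my approach is to build the definable group $\G$ from the regular action of $\mathrm{Gal}(L/K)$ on $p^\U:=tp(a/K)^\U$, then read off each property. The key structural input is (iv) of Remark~\ref{rem}: every $K$-isomorphism of $L$ extends uniquely to an automorphism of $L\la X\ra$ over $K\la X\ra$, so $\mathrm{Gal}_X(L/K)$ acts \emph{regularly} on $p^\U$. Because this action is regular, for any fixed $b_0\models p$ the orbit map $\sigma\mapsto \sigma(b_0)$ is a bijection $\mathrm{Gal}_X(L/K)\to p^\U$, and the group operation transports to a binary function on $p^\U$. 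The heart of part (1) is to show this transported operation is \emph{definable} over the right parameters. Working inside $\mathrm{dcl}(K\cup X)$ and using elimination of imaginaries in $DCF_{0,m}$, I would identify, for $\sigma$ corresponding to $\sigma(a)=b$, a canonical element of $\mathrm{dcl}(K\cup X)$ coding $\sigma$; since $b\in\mathrm{dcl}(K,a,c)$ for a tuple $c$ from $X$ (by $X$-internality via Proposition~\ref{inter}), the relevant data lands in $\mathrm{dcl}(K\cup X)$. This gives a $K$-definable group $\G$ in $\mathrm{dcl}(K\cup X)$ with an $L$-definable (the action needs the extra parameter $a$) regular action on $p^\U$, abstractly isomorphic as an action to $\mathrm{Gal}(L/K)$.

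\textbf{Parts (2)--(6).} Once $\G$ with its regular $L$-definable action on $p^\U$ is in hand, the remaining items are comparatively routine. For (2): since the action is regular and $L$-definable, $\G$ and $p^\U$ are in $L$-definable bijection, so $\Pi$-type and typical $\Pi$-dimension agree, and the statement about $\D$ bounding the $\Pi$-type follows because ``$\D$ witnesses/bounds the $\Pi$-type'' is preserved under definable bijection (as used in Remark~\ref{witg}); I would apply this together with Lemma~\ref{onint} and the displayed inequality following Proposition~\ref{inter}. For (3): one checks directly from Definition~\ref{def} that $L$ is $\G$-strongly normal—condition~(1) holds because $\G\subseteq\mathrm{dcl}(K\cup X)$ inherits $\G(K)=\G(\bar L)$ from $X(K)=X(\bar L)$, and condition~(2) holds because $\sigma(L)\subseteq L\la X\ra\subseteq L\la\G\ra$ after verifying $L\la X\ra=L\la\G\ra$ (the inclusion $L\la\G\ra\subseteq L\la X\ra$ is immediate; the reverse uses that the $\G$-action recovers the full orbit). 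For (4): the action being $K$-definable means the orbit map is $K$-definable, which forces left and right translations to be interdefinable over $K$, giving commutativity; conversely if $\G$ is abelian the parameter $a$ in the action becomes eliminable. For (5): connectedness of $\G$ corresponds to $p^\U$ being a single $K$-irreducible locus, which by Lemma~\ref{max} translates into $K$ being relatively algebraically closed in $L=K\la a\ra$. For (6): $\mu(\mathrm{gal}(L/K))$ consists of those $\sigma$ fixing $L$ setwise, equivalently with $\sigma(a)\in \bar L$; chasing through (iii) of Remark~\ref{rem} identifies these with the $\bar K$-points, and $\G(\bar K)=\G(K)$ because $\G\subseteq\mathrm{dcl}(K\cup X)$ and $X(\bar K)=X(K)$ by Remark~\ref{rem}(ii).

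\textbf{Main obstacle.} The genuinely substantive step is part (1), specifically the \emph{definability} of the group $\G$ and its action inside $\mathrm{dcl}(K\cup X)$. Regularity of the abstract action is cheap, but producing a definable group object requires the full force of elimination of imaginaries in $DCF_{0,m}$ (cited from \cite{Mc}) to code each automorphism by a tuple in $\mathrm{dcl}(K\cup X)$, together with a careful verification that the multiplication and the action are given by $\Pi$-rational/definable data rather than merely existing abstractly. This is exactly the partial-differential, possibly infinite-dimensional analogue of Pillay's binding-group construction, and since the excerpt signals that ``most of Pillay's arguments extend,'' I expect the cleanest route is to invoke the general model-theoretic binding group theorem for internal types (the group of automorphisms of $p^\U$ over $K\cup X$ is type-definable, hence definable by $\omega$-stability) and then check that the $L$-definable identification of this binding group with $p^\U$ via $b_0$ yields the stated type and dimension equalities.
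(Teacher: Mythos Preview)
Your overall strategy is the paper's: construct the binding group from the regular action of $\mathrm{Gal}_X(L/K)$ on $p^\U$ using elimination of imaginaries, then read off (2)--(6). The paper carries out (1) by an explicit construction (compactness gives a $K$-definable $f_0$ with $b=f_0(a,c)$ for $c$ from $X$; quotient the parameter set by the $K$-definable equivalence $f_0(a,c_1)=f_0(a,c_2)$ via EI to obtain $Y\subseteq\operatorname{dcl}(K\cup X)$; transport the group law via $\mu(\sigma)=h(a,\sigma a)$) rather than citing the abstract binding-group theorem, but that is a matter of presentation, and your treatment of (2), (4), (5), (6) is in line with the paper's.

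There is, however, a concrete error in your sketch of (3). The equality $L\langle X\rangle=L\langle\G\rangle$ is \emph{false} in general. From $\G\subseteq\operatorname{dcl}(K\cup X)$ you get only $L\langle\G\rangle\subseteq L\langle X\rangle$; the reverse inclusion can fail badly (take $X=\U^\Pi$ and $\G$ finite: then $L\langle\G\rangle=L$ while $L\langle X\rangle$ contains all constants). Your justification ``the $\G$-action recovers the full orbit'' only yields $p^\U\subseteq\operatorname{dcl}(L\cup\G)$, not $X\subseteq\operatorname{dcl}(L\cup\G)$. The correct route---and what the paper does---is to bypass this comparison entirely: given an isomorphism $\sigma$ of $L$ over $K$, extend it to an element of $\mathrm{Gal}_X(L/K)$ (Remark~\ref{rem}(iv)) and use the $L$-definable action formula from (1) to see $\sigma(a)=\mu(\sigma).a\in\operatorname{dcl}(L\cup\G)$, whence $\sigma(L)\subseteq L\langle\G\rangle$ directly.
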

\begin{proof} \

\noindent (1) The construction is exactly as in \cite{Pi}, but we recall it briefly. Let $Z=tp(a/K)^{\U}$, by (ii)~of Remark \ref{rem}, $tp(a/K)$ is isolated and so $Z$ is $K$-definable. If $b\in Z$ then $b$ is a tuple from $L\l X\r$,  so by compactness we can find a $K$-definable function $f_0(x,y)$ such that for any $b\in Z$ there is a tuple $c$ from $X$ with $b=f_0(a,c)$. Let $Y_0=\{c \in X: f_0(a,c)\in Z\}$, then $Y_0$ is a $K$-definable set of tuples from $X$. Consider the equivalence relation on $Y_0$ given by $E(c_1,c_2)$ if and only if $f_0(a,c_1)=f_0(a,c_2)$, then $E$ is $K$-definable. By elimination of imaginaries we can find a $K$-definable set $Y$ in $\operatorname{dcl}(K\cup X)$ which we can identify with $Y_0/E$. Now define $f:Z\times Y\to Z$ by $f(b,d)=f_0(b,c)$ where $c$ is such that $d=c/E$. Now note that for any $b_1,b_2\in Z$ there is a unique $d\in Y$ such that $b_2=f(b_1,d)$, and so we can write $d=h(b_1,b_2)$ for some $K$-definable function $h$.

Define $\mu:$ Gal$_X(L/K)\to Y$ by $\mu(\sigma)=h(a,\sigma a)$. Then $\mu$ is a bijection. Let $\G$ denote the group with underlying set $Y$ and with the group structure induced by $\mu$. Then $\G$ is a $K$-definable group and is in $\operatorname{dcl}(K\cup X)$. Consider the action of $\G$ on $Z$ induced (via $\mu$) from the action of Gal$_X(L/K)$ on $Z$, i.e. for each $g\in \G$ and $b\in Z$ let $g.b:=\mu^{-1}(g)(b)$. This action is indeed $L$-definable since
\begin{equation}\label{action1}
\mu^{-1}(g)(b)=f(\mu^{-1}(g)(a),h(a,b))=f(f(a,g),h(a,b)).
\end{equation}

\noindent (2) Since $\G$ acts regularly and definably on $tp(a/K)^\U$, the map $g\mapsto g.a$ is a definable bijection between these two $K$-definable sets. Part (2) follows since $\Pi$-type, $\Pi$-dim and the property ``$\D$ bounds the $\Pi$-type" for definable sets are all preserved under extension of the set of parameters, as well as under definable bijection.

\noindent (3) Since $\G$ is in $\operatorname{dcl}(K\cup X)$ and $X(\bar K)=X(K)$, we have that $\G(\bar K)=\G(K)$. If $\sigma$ is an isomorphism from $L=K\l a\r$ into $\U$ over $K$, then, by Remark~\ref{rem}~(iv), we can extend it uniquely to an element of Gal$_X(L/K)$. Then, by (1), we get that $\sigma(a)=\mu (\sigma).a$ is in $\operatorname{dcl}( L\cup\G)$ and so $\sigma(L)\subseteq L\l \G\r$.
\vspace{.05in}

\noindent (4) If the action is $K$-definable, then for any $g_1,g_2\in \G$ we have
\begin{displaymath}
(g_1g_2).a=g_1.(\mu^{-1}(g_2)(a))=\mu^{-1}(g_2)(g_1.a)=(g_2g_1).a,
\end{displaymath}
so $g_1g_2=g_2g_1$. Conversely, suppose $\G$ is abelian and let $\sigma\in$ Gal$(L/K)$ be such that $\sigma(a)=b$, then
\begin{displaymath}
\mu^{-1}(g)(b)=\mu^{-1}(g)(\sigma(a))=\sigma(\mu^{-1}(g)(a))=\sigma(f(a,g))=f(b,g).
\end{displaymath}

Parts (5) and (6) follow from the construction of $\G$ just as in the ordinary case and we therefore refer the reader to Lemma 2.11 and Corollary 2.13 of \cite{Pi}.
\end{proof}

Now, even though the construction of $\G$ in Theorem \ref{group} depends on the choice of $(X,a,Y,f)$, if we choose different data $(X',a',Y',f')$, for the same extension $L$ over $K$, and construct the corresponding $\G'$ and $\mu'$, then $\G$ and $\G'$ will be $K$-definably isomorphic. Moreover, together with any $K$-definable bijection between $tp(a/K)^\U$ and $tp(a'/K)^\U$, this isomorphism gives rise to an isomorphism between the actions of $\G$ on $tp(a/K)^\U$ and $\G'$ on $tp(a'/K)^\U$. 

\begin{example}\label{Gg} 
Suppose we have a partition $\DD=\{\d_1,\dots,\d_r\}$, $\D=\{\d_{r+1},\dots,\d_m\}$ of $\Pi$, $X=\U^\DD$, $L$ is an $X$-strongly normal extension of $K$ and $\G$ is its Galois group. Then $\G$ is in $\operatorname{dcl}(K\cup \U^\DD)$. Hence, $\G$ is a $\D$-algebraic group over $K^\DD$ in the $\DD$-constants. Thus, (1) and (2) of Theorem \ref{group} recover Theorem 1.24 of \cite{La}.
\end{example}

We have the following Galois correspondence, which we state without proof as it follows precisely as in the ordinary case.
\begin{theorem}\label{normal}
Let $L=K\l a\r$ be an $X$-strongly normal extension of $K$ with Galois group $\G$, and let $\mu:$Gal$(L/K)\to \G$ be the isomorphism from Theorem \ref{group}. 
\begin{enumerate}
\item If $K\leq F \leq L$ is an intermediate $\Pi$-field, then $L$ is an $X$-strongly normal extension of $F$. Moreover,
\begin{displaymath}
\G_F:=\mu(\textrm{Gal}(L/F))
\end{displaymath}
is a $K$-definable subgroup of $\G$ and is the Galois group of $L$ over $F$. The map $F\mapsto\G_F$ establishes a 1-1 correspondence between the intermediate differential fields and $K$-definable subgroups of $\G$. 
\item $F$ is an $X$-strongly normal extension of $K$ if and only if $\G_F$ is a normal subgroup of $\G$, in which case $\G/\G_F$ is the Galois group of $F$ over $K$.
\end{enumerate}
\end{theorem}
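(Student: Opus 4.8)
The plan is to run the classical Galois correspondence exactly as in the finite-dimensional case of \cite{Pi}, the only genuinely new bookkeeping being to track $\Pi$-definability over $K$ rather than over $F$. Throughout I would use the regular $L$-definable action of $\G$ on $Z=tp(a/K)^\U$ furnished by Theorem \ref{group}, together with the identification of $\G$ with $\text{Gal}_X(L/K)$ via $\mu$.

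For part (1) I would first check that $L$ is $X$-strongly normal over $F$. Since $K\leq F\leq L\leq\bar L$ we have $X(K)\subseteq X(F)\subseteq X(\bar L)=X(K)$, so condition (1) of Definition \ref{def} holds for $F$; and any isomorphism of $L$ into $\U$ over $F$ is in particular over $K$, so condition (2) for $K$ yields it for $F$. As $L=K\l a\r$ and $K\leq F$, also $L=F\l a\r$. Restriction then embeds $\text{Gal}_X(L/F)=Aut_\Pi(L\l X\r/F\l X\r)$ as a subgroup of $\text{Gal}_X(L/K)$ (an automorphism fixing $F\l X\r$ fixes $K\l X\r$), so $\G_F=\mu(\text{Gal}(L/F))$ is a subgroup of $\G$.

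The crux is the $K$-definability of $\G_F$. For $\sigma\in\text{Gal}_X(L/K)$ one checks that $\sigma$ fixes $F$ if and only if $\sigma a\models tp(a/F)$: the forward direction is clear, and conversely each generator of $F$ has the form $r(a)$ for a $\Pi$-rational function $r$ over $K$, so the formula $r(x)=r(a)$ lies in $tp(a/F)$ and forces $r(\sigma a)=r(a)$. Hence
$$\G_F=\{g\in\G:\ g.a\models tp(a/F)\},$$
which, as the action is $L$-definable and $tp(a/F)$ is isolated (as for $L/K$ in Theorem \ref{group}), is an $L$-definable subgroup of $\G$. \textbf{The main obstacle is to upgrade ``$L$-definable'' to ``$K$-definable''.} Here I would invoke the strong normality hypothesis $X(\bar L)=X(K)$: since $\G\subseteq\operatorname{dcl}(K\cup X)$ there is a $K$-definable surjection $\pi\colon W\to\G$ from a $K$-definable $W\subseteq X^n$, so $\pi^{-1}(\G_F)$ is an $L$-definable subset of a Cartesian power of $X$; and $X(\bar L)=X(K)$ guarantees, exactly as the no-new-constants step does in the ordinary case, that every $L$-definable subset of a power of $X$ is already $K$-definable. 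Pushing forward along $\pi$ then gives that $\G_F$ is $K$-definable. Finally, applying the construction of Theorem \ref{group} to $L/F$ produces a group canonically isomorphic to $\text{Gal}(L/F)$ acting regularly on $tp(a/F)^\U\subseteq Z$; under $\mu$ this is precisely $\G_F$ with its induced action, so $\G_F$ is the Galois group of $L$ over $F$.

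For the bijectivity of $F\mapsto\G_F$ I would use fixed fields, letting $g\in\G$ act on $L$ through $\mu^{-1}(g)\in\text{Gal}_X(L/K)$. Injectivity follows from recovering $F=\{w\in L:\ \mu^{-1}(g)(w)=w\text{ for all }g\in\G_F\}$, i.e. from $\text{Fix}(\text{Gal}_X(L/F))\cap L=F$, the fundamental-theorem statement valid because $L\l X\r/F\l X\r$ is Galois in the strongly normal sense. For surjectivity, given a $K$-definable subgroup $H\leq\G$ I would set $F:=\{w\in L:\ \mu^{-1}(h)(w)=w\text{ for all }h\in H\}$, verify it is an intermediate $\Pi$-field, and check $\G_F=H$ by comparing fixed fields, as in the ordinary case. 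For part (2) I would use the restriction homomorphism $\text{res}\colon\text{Gal}_X(L/K)\to\text{Gal}_X(F/K)$, $\sigma\mapsto\sigma|_{F\l X\r}$. If $F$ is $X$-strongly normal over $K$, condition (2) for $F$ makes $\text{res}$ well-defined and, by Remark \ref{rem}(iv), surjective with kernel $\text{Gal}_X(L/F)$, whence $\G_F\trianglelefteq\G$ and $\G/\G_F\cong\text{Gal}_X(F/K)$ is the Galois group of $F/K$ (the quotient being definable since $DCF_{0,m}$ is $\omega$-stable with elimination of imaginaries). Conversely, if $\G_F\trianglelefteq\G$, condition (1) for $F$ holds automatically as above, and for condition (2) I would extend an isomorphism $\sigma$ of $F$ over $K$ to $\tilde\sigma\in\text{Gal}_X(L/K)$; normality gives $\tilde\sigma\G_F\tilde\sigma^{-1}=\G_F$, i.e. $\text{Gal}(L/\tilde\sigma F)=\text{Gal}(L/F)$, so $\tilde\sigma(F)=F$ by the injectivity already proved, and thus $\sigma(F)=F\subseteq F\l X\r$. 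I expect the $K$-definability descent in part (1) to be the only step requiring real work; the remaining arguments are formal manipulations of the regular action and the restriction map, identical to the finite-dimensional case in \cite{Pi}.
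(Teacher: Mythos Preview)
Your proposal is essentially correct and follows exactly the route the paper indicates: the paper gives no proof at all, merely stating that the argument ``follows precisely as in the ordinary case'' of \cite{Pi}, and your outline is a faithful unpacking of that ordinary-case argument adapted to the partial setting. In particular, your handling of the only genuinely new point---the $K$-definability of $\G_F$---is right: the canonical parameter of any $L$-definable subset $S$ of a power of $X$ lies in $\operatorname{dcl}(L)$, and for any $\sigma\in Aut_\Pi(\U/K)$ the unique extension $\bar\sigma\in\text{Gal}_X(L/K)$ of $\sigma|_L$ fixes $X$ pointwise, hence fixes $S$ setwise, so $\sigma$ fixes the canonical parameter.

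One small imprecision in the converse of part (2): from normality you conclude $\text{Gal}(L/\tilde\sigma F)=\text{Gal}(L/F)$ and then invoke injectivity to get $\tilde\sigma(F)=F$. But injectivity of $F\mapsto\G_F$ applies only to intermediate fields of $L/K$, and a priori $\tilde\sigma(F)\subseteq L\l X\r$ need not lie in $L$. The argument still goes through: from $\tilde\sigma\,\text{Gal}_X(L/F)\,\tilde\sigma^{-1}=\text{Gal}_X(L/F)$ one gets that every element of $\text{Gal}_X(L/F)$ fixes $\tilde\sigma(F)$, and since the fixed field of $\text{Gal}_X(L/F)$ in $L\l X\r$ is $F\l X\r$, this yields $\sigma(F)=\tilde\sigma(F)\subseteq F\l X\r$, which is exactly condition~(2) of Definition~\ref{def}. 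So the conclusion is correct; only the intermediate assertion $\tilde\sigma(F)=F$ should be weakened to $\tilde\sigma(F)\subseteq F\l X\r$.
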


Finally, we have a positive answer to the ``baby" inverse Galois problem. Again, Pillay's proof of Proposition 4.1 of \cite{Pi} extends to this setting.

\begin{proposition}\label{bab}
Let $G$ be a connected definable group with $\Pi$-type$(G)<m$. Then there is a $\Pi$-field $K$, over which $G$ is defined, and a $G$-strongly normal extension $L$ of $K$ such that $G$ is the Galois group of $L$ over $K$.
\end{proposition}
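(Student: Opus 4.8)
The plan is to realize $G$ through its \emph{regular action on a nontrivial torsor}, exactly as in the inverse problem for Picard--Vessiot and strongly normal extensions; since the statement precedes the D-group machinery, I would follow Pillay's argument for Proposition~4.1 of \cite{Pi}. Throughout I take $X=G$, so that the extension $L$ we seek is to be $G$-strongly normal with $G$ acting by left translation. Fix a $\Pi$-field $k$ over which $G$ is defined and, using Remark~\ref{witg}, a set $\D$ witnessing the $\Pi$-type of $G$, so that $|\D|=\Pi$-type$(G)<m$.

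The first real step is the choice of the base field $K\supseteq k$, which must meet two competing requirements: (i) $G(K)=G(\bar K)$, which is condition~(1) of Definition~\ref{def} in the form given by Remark~\ref{rem}(ii); and (ii) there must exist a $G$-torsor $T$ over $K$ with no $K\l G\r$-rational point. Requirement (ii) is essential because a point of $G$ itself already lies in $K\l G\r$ and would give a trivial Galois group; so the generic point $a\in T$ must genuinely live in a twisted torsor, outside $K\l G\r$. The hypothesis $\Pi$-type$(G)<m$ is precisely what makes (i) and (ii) simultaneously achievable: by Remark~\ref{rem}(v), if the $\Pi$-type were $m$ then $G(K)=G(\bar K)$ would force $\bar K=K$ and only trivial extensions would survive, whereas the derivation directions transverse to $\D$ leave room to build a nontrivial torsor while keeping $K$ ``$G$-closed'' (the abstract analogue, in the Picard--Vessiot picture, of taking the constants algebraically closed but a derivation nontrivial).

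With $T$ and $a$ in hand I would set $L=K\l a\r$ and verify $G$-strong normality. Condition~(1) holds by the choice of $K$. For condition~(2), since $T$ is a $G$-torsor, any $K$-isomorphism $\sigma$ sends $a$ to another point of $T$, so $\sigma(a)=g_\sigma\cdot a$ with $g_\sigma\in G(\U)$ and hence $\sigma(a)\in L\l G\r$; equivalently, by Proposition~\ref{inter}, $tp(a/K)$ is $G$-internal with $a$ itself a fundamental system of solutions. Thus $L/K$ is $G$-strongly normal and finitely generated. By Theorem~\ref{group} there is then a definable $\G\in\operatorname{dcl}(K\cup G)$ isomorphic to Gal$(L/K)$ and acting regularly on $Z=tp(a/K)^{\U}$, and this regular action supplies an injective definable homomorphism $\G\hookrightarrow G$, $\sigma\mapsto g_\sigma$. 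The remaining point is \emph{surjectivity}: here I would use that $G$ is connected, so its generic type is the unique left-translation-invariant generic (its stabilizer is all of $G$). Taking $a$ generic in $T$, this invariance gives $g\cdot a\equiv_{K\l G\r}a$ for every $g\in G(\U)$, so each such $g$ is realized as some $g_\sigma$, and therefore $\G=G$.

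The main obstacle I expect is the interplay between the choice of $K$ in the second step and surjectivity in the last: one must exhibit a $K$ that is ``$G$-closed'' (for condition~(1)) yet admits a nontrivial $G$-torsor (so that $L\supsetneq K$), and then prove the binding group is \emph{all} of $G$ rather than a proper definable subgroup. The first tension is resolved by $\Pi$-type$(G)<m$ and the second by connectedness; making the roles of these two hypotheses precise, exactly as in \cite{Pi}, is the heart of the argument.
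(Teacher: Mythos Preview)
Your outline has the right shape---a $G$-torsor $T$ over a field $K$ with $G(K)=G(\bar K)$, and a point $a\in T$ generating $L$---but it omits precisely the step that constitutes the proof: the explicit construction of $K$, $T$, and $a$. Asserting that $\Pi$-type$(G)<m$ ``makes (i) and (ii) simultaneously achievable'' is not a construction. Pillay's argument (compare the detailed proof of Proposition~\ref{abo} in this paper, which follows the same template) starts from a $\Pi$-closed field $K_0$, embeds $G$ in a group $H$ of full $\Pi$-type (an algebraic group, via the Buium/D-group theory), takes $a$ to be a $\Pi$-generic of $H$ over $K_0$, and sets $K=K_0\l c\r$ with $c$ a canonical parameter for the coset $aG$ (equivalently $c=\ld a$ in the D-group formulation). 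The torsor is then $T=aG$, and the Lascar-rank disparity $RU(a/K_0)\geq\omega^m$ versus $RU(g/K_0)<\omega^m$ for $g\in G$ is what simultaneously forces $G(\bar L)=G(K_0)$ and the surjectivity of the binding-group embedding. None of $K_0$, $H$, or the rank comparison appears in your proposal; these are the content, not the packaging.

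Your surjectivity step has an independent gap. Connectedness gives invariance of the generic type of $T$ under translation by $G(K)$, but the Galois group is $\{g\in G(\U):Zg=Z\}$ for $Z=tp(a/K)^\U$, so you need $a\cdot g\equiv_K a$ for \emph{every} $g\in G(\U)$, not only for $K$-rational $g$. Writing $\equiv_{K\l G\r}$ does not help: over that enormous field $a$ has no reason to remain generic, and in any case the relevant base is $K$. In Pillay's construction this is exactly where the rank argument re-enters: $RU(g/K_0)<\omega^m$ forces $a\cdot g$ to remain $H$-generic over $K_0$, whence $tp(a\cdot g/K_0)=tp(a/K_0)$ and, since $a\cdot g\in aG$, also $tp(a\cdot g/K)=tp(a/K)$. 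For an arbitrary non-trivial $G$-torsor over $K$ there is no reason $T$ should carry a single complete type over $K$, so both the construction of $T$ and the verification $Z=T$ are missing from your sketch. (The condition ``no $K\l G\r$-rational point'' is also not meaningful here and should be dropped.)
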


\

\section{Relative prolongations and relative D-varieties}

In ordinary differential Galois theory, Picard-Vessiot extensions are Galois extensions corresponding to certain differential equations on linear algebraic groups in the constants (i.e. linear ODE's). Pillay's generalized strongly normal extensions correspond to certain differential equations on D-groups. In this section and the next, we develop the ``relative" version of D-varieties and D-groups that will appear in understanding our generalized strongly normal extensions of the previous section as Galois extensions of certain differential equations. 

Fix a $\D$-field $K$ of characteristic zero, with derivations $\D=\{\d_1,\dots,\d_\ell\}$, and $D:K\to K$ a $\D$-derivation (i.e. $D$ is a derivation on $K$ commuting with $\D$). Recall that $\T_\D$ denotes the set of $\D$-derivatives; that is, $$\T_\D=\{\d_\ell^{e_{\ell}}\cdots \d_1^{e_1}:\, e_i\geq 0\}.$$ 

For each  $f\in K\{x\}_\D$ define $\et f \in K\{x,u\}_\D$ by
\begin{displaymath}
\et f (x,u):= \sum_{\t\in \T_{\D},\, i\leq n} \frac{\partial f}{\partial(\t x_i)}(x)\t u_i+f^{D}(x).
\end{displaymath}
Where $x=(x_1,\dots,x_{n})$ and $u=(u_1,\dots,u_{n})$, and the $\D$-polynomial $f^{D}$ is obtained by applying $D$ to the coefficients of $f$. Now extend $\et$ to all $\D$-rational functions over $K$ using the quotient rule; that is, if $f=\frac{g}{h}\in K\l x\r_\D$ then $$\et f(x,u)=\frac{\et g(x,u) \, h(x)-g(x)\,\et h(x,u)}{(h(x))^2}.$$ If $f=(f_1,\dots,f_s)$ is a sequence of $\D$-rational functions over $K$ by $\et f$ we mean $(\et f_1,\dots,\et f_s)$. We sometimes write $\et f_{x}(u)$ instead of $\et f(x,u)$. 

An easy computation shows that if $a$ is a tuple from $K$ then $$Df(a)=\et f(a,Da).$$

\begin{definition}
Let $V$ be an affine $\D$-algebraic variety defined over $K$ and $D$ a $\D$-derivation on $K$. The \emph{relative prolongation of $V$ (w.r.t. $D/\D$)}, denoted by $\tau_{D/\D} V$, is the affine $\D$-algebraic variety defined by
\begin{displaymath}
f(x)=0 \; \text{ and } \; \et f(x,y)=0,
\end{displaymath}
for all $f$ in $\I(V/K)_{\D}$. Here $\I(V/K)_\D$ denotes the $\D$-ideal of $V$ over $K$. If $a$ is a point in $V$, we denote the fibre of $\tau_{D/\D}V$ at $a$ by $\tau_{D/\D}V_a$.
\end{definition}

The notion of relative prolongation of an affine $\D$-algebraic variety was introduced in \cite{Le}, where it was used to formulate geometric first-order axioms for partial differentially closed fields.

\begin{remark}\label{remi} \
\begin{itemize}
\item [(i)] If $V$ is defined over $K^{D}$, the $D$-constants of $K$, then $\tau_{D/\D}V$ is just  $T_{\D} V$, the $\D$-tangent bundle of $V$ introduced by Kolchin in \cite{Ko2}. In general, $\tau_{D/\D}V$ is a $T_{\D} V$-torsor (see \cite{Le}).
\item [(ii)] Suppose the $K$-irreducible components of $V$ are $V_1,\dots,V_s$. If $a\in V_i\setminus \cup_{j\neq i}V_j$ then $\tau_{D/\D}V_a=\tau_{D/\D}(V_i)_a$ (see Proposition 4.1 of \cite{Le}).
\end{itemize}
\end{remark}

\begin{remark}
The relative prolongation can be described in terms of local $\D$-derivations along the lines of Kolchin's definition of the $\D$-tangent bundle. We give a brief explanation. Let $V$ be an affine $\D$-algebraic variety defined over $K$ with coordinate $\D$-ring $$K\{ V \}_{\D}:=K\{x\}_{\D}/\I\,,$$
where $x=(x_1,\dots,x_n)$. Let $a\in V(F)$, where $F$ is a $\D$-extension of $K$. The \emph{local} $\D$\emph{-ring of} $V$ \emph{at} $a$ \emph{over} $K$, $\mathcal{O}_a(V/K)_{\D}$, is just the localization of $K\{V\}_{\D}$ at the prime $\D$-ideal $\P:=\{f\in K\{V\}_{\D}:f(a)=0\}$. Note that $\mathcal{O}_a(V/K)_{\D}$ is a $\D$-ring extension of $K$. A \emph{local} $\D$\emph{-derivation at} $a$ is an additive map $\xi:\mathcal{O}_a(V/K)_\D\to F$ commuting with $\D$ such that $\xi(f g)=\xi(f)g(a)+f(a)\xi(f)$ for all $f,g\in \mathcal{O}_a(V/K)_\D$. If $\xi$ is a local $\D$-derivation at $a$ extending $D$, then $(\xi(\bar x_1),\dots,\xi(\bar x_n))$ is a solution in $F^n$ to the system $\{\et f_a(u)=0: f\in \I(V/K)_{\D}\}$, where $\bar x_i$ is the image in $\mathcal{O}_a(V/K)_\D$ of $x_i+\I$. Conversely, every solution $b$ in $F^n$ to the system $\{\et f_a(u)=0: f\in \I(V/K)_{\D}\}$ gives rise to a local $\D$-derivation at $a$ extending $D$ defined by
\begin{displaymath}
\xi_b (f)=\et f_a(b),
\end{displaymath}
and it satisfies $(\xi_b(\bar x_1),\dots,\xi_b(\bar x_n))=b$. Thus, the set of local $\D$-derivations at $a$ extending $D$ can be identified with the set of solutions in $F$ of the system $\{\et f_a(u)=0: f\in \I(V/K)_{\D}\}$. Hence, the $F$-points of $\tau_{D/\D}V_a$ can be identified with the set of all local $\D$-derivations at $a$ extending $D$. 
\end{remark}

\begin{example}\label{exem}
Let $\D=\{\d\}$, and consider the $\d$-algebraic variety $V\subseteq \mathbb{A}$ defined by 
$$x\neq 0\; \text{ and }\; \d\left(\frac{\d x}{x}\right)=0.$$ 
This is a standard example of a $\d$-algebraic subgroup of $\mathbb{G}_m$, see for example \cite{PM}. We wish to compute $\tau_{D/\d}V$. First note that $V$ is a quasi-affine $\d$-algebraic variety that can be identified with the affine $\d$-algebraic variety $W$ defined by $$f(x,y):=xy-1=0\; \text{ and }\; g(x,y):=x(\d^2 x)-(\d x)^2=0.$$
We get that 
$$d_{D/\d}f(x,y,u,v)=yu+xv \; \text{ and } \; d_{D/\d}g(x,y,u,v)=(\d^2x) u-2\d x \d u + x(\d^2 u).$$ By Rosenfeld's criterion (see \cite{Ko}, Chap. IV, \S 9), the $\d$-ideal generated by $f$ and $g$, denoted by $[f,g]$, is a prime $\d$-ideal of $K\{x,y\}_\d$. Thus, by the differential Nullstellensatz, $\I(W/K)_\d=[f,g]$, and so $\tau_{D/\d}W$ is defined by $f=g=d_{D/\d}f=d_{D/\d}g=0$. Therefore, the relative prolongation of $V$, $\tau_{D/\d}V \subseteq \mathbb{A}^2$, is given by $$x\neq 0, \quad \d\left(\frac{\d x}{x}\right)=0,\; \text { and }\; (\d^2x) u-2\d x \d u + x(\d^2 u)=0.$$
\end{example}

More generally, if $\DD=\{D_1,\dots,D_r\}$ is a set of commuting $\D$-derivations on $K$ then the \emph{relative prolongation of $V$ (w.r.t. $\DD/\D$)} is defined as the fibred product:
\begin{displaymath}
\tau_{\DD/\D} V:=\tau_{D_1/\D}V\times_V \cdots\times_V \tau_{D_r/\D}V.
\end{displaymath}

If $V$ and $W$ are affine $\D$-algebraic varieties and $f:V\to W$ is a regular $\D$-map (all defined over $K$). Then we have an induced regular $\D$-map, $\tau_{\DD/\D} f:\tau_{\DD/\D} V\to \tau_{\DD/\D}W$, given on points by
\begin{displaymath}
(x,u_1,\dots,u_s)\mapsto (f(x),d_{D_1/\D}f(x,u_1),\dots,d_{D_r/\D}f(x,u_r)).
\end{displaymath}

\begin{proposition}
$\tau_{\DD/\D}$ is a functor from the category of affine $\D$-algebraic varieties (defined over $K$) to itself. Moreover, $\tau_{\DD/\D}$ commutes with products; that is, $\tau_{\DD/\D}(V\times W)$ is naturally isomorphic to $\tau_{\DD/\D}V\times \tau_{\DD/\D}W$ via the map $$(x,y,u_1,\dots,u_r,v_1\dots,v_r)\mapsto(x,u_1,\dots,u_r,y,v_1,\dots,v_r).$$
\end{proposition}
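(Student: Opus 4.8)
The plan is to reduce everything to the single-derivation operators $\tau_{D_i/\D}$ and to exploit the description of the fibre $\tau_{D/\D}V_a$ as the set of local $\D$-derivations at $a$ extending $D$ given in the remark above; functoriality then becomes the contravariance of pullback. So fix first a single $\D$-derivation $D$ and a regular $\D$-map $f\colon V\to W$ over $K$. That $\tau_{D/\D}f$ is a regular $\D$-map is routine: writing each component $f_j=g_j/h_j$ with $g_j,h_j\in K\{x\}_\D$, the function $\et f_j$ is the $\D$-rational function produced by the quotient rule, hence regular wherever $h_j\neq 0$, i.e. on the part of $\tau_{D/\D}V$ lying over the domain of $f$. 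The substance is that $\tau_{D/\D}f$ maps $\tau_{D/\D}V$ \emph{into} $\tau_{D/\D}W$, and I would prove this pointwise. For $(a,b)\in\tau_{D/\D}V(F)$ let $\xi_b$ be the corresponding local $\D$-derivation at $a$ extending $D$. The pullback $f^{\ast}\colon\mathcal O_{f(a)}(W/K)_\D\to\mathcal O_a(V/K)_\D$ is a $K$-$\D$-algebra homomorphism, so the composite $\xi_b\circ f^{\ast}$ is additive, commutes with $\D$, restricts to $D$ on $K$, and satisfies $(\xi_b\circ f^{\ast})(gh)=(\xi_b\circ f^{\ast})(g)\,h(f(a))+g(f(a))\,(\xi_b\circ f^{\ast})(h)$; that is, it is a local $\D$-derivation at $f(a)$ extending $D$, hence a point of $\tau_{D/\D}W_{f(a)}$. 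Evaluating it on the coordinates $\bar y_j$ gives $(\xi_b\circ f^{\ast})(\bar y_j)=\xi_b(f_j)=\et f_j(a,b)$, so this point is exactly $\tau_{D/\D}f(a,b)$, as needed. The functor axioms drop out of the same bookkeeping: from $(g\circ f)^{\ast}=f^{\ast}\circ g^{\ast}$ and $\mathrm{id}^{\ast}=\mathrm{id}$ we get $\xi_b\circ(g\circ f)^{\ast}=(\xi_b\circ f^{\ast})\circ g^{\ast}$, which, read back through the coordinate identification, says $\tau_{D/\D}(g\circ f)=\tau_{D/\D}g\circ\tau_{D/\D}f$ and $\tau_{D/\D}(\mathrm{id}_V)=\mathrm{id}$; since these are equalities of regular $\D$-maps holding on all points over every $\D$-extension $F$, they hold as maps. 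Equivalently, this establishes the chain rule $\et(h\circ f)(x,u)=\et h\bigl(f(x),\,\et f(x,u)\bigr)$, which could also be checked by direct computation.

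\textbf{Passage to $\DD$.} By definition $\ta f$ acts by $(x,u_1,\dots,u_r)\mapsto\bigl(f(x),d_{D_1/\D}f(x,u_1),\dots,d_{D_r/\D}f(x,u_r)\bigr)$ on the fibred product $\tau_{D_1/\D}V\times_V\cdots\times_V\tau_{D_r/\D}V$, i.e. its $i$-th prolongation component is $\tau_{D_i/\D}f$ over the common base map $f$. Hence both functoriality statements for $\ta$ follow coordinatewise from the single-derivation case, using only that all factors share the base map $f$ so that the fibred-product structure is preserved.

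\textbf{Commutation with products.} For a single $D$ I would first prove $\tau_{D/\D}(V\times W)\cong\tau_{D/\D}V\times\tau_{D/\D}W$. Fibrewise this is the assertion that a local $\D$-derivation at $(a,c)$ extending $D$ is the same datum as a pair consisting of such a derivation at $a$ on $V$ and one at $c$ on $W$: restriction along the two projection pullbacks yields the pair, and conversely a pair that agrees on $K$ (both equal $D$ there) glues to a unique such derivation, because $\mathcal O_{(a,c)}(V\times W/K)_\D$ is generated over $K$ by the images of $\mathcal O_a(V/K)_\D$ and $\mathcal O_c(W/K)_\D$. Reading this off on coordinates says precisely that $(a,c,u,v)\in\tau_{D/\D}(V\times W)$ iff $(a,u)\in\tau_{D/\D}V$ and $(c,v)\in\tau_{D/\D}W$, so the shuffle $(x,y,u,v)\mapsto(x,u,y,v)$ is an isomorphism. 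For general $\DD$, a point of $\ta(V\times W)$ is a tuple $\bigl((a,c),(p_1,q_1),\dots,(p_r,q_r)\bigr)$ with each $((a,c),(p_i,q_i))\in\tau_{D_i/\D}(V\times W)$, hence with $(a,p_i)\in\tau_{D_i/\D}V$ and $(c,q_i)\in\tau_{D_i/\D}W$ by the single-derivation result; shuffling to $\bigl((a,p_1,\dots,p_r),(c,q_1,\dots,q_r)\bigr)$ lands it in $\ta V\times\ta W$, which is exactly the displayed map. Naturality (compatibility with $\ta(f\times g)$ and $\ta f\times\ta g$) is immediate from comparing the explicit coordinate formulas.

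\textbf{Main obstacle.} The only genuine content is the single-derivation well-definedness and composition, i.e. the chain rule for $\et$; the local-$\D$-derivation description makes this transparent by converting it into the contravariant functoriality of pullback. What remains is purely organizational, namely carrying the single-derivation statements through the fibred-product definition of $\ta$ while keeping track of the coordinate shuffles.
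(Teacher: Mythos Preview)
Your proof is correct and takes a genuinely different route from the paper's. The paper establishes the chain rule $\et(g\circ f)(x,u)=\et g(f(x),\et f(x,u))$ by a direct symbolic computation: it first verifies that $(g\circ f)^D(x)=\et g(f(x),f^D(x))$, then uses the identity $\sum_{\theta,i}\frac{\partial(\sigma f_j)}{\partial\theta x_i}\theta u_i=\sigma\bigl(\sum_{\theta,i}\frac{\partial f_j}{\partial\theta x_i}\theta u_i\bigr)$, and combines these term by term. For the product statement, the paper simply invokes the fact that $\I(V\times W/K)_\D$ is generated by $\I(V/K)_\D$ and $\I(W/K)_\D$.

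You instead exploit the description of the fibre $\tau_{D/\D}V_a$ as local $\D$-derivations extending $D$, so that the chain rule becomes the triviality $\xi_b\circ(g\circ f)^\ast=(\xi_b\circ f^\ast)\circ g^\ast$, and the product statement becomes the observation that a derivation on a tensor product agreeing with $D$ on $K$ is determined by, and determines, its restrictions to the factors. Your approach is more conceptual and avoids the explicit bookkeeping of partial derivatives; it also makes the well-definedness of $\tau_{D/\D}f$ (i.e.\ that it lands in $\tau_{D/\D}W$) transparent, whereas in the paper this is implicit in the chain-rule computation. The paper's approach, on the other hand, is more self-contained and yields the identity at the level of $\D$-rational functions directly, without passing through points over extensions and then arguing that pointwise agreement suffices.
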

\begin{proof}
For ease of notation we assume $\DD=\{D\}$. We need to show that for each pair of regular $\D$-maps $f:V\to W$ and $g:W\to U$ we have that $\tau_{D/\D}(g\comp f)=\tau_{D/\D}g\,\comp\,\tau_{D/\D}f$. By the definition of $\tau_{D/\D}$ it suffices to show that $\et(g\comp f)(x,u)=\et g(f(x),\et f(x,u))$. Write $f=(f_1,\dots,f_s)$. A straightforward but tedious computation shows that 
\begin{displaymath}
(g\comp f)^D(x)=\et g(f(x),f^D(x))=\sum_{\sigma\in \T_\D,\, j\leq s}\frac{\partial g}{\partial \sigma y_j}(f(x))\sigma(f^D_j)+g^D(f(x)).
\end{displaymath}
Also, for each $\sigma\in \T_\D$ and $j\leq s$ we have that
\begin{displaymath}
\sum_{\t\in \T_\D,\, i\leq n}\frac{\partial \sigma f_j}{\partial \t x_i}(x)\t u_i=\sum_{\t\in \T_\D,\, i\leq n} \sigma\left(\frac{\partial f_j}{\partial \t x_i}(x)\,\t u_i\right).
\end{displaymath}
Finally, using these equalities we have that
\begin{eqnarray*}
\et(g\comp f)
&=& \sum_{\sigma\in \T_\D,\, j\leq s}\;\sum_{\t\in \T_\D,\, i\leq n}\frac{\partial g}{\partial \sigma y_j}(f(x))\frac{\partial \sigma f_j}{\partial \t x_i}(x)\t u_i+(g\comp f)^D(x) \\
&=& \sum_{\sigma\in \T_\D,\, j\leq s}\frac{\partial g}{\partial \sigma y_j}(f(x))\left(\sum_{\t\in\T_\D,\,i\leq n}\frac{\partial \sigma f_j}{\partial \t x_i}(x)\t u_i+\sigma(f^D_j)\right)+ g^D(f(x)) \\
&=& \sum_{\sigma\in \T_\D,\, j\leq s}\frac{\partial g}{\partial \sigma y_j}(f(x))\sigma\left(\sum_{\t\in\T_\D,\,i\leq n}\frac{\partial f_j}{\partial \t x_i}(x)\t u_i +f^D_j\right)+g^D(f(x)) \\
\vspace{2in}
&=& \et g(f,\et f).
\end{eqnarray*}

The moreover clause follows from the fact that $\I(V\times W/K)_\D$ equals the ideal generated by $\I(V/K)_\D\subseteq K\{x\}_\D$ and $\I(W/K)_\D\subseteq K\{y\}_\D$ in $K\{x,y\}_\D$.
\end{proof}

Let $\DD=\{D_1,\dots,D_r\}$ be a set of commuting $\D$-derivations on $K$ and $\Pi=\DD\cup\D$. Fix a sufficiently saturated $(\U,\Pi)\models DCF_{0,m}$ such that $K < \U$, and identify all affine $\Pi$-algebraic varieties with their $\U$-points. Let $V$ be an affine $\D$-algebraic variety defined over $K$. Recall that for each $D\in \DD$, $f\in K\{x\}_\D$, and tuple $a$ (from $\U$) we have that $Df(a)=\et f(a,Da)$, and so for each $a\in V$ and $g\in \I(V/K)_\D$ we have $$\et g(a,Da)=Dg(a)=0.$$ Hence, $(a,D_1 a,\dots,D_r a)\in \tau_{\DD/\D}V$ for all $a\in V$. This motivates the following definition.

\begin{definition}
For each affine $\D$-algebraic variety $V$ defined over $K$, we define the map $\nabla_\DD^V$ from $V$ to $\tau_{\DD/\D}V$ by $$\nabla_{\DD}^Vx=(x,D_1 x,\dots,D_r x).$$
When $V$ is understood we simply write $\nabla_\DD$.
\end{definition}

\begin{remark}\label{commu}
If $f:V\to W$ is a regular $\D$-map between affine $\D$-algebraic varieties (all defined over $K$), then $\nabla_\DD^W\comp f=\tau_{\DD/\D}f\comp\nabla_\DD^V$. Indeed, since for each $D\in \DD$ and tuple $c$ we have that $Df(c)=\et f(c,Dc)$, for every $a\in V$  $$\nabla_\DD^W(f(a))=(a,\et f(\nabla_\DD^V a))=\tau_{\DD/\D}f( \nabla_\DD^V a).$$
\end{remark}

Now we introduce the relative version of D-varieties.

\begin{definition}\label{dvar}
Let $\DD=\{D_1,\dots,D_r\}$ be a set of commuting $\D$-derivations on $K$. An affine \emph{relative D-variety (w.r.t. $\DD/\D$)} defined over $K$ is a pair $(V,s)$ where $V$ is an affine $\D$-algebraic variety and $s$ is a $\D$-section of $\tau_{\DD/\D}V\to V$ (both defined over $K$) satisfying the following \emph{integrability condition}, for $i,j=1,\dots r$
\begin{equation}\label{rel1}
d_{D_i/\D}s_j(x, s_i(x))\equiv d_{D_j/\D}s_i(x, s_j(x)) \quad \text{mod} \; \I(V/K)_{\D},
\end{equation}
where $s=(\operatorname{Id},s_1,\dots,s_r)$. By a \emph{relative D-subvariety of} $(V,s)$ we mean a relative D-variety $(W,s_W)$ such that $W$ is a $\D$-subvariety of $V$ and $s_W=s|_W$.
\end{definition}

A morphism between affine relative D-varieties $(V,s)$ and $(W,t)$ is a regular $\D$-map $f:V\to W$ satisfying $\tau_{\DD/\D}f\comp s=t\comp f$. We define the category of affine $\DD/\D$-varieties in the obvious way.

\vspace{.08in}

Now let $(V,s)$ be an affine relative D-variety (w.r.t. $\DD/\D$) defined over $K$, we obtain a $\Pi$-algebraic variety (also defined over $K$) given by
\begin{displaymath}
(V,s)^{\sharp}=\{\, a\in V : \, s(a)=\nabla_{\DD}(a)\}.
\end{displaymath}
A point in $(V,s)^{\sharp}$ is called a \emph{sharp point} of $(V,s)$.

\begin{example}
Let $\DD=\{D\}$ and $\D=\{\d\}$. In example~\ref{exem} we saw that if $V<\mathbb{G}_m$ is the $\d$-algebraic subgroup defined by $x\neq 0$ and $\displaystyle \d\left(\frac{\d x}{x}\right)=0$, then $\tau_{D/\d}V\subseteq \mathbb{A}^2$ is given by
$$x\neq 0, \quad \d\left(\frac{\d x}{x}\right)=0,\; \text { and }\; (\d^2x) u-2\d x \d u + x(\d^2 u)=0.$$
An easy computation shows that if $\al\in K$ is such that $\d^2 \al=0$, then for all $a\in V$ we have that $(a,\al a)\in \tau_{D/\d}V$. Hence, we get a section $s:V\to \tau_{D/\d}V$ defined by $s(x)=(x,\al x)$, and so $(V,s)$ is a relative D-variety w.r.t. $D/\d$ defined over $K$. Suppose we are in the case when $K=F(w)$, where $F$ is a differential closure of $\left(\mathbb{C}(t),\frac{d}{dt}\right)$, and $D|_K=\frac{d}{dw}$ and $\d|_K=\frac{d}{dt}$, where we regard $t$ and $w$ as two complex variables. Let $\al=\frac{t}{w}$, then clearly $\d^2 \al=0$. In this case the sharp points of $(V,s)$ are given by $$(V,s)^\#=\left\{x\in V: \frac{dx}{dw}=\frac{t}{w} \,x\right\}.$$
The equation displayed above is a standard example of a linear differential equation parametrized by $t$. In \cite{PM}, it is shown that if $a\in (V,s)^\#$ then the Galois group of such an equation over $K$ is given by 
$$a^{-1}(V,s)^\#=\left\{x\in V: \frac{dx}{dw}=0\right\}.$$
\end{example}

\begin{proposition}\label{lema1} 
Let $(V,s)$ be an affine relative D-variety over $K$.
\begin{enumerate}  
\item $(V,s)^{\sharp}$ is $\D$-dense in $V$.
\item The $\sharp$ operation establishes a 1:1 correspondence between relative D-subvarieties of $(V,s)$ defined over $K$ and $\Pi$-algebraic subvarieties of $(V,s)^\#$ defined over $K$. The inverse is given by taking $\D$-closures (in the $\D$-Zariski topology) over $K$.
\item Assume $V$ is $K$-irreducible (as a $\D$-algebraic variety). There exists a $\D$-generic point $a$ of $V$ over $K$ such that $a\in (V,s)^{\sharp}$. Moreover, any such $a$ is a $\Pi$-generic point of $(V,s)^\#$ over $K$. In particular, $(V,s)^\#$ will be $K$-irreducible.
\item Assume $V$ is $K$-irreducible (as a $\D$-algebraic variety). Let $\omega_V$ be the Kolchin polynomial of $V$ as a $\D$-algebraic variety and let $\omega_{(V,s)^\#}$ be the Kolchin polynomial of $(V,s)^\#$ as a $\Pi$-algebraic variety. Write $$s=(\operatorname{Id},s_1,\dots,s_r),$$ and let $\mu=ord(s):=max\{ord(s_1),\dots,ord(s_r)\}$ in case $ord(s)\geq 1$, and $\mu=1$ in case $ord(s)=0$. Then for sufficiently large $h\in\NN$
\begin{displaymath}
\omega_{(V,s)^{\sharp}}(h)\leq \omega_{V}(\mu  h). 
\end{displaymath}
In particular, $\D$-type$(V)=\Pi$-type$(V,s)^{\sharp}$. 
\end{enumerate}
\end{proposition}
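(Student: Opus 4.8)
The plan is to prove the four parts in the order stated, since (3) and (4) rest on the construction underlying (1) and on the correspondence in (2). The engine of the whole proposition is the production of a $\D$-generic sharp point, which I would carry out first. To prove (1) I would reduce to the $K$-irreducible case: decomposing $V$ into its $K$-irreducible components $V_1,\dots,V_s$ and invoking Remark~\ref{remi}(ii) (so that $\tau_{\DD/\D}V_a=\tau_{\DD/\D}(V_i)_a$ at a point $a$ lying only on $V_i$), it suffices to produce for each $i$ a $\D$-generic point of $V_i$ that is a sharp point of $(V,s)$. So fix a $\D$-generic point $a$ of an irreducible $V$ over $K$. For each $D_i\in\DD$ the value $s_i(a)$ lies in $\tau_{D_i/\D}V_a$, so by the local-$\D$-derivation description of the relative prolongation there is a $\D$-derivation on $\mathcal O_a(V/K)_\D$ extending $D_i$ with $D_i a=s_i(a)$, which extends uniquely to $K\langle a\rangle_\D$. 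The delicate point is that these extended derivations commute: since each $D_i$ commutes with $\D$, for any $\eta\in\T_\D$ one has $D_iD_j(\eta a)=\eta(D_iD_j a)$, so it is enough to check $D_iD_j a=D_jD_i a$. Here I would compute $D_i(D_j a)=D_i(s_j(a))=d_{D_i/\D}s_j(a,s_i(a))$ and symmetrically, and then the integrability condition \eqref{rel1}, evaluated at $a\in V$, gives exactly the required equality. Thus $K\langle a\rangle_\D$ becomes a $\Pi$-field, and embedding it over $K$ into the saturated $\U$ yields a point (still called $a$) that is $\D$-generic in $V$ and satisfies $\nabla_\DD a=s(a)$, i.e. a $\D$-generic sharp point; its $\D$-closure is $V$, giving $\D$-density and the existence clause of (3).

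The organizing principle behind (2) is that on $(V,s)^\#$ the $\Pi$- and $\D$-Zariski topologies agree. Concretely, for a sharp point every $\Pi$-derivative of the coordinates is a $\D$-rational function of them: $D_i x=s_i(x)$, and iterating (using \eqref{rel1} to handle the mixed $D_iD_j$) every $\theta x$ with $\theta\in\T_\Pi$ is congruent, modulo the equations of $(V,s)^\#$, to a $\D$-rational function of the $\D$-derivatives of $x$. Hence every $\Pi$-closed $C\subseteq(V,s)^\#$ over $K$ has the form $C=(V,s)^\#\cap\bar C^\D$, where $\bar C^\D$ is its $\D$-closure. I would then verify that $W\mapsto(W,s|_W)^\#$ and $C\mapsto\bar C^\D$ are mutually inverse: for a relative D-subvariety $W$, $\D$-density (part (1) applied to $W$) gives $\overline{(W,s|_W)^\#}^\D=W$; conversely, for $\Pi$-closed $C$ the set $W=\bar C^\D$ is a relative D-subvariety, since $s(a)=\nabla_\DD a\in\tau_{\DD/\D}W$ holds on the $\D$-dense set $C$ while the condition ``$s(a)\in\tau_{\DD/\D}W$'' is $\D$-closed, and integrability descends from $\I(V/K)_\D\subseteq\I(W/K)_\D$; the topology-coincidence then gives $(W,s|_W)^\#=C$. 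With (2) in hand the $\Pi$-generic clause of (3) is immediate: a proper $\Pi$-subvariety of $(V,s)^\#$ through a $\D$-generic sharp point $a$ would have proper $\D$-closure $W\subsetneq V$ containing $a$, contradicting $\D$-genericity; so $a$ is $\Pi$-generic and $(V,s)^\#$ is $K$-irreducible.

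For (4) I would fix a $\D$-generic sharp point $a$, so that $\omega_V=\omega^\D_{a/K}$ and, by Lemma~\ref{max} together with (3), $\omega_{(V,s)^\#}=\omega^\Pi_{a/K}$. The inequality reduces to the field containment
$$K(\theta a:\theta\in\T_\Pi(h))\subseteq K(\eta a:\eta\in\T_\D(\mu h)),$$
which I would prove by induction on $h$: applying $\d_k\in\D$ to a $\D$-derivative of order $\le\mu(h-1)$ raises the $\D$-order by at most $1\le\mu$, while applying $D_i\in\DD$ and commuting it past the $\D$-derivatives turns $D_i(\eta a)$ into $\eta\,s_i(a)$, a $\D$-rational function of $\D$-derivatives of $a$ of order $\le\operatorname{ord}(\eta)+\mu\le\mu h$. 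Taking transcendence degrees yields $\omega_{(V,s)^\#}(h)\le\omega_V(\mu h)$. The ``in particular'' then follows: this gives $\Pi\text{-type}(V,s)^\#\le\D\text{-type}(V)$ (a degree is unchanged under scaling the argument by $\mu$), and the trivial containment $\T_\D(h)\subseteq\T_\Pi(h)$ gives the reverse inequality $\D\text{-type}(V)\le\Pi\text{-type}(V,s)^\#$.

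The hard part will be the construction in the first paragraph: arranging that the extended structure is genuinely a $\Pi$-field, which is precisely where the integrability condition \eqref{rel1} must be used to force $[D_i,D_j]=0$. Everything else is bookkeeping built on top of this construction and on the topology-coincidence of the second paragraph.
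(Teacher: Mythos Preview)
Your proposal is correct and follows essentially the same route as the paper. The core construction---extending the $D_i$ to commuting $\D$-derivations on $K\{a\}_\D$ via $D_i a=s_i(a)$ and invoking the integrability condition to force $[D_i,D_j]a=0$---is exactly what the paper does, as is the field-containment argument for~(4). The only notable organizational difference is in~(3): the paper deduces existence of a $\D$-generic sharp point by a compactness argument from the $\D$-density established in~(1), whereas you already have such a point in hand from your construction in~(1) and carry it forward; your framing of~(2) as ``the $\Pi$- and $\D$-topologies coincide on $(V,s)^\#$'' is also slightly more explicit than the paper's, but amounts to the same computation.
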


\begin{proof} \

\noindent (1) Let $F<\U$ be a $\Pi$-closed field extension of $K$ and $O$ be any (non-empty) $\D$-open subset of $V$ defined over $F$. We need to find a tuple $c$ from $F$ such that $c\in (V,s)^{\sharp}\cap O$. Let $W$ be an irreducible $\D$-component of $V$ such that $O\cap W\neq \emptyset$ and let $a$ be a $\D$-generic point of $W$ over $F$. Clearly $a\in O$. 

Now let $f\in \I(a/F)_{\D}=\I(W/F)_{\D}$. Since $a$ is a generic point of $W$ over $F$, we can find $g$ such that $g(a)\neq 0$ and $fg\in \I(V/F)_{\D}$. Thus, for $i=1,\dots,r$,
\begin{eqnarray*}
0
&=&d_{D_i/\D} (fg)_{a}(s_i(a)) \\
&=&d_{D_i/\D} f_{a}(s_i(a))g(a)+f(a)d_{D_i/\D} g_{a}(s_i(a)) \\
&=&d_{D_i/\D} f_{a}(s_i(a))g(a)
\end{eqnarray*}
since $f(a)=0$. But since $g(a)\neq 0$, we get $d_{D_i/\D}f_{a}(s_i(a))=0$ for $i=1,\dots,r$. This implies that there are $\D$-derivations $D_i':F\{a\}_{\D}\to F\{a,s_i(a)\}_{\D}=F\{a\}_{\D}$ extending $D_i:F\to F$ such that $D_i'(a)=s_i(a)$, for $i=1,\dots,r$ (see \cite{Ko2}, Chap. 0, \S4).

Let us check that $D_i'$ and $D_j'$ commute on $F\{a\}_{\D}$; that is, for $f\in F\{x\}_{\D}$, $[D_i',D_j']f(a)=0$. A rather lengthy computation shows that 
\begin{displaymath}
[D_i',D_j']f(a)=d_{[D_i,D_j]/\D}f(a,[D_i',D_j']a).
\end{displaymath}
Because $D_i$ and $D_j$ commute on $F$ we get $f^{[D_i,D_j]}=0$, so we only need to check that $[D_i',D_j']a=0$. Here we use the integrability condition (\ref{rel1}), we have
\begin{eqnarray*}
[D_i',D_j']a
&=& D_i' s_j(a)-D_j' s_i(a)\\
&=&d_{D_i/\D}s_{j}(a, D_i'a)-d_{D_j/\D}s_{i}(a,D_j'a)\\
&=& d_{D_i/\D}s_{j}(a, s_i(a))-d_{D_j/\D}s_{i}(a, s_j(a))=0.
\end{eqnarray*}
Thus, the $\D$-derivations $\DD'=\{D_{1}',\dots,D_r'\}$ commute on $F\{a\}_{\D}$. So, $F\{a\}_{\D}$ together with $\DD'$, is a differential ring extension of $(F,\Pi)$ and it has a tuple, namely $a$, living in $(V,s)^{\sharp}\cap O$. Hence, there is $c$ in $F$ such that $c\in (V,s)^{\sharp}\cap O$ as desired.

\noindent (2) If $(W,s_W)$ is a relative D-subvariety of $(V,s)$ defined over $K$ then, by (1), the $\D$-closure of $(W,s_W)^{\#}$ is just $W$. On the other hand, if $X$ is a $\Pi$-algebraic subvariety of $(V,s)^\#$ defined over $K$ and $W$ is its $\D$-closure over $K$, then $(W,s_W:=s|_W)$ is clearly a relative D-subvariety of $(V,s)$. We need to show $X=(W,s_W)^\#$. Towards a contradiction suppose $X\neq (W,s_W)^\#$. Using the equations of the sharp points $s(x)=\nabla_{\DD}x$, we obtain a $\D$-algebraic variety $U$ defined over $K$ such that $X\subseteq U$ and $(W,s_W)^\# \not\subseteq U$. This would imply that $(W,s_W)^\#\not\subseteq W$, but this is impossible.

\noindent (3) Consider the set of formulas
\begin{displaymath}
\Phi=\{x\in (V,s)^{\sharp}\}\cup\{x\notin W: W  \; \textrm{is a proper} \; \D\textrm{-algebraic subvariety of } V \textrm{ over } K \}
\end{displaymath}
if this set were inconsistent, by compactness and $K$-irreducibility of $V$, we would have that $(V,s)^{\sharp}$ is contained in a proper $\D$-algebraic subvariety of $V$, but this is impossible by part (1). Hence $\Phi$ is consistent and a realisation is the desired point. The rest is clear.

\noindent (4) Let $a$ be a $\D$-generic point of $V$ over $K$ such that $a\in (V,s)^{\sharp}$ (this is possible by part (3)). Then, since $\nabla_{\DD}a=s(a)$, we get that for each $h\in\NN$,
\begin{displaymath}
K(\dd a: \, \dd \in \T_{\Pi}(h))\subseteq K(\dd a: \, \dd\in \T_{\D}(\mu h)).
\end{displaymath} 
\end{proof}

\begin{remark}
So far in this section we have been using a partition $\{\DD,\D\}$ of $\Pi$, i.e. $\Pi=\DD\cup\D$ and $\DD\cap\D=\emptyset$. However, all the definitions and results make sense and hold if we replace such a partition for any two sets $\DD$ and $\D$ of linearly independent elements of the $K$-vector space $\operatorname{span}_{K^\Pi}\Pi$ such that their union $\DD\cup\D$ forms a basis.
\end{remark}

We finish this section by showing that each irreducible $\Pi$-algebraic variety defined over $K$ is $\Pi$-birationally equivalent to the sharp points of a relative D-variety w.r.t. $\DD/\D$ (with an appropriate choice of $\DD$ and $\D$ such that $\DD\cup\D$ is a basis of $\operatorname{span}_{K^\Pi}\Pi$). This is the analogue in several derivations of the well-known characterization of finite rank ordinary differential varieties (see \S 4 of \cite{PiT}).

\begin{proposition}\label{finteo}
Let $W$ be an affine $K$-irreducible $\Pi$-algebraic variety defined over $K$ with $\Pi$-type$(W)=l<|\Pi|$ and $\Pi$-dim$(W)=d$. Then there is a basis $\DD\cup \D$ of the $K^\Pi$-vector space $\operatorname{span}_{K^\Pi}\Pi$ with $|\D|=l$, and an affine relative D-variety $(V,s)$ w.r.t. $\DD/\D$ defined over $K$ with $\D$-type$(V)=l$ and $\D$-dim$(V)=d$, such that $W$ is $\Pi$-birationally equivalent (over $K$) to $(V,s)^{\sharp}$.
\end{proposition}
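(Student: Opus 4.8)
The plan is to extract the data $(\DD,\D,V,s)$ from a $\Pi$-generic point of $W$ using Kolchin's irreducibility theorem (Fact \ref{Kolteo}). First I would fix a $\Pi$-generic point $a$ of $W$ over $K$; by Lemma \ref{max} and the $K$-irreducibility of $W$ its Kolchin polynomial is $\omega_W$, so $\Pi$-type$(a/K)=l$ and $\Pi$-dim$(a/K)=d$. Applying Fact \ref{Kolteo} to $a$ produces a set $\D$ of linearly independent elements of $\operatorname{span}_{K^\Pi}\Pi$ with $|\D|=l$, together with a finite tuple $\al$, such that
\[
K\l a\r_\Pi = K\l \al\r_\D,\qquad \D\text{-type}(\al/K)=l,\qquad \D\text{-dim}(\al/K)=d.
\]
Since $l<|\Pi|$ I can extend $\D$ to a basis $\DD\cup\D$ of $\operatorname{span}_{K^\Pi}\Pi$ with $\DD=\{D_1,\dots,D_r\}$ and $r=|\Pi|-l\geq 1$. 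I then let $V$ be the $\D$-locus of $\al$ over $K$, i.e.\ the $K$-irreducible $\D$-algebraic variety with $\D$-generic point $\al$. Then $\D$-type$(V)=l$ and $\D$-dim$(V)=d$, as required by the statement.

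Next I would build the section. Because $K\l\al\r_\D=K\l a\r_\Pi$ is closed under every derivation in $\Pi$, in particular under each $D_i$, we have $D_i\al\in K\l\al\r_\D$; hence there are $\D$-rational functions $s_i$ over $K$ with $s_i(\al)=D_i\al$. Since $\al\in V$, the identity $Df(c)=\et f(c,Dc)$ applied to the defining equations of $V$ gives $\nabla_\DD\al=(\al,D_1\al,\dots,D_r\al)\in\tau_{\DD/\D}V$, so $s:=(\operatorname{Id},s_1,\dots,s_r)$ sends the generic point of $V$ into $\tau_{\DD/\D}V$. The one point requiring care is that $s$ must be a genuine (regular) $\D$-section and not merely $\D$-rational: the $s_i$ are a priori defined only on the $\D$-open locus where their denominators are nonzero. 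I would therefore pass to the principal $\D$-open subset $D(g)\subseteq V$ containing $\al$ on which all $s_i$ are regular, and realise $D(g)$ as an affine $\D$-algebraic variety in one extra coordinate via $g\cdot z=1$, exactly as in Example \ref{exem}; this leaves the $\D$-field of the generic point, and hence $\D$-type and $\D$-dim, unchanged, and now $s$ is a regular section of $\tau_{\DD/\D}V\to V$. I keep calling the resulting affine model $V$.

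It then remains to verify the integrability condition (\ref{rel1}). As $\al$ is a $\D$-generic point of $V$, the congruence in (\ref{rel1}) modulo $\I(V/K)_\D=\I(\al/K)_\D$ is equivalent to equality at $\al$. Using $s_i(\al)=D_i\al$ and the identity $D\,h(c)=d_{D/\D}h(c,Dc)$ (valid for $\D$-rational $h$ over $K$ and any tuple $c$ on which $D$ acts),
\[
d_{D_i/\D}s_j(\al,s_i(\al))=d_{D_i/\D}s_j(\al,D_i\al)=D_i\bigl(s_j(\al)\bigr)=D_iD_j\al,
\]
and symmetrically the right-hand side of (\ref{rel1}) evaluates to $D_jD_i\al$. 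Since the elements of $\DD\cup\D$ are $K^\Pi$-linear combinations of the commuting derivations in $\Pi$ with constant coefficients, they pairwise commute, so $D_iD_j\al=D_jD_i\al$ and (\ref{rel1}) holds. Thus $(V,s)$ is an affine relative D-variety w.r.t.\ $\DD/\D$ over $K$ with the prescribed $\D$-type and $\D$-dim, and $\al\in(V,s)^\sharp$ by construction.

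Finally, since $\al$ is a $\D$-generic point of $V$ lying in $(V,s)^\sharp$, Proposition \ref{lema1}(3) shows that $\al$ is a $\Pi$-generic point of $(V,s)^\sharp$ and that $(V,s)^\sharp$ is $K$-irreducible. Moreover $\al\in(V,s)^\sharp$ forces $D_i\al=s_i(\al)\in K\l\al\r_\D$, so $K\l\al\r_\D$ is already $\Pi$-closed and $K\l\al\r_\Pi=K\l\al\r_\D=K\l a\r_\Pi$. Hence the $\Pi$-generic points $a$ of $W$ and $\al$ of $(V,s)^\sharp$ generate the same $\Pi$-field over $K$, which is precisely a $\Pi$-birational equivalence over $K$ between $W$ and $(V,s)^\sharp$. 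I expect the genuine obstacle to be the regular-versus-rational section issue of the second paragraph; everything else reduces to Fact \ref{Kolteo} and the bookkeeping identity $Df(c)=\et f(c,Dc)$.
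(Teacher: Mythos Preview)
Your proposal is correct and follows essentially the same route as the paper: apply Fact \ref{Kolteo} to a $\Pi$-generic point of $W$, extend $\D$ to a basis, and use that $D_i\al\in K\l\al\r_\D$ to produce the section, invoking Proposition \ref{lema1}(3) at the end. The paper handles the rational-versus-regular issue by replacing $\al$ with the tuple $b=(\al,1/g_1(\al),\dots,1/g_r(\al))$ so that the $s_i$ become $\D$-polynomials---which is exactly your localization $g\cdot z=1$ in slightly different packaging---and it leaves the integrability condition implicit (absorbed in ``$b$ is $\D$-generic, so $(V,s)$ is a relative D-variety''), whereas you spell it out via $D_iD_j\al=D_jD_i\al$.
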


\begin{proof}
Let $a$ be a $\Pi$-generic point of $W$ over $K$. By Fact \ref{Kolteo}, we can find a set $\D$ of linearly independent elements of $\operatorname{span}_{K^\Pi}\Pi$ such that $K\la a\ra_{\Pi}=K\la\al\ra_{\D}$ for some tuple $\al$ of $\U$ with $\D$-tp$(\al/K)=l$ and $\D$-dim$(\al/K)=d$. Let $\DD=\{D_1,\dots,D_r\}$ be any set of derivations such that $\DD\cup\D$ is a basis of $\operatorname{span}_{K^\Pi}\Pi$.

For each $i=1,\dots,r$, we have
\begin{displaymath}
D_i \alpha=\frac{f_i(\al)}{g_i(\al)}
\end{displaymath}
where $\frac{f_i}{g_i}$ is a sequence of $\D$-rational functions over $K$. Let
\begin{displaymath}
b:= \left(\al,\frac{1}{g_{1}(\al)},\dots,\frac{1}{g_r(\al)}\right).
\end{displaymath}
Note that $b$ and $\al$ are $\D$-interdefinable over $K$, hence $\D$-type$(b/K)=\D$-type$(\al/K)$ and $\D$-dim$(b/K)=\D$-dim$(\al/K)$. Also, $a$ and $b$ are $\Pi$-interdefinable over $K$ and hence it suffices to show that $b$ is a $\Pi$-generic point of the set of sharp points of a relative D-variety w.r.t. $\DD/\D$ defined over $K$.

Let $V$ be the $\D$-locus of $b$ over $K$ (then $\D$-type$(V)=r$ and $\D$-dim$(V)=d$). A standard trick gives us a sequence $s=(\operatorname{Id},s_1,\dots,s_r)$ of $\D$-polynomials over $K$ such that $s(b)=\nabla_{\DD}b\in \tau_{\DD/\D}V$. Since $b$ is a $\D$-generic point of $V$ over $K$, we get that $(V,s)$ is a relative D-variety (w.r.t. $\DD/\D$) defined over $K$. Also, $b\in(V,s)^{\sharp}$ then, by Proposition \ref{lema1} (3), $b$ is a $\Pi$-generic point of $(V,s)^{\sharp}$ over $K$ as desired.
\end{proof}

The proof of Proposition \ref{finteo} actually proves:

\begin{corollary}\label{ong}
Let $W$ be an affine $K$-irreducible $\Pi$-algebraic variety defined over $K$ and $\D$ a set of linearly independent elements of $\operatorname{span}_{K^\Pi}\Pi$ such that $\D$ bounds the $\Pi$-type$(W)$ and $|\D|<|\Pi|$. If we extend $\D$ to a basis $\DD\cup\D$ of $\operatorname{span}_{K^\Pi}\Pi$, then $W$ is $\Pi$-birationally equivalent (over $K$) to $(V,s)^\#$ for some affine relative D-variety $(V,s)$ w.r.t. $\DD/\D$ defined over $K$.
\end{corollary}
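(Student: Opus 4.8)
The plan is to reprise the proof of Proposition~\ref{finteo} essentially verbatim, observing that the hypothesis ``$\D$ witnesses the $\Pi$-type'' was used there only to obtain the extra dimension-matching conclusions ($\D$-type$(V)=l$ and $\D$-dim$(V)=d$), and that the weaker hypothesis ``$\D$ bounds the $\Pi$-type$(W)$'' already suffices to build a relative D-variety whose sharp points are $\Pi$-birationally equivalent to $W$. The one point to check is that the construction of the $\D$-generators goes through with a \emph{prescribed} bounding set $\D$ in place of the one produced by Fact~\ref{Kolteo}.

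Concretely, I would let $a$ be a $\Pi$-generic point of $W$ over $K$. Since $\D$ bounds the $\Pi$-type$(W)$, it bounds the $\Pi$-type$(a/K)$, so $K\la a\ra_{\Pi}$ is finitely $\D$-generated over $K$; fix a finite tuple $\al$ with $K\la a\ra_{\Pi}=K\la\al\ra_{\D}$. This is precisely the role played by Fact~\ref{Kolteo} in Proposition~\ref{finteo}, the only difference being that we no longer control $|\D|$ beyond $|\D|<|\Pi|$, which guarantees that the complementary set $\DD=(\DD\cup\D)\setminus\D$ is nonempty. Writing $\DD=\{D_1,\dots,D_r\}$, each $D_i\al$ lies in $K\la\al\ra_{\D}$, so $D_i\al=f_i(\al)/g_i(\al)$ for a sequence $f_i/g_i$ of $\D$-rational functions over $K$, and I set $b:=(\al,1/g_1(\al),\dots,1/g_r(\al))$.

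As in Proposition~\ref{finteo}, $b$ is $\D$-interdefinable with $\al$ over $K$ and $\Pi$-interdefinable with $a$ over $K$, so it suffices to realise $b$ as a $\Pi$-generic point of the sharp points of a relative D-variety w.r.t. $\DD/\D$ over $K$. Taking $V$ to be the $\D$-locus of $b$ over $K$, the inclusion of the coordinates $1/g_i(\al)$ in $b$ clears all denominators arising when one $\D$-differentiates $D_i\al$, so that $\nabla_\DD b$ has all coordinates in $K\{b\}_\D$; this is the ``standard trick'' yielding a sequence $s=(\operatorname{Id},s_1,\dots,s_r)$ of $\D$-polynomials over $K$ with $s(b)=\nabla_\DD b\in\tau_{\DD/\D}V$. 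Since $b$ is $\D$-generic in $V$, the $\D$-closed condition $s(x)\in\tau_{\DD/\D}V$ holds on all of $V$, so $s$ is a $\D$-section, and the integrability condition~(\ref{rel1}) likewise holds on $V$ because it holds at $b$: there $d_{D_i/\D}s_j(b,s_i(b))=D_i(D_j b)$ and $d_{D_j/\D}s_i(b,s_j(b))=D_j(D_i b)$, which agree since the $D_i$ commute on $\U$. Thus $(V,s)$ is a relative D-variety over $K$ with $b\in(V,s)^\#$, and by Proposition~\ref{lema1}(3) $b$ is $\Pi$-generic in $(V,s)^\#$; the $\Pi$-interdefinability of $a$ and $b$ over $K$ then gives the desired $\Pi$-birational equivalence.

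I expect no genuine obstacle here: the whole task is to confirm that the Proposition~\ref{finteo} argument nowhere secretly used $|\D|=\Pi\text{-type}(W)$, and indeed the cardinality entered only through the discarded dimension statements. The single substantive step is the first one, passing from ``$\D$ bounds the $\Pi$-type$(W)$'' to finite $\D$-generation of $K\la a\ra_{\Pi}$ at the generic point, which is immediate from the definition of bounding applied to the generic $a\in W$.
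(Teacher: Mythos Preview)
Your proposal is correct and takes essentially the same approach as the paper, which simply notes that the proof of Proposition~\ref{finteo} already establishes this corollary. You have accurately identified that Fact~\ref{Kolteo} was used there only to produce a $\D$ with the additional dimension-matching properties, and that once $\D$ is given as a bounding set the remainder of the argument goes through verbatim.
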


\begin{remark}
While the definitions and results of this section where stated for affine $\D$-algebraic varieties (for the sake of concreteness), all of them make sense and hold for abstract $\D$-algebraic varieties. One only has to check that all the above constructions patch correctly at intersecting charts.
\end{remark}

\

\section{Relative D-groups}

In this section we develop the basic theory of relative D-groups. We prove the fact that every definable group is the set of sharp points of a relative D-group. We continue to work in our universal domain $(\U,\Pi)$ with a partition $\Pi=\DD\cup\D$, $\DD=\{D_1,\dots,D_r\}$, and over a base $\Pi$-field $K<\U$.

Let $\D'\subseteq \D$ and suposse $V$ is a $\D'$-algebraic variety defined over $K$. It is not known (at least to the author) if $\tau_{\DD/\D}V=\tau_{\DD/\D'}V$. However, this equality holds in the case of $\D'$-algebraic groups:

\begin{proposition}\label{prog}
Let $\D'\subseteq \D$ and let $G$ be a $\D'$-algebraic group defined over $K$. Then $\tau_{\DD/\D} G=\tau_{\DD/\D'}G$.
\end{proposition}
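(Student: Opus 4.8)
The plan is to reduce to a single derivation, dispatch the easy inclusion, and then isolate the one genuinely group-theoretic point.

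\emph{Reduction and the easy inclusion.} Since $\tau_{\DD/\D}G$ and $\tau_{\DD/\D'}G$ are the fibred products over $G$ of the $\tau_{D_i/\D}G$ and the $\tau_{D_i/\D'}G$ respectively, it suffices to prove $\tau_{D/\D}G=\tau_{D/\D'}G$ for a single $\D$-derivation $D$. For the inclusion $\tau_{D/\D}G\subseteq\tau_{D/\D'}G$ the group structure is irrelevant: if $f\in K\{x\}_{\D'}$ then $\partial f/\partial(\t x_i)=0$ for $\t\notin\T_{\D'}$ and $f^{D}\in K\{x\}_{\D'}$, so by inspection of its defining formula $\et f$ coincides with its $\D'$-analogue $d_{D/\D'}f$. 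As $\I(G/K)_{\D'}\subseteq\I(G/K)_\D$, every equation defining $\tau_{D/\D'}G$ is among those defining $\tau_{D/\D}G$, and the inclusion follows.

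\emph{Reducing the reverse inclusion to an ideal identity.} I work on $\U$-points. Let $(a,c)\in\tau_{D/\D'}G$, so $a\in G$ and $\et g(a,c)=d_{D/\D'}g(a,c)=0$ for every $g\in\I(G/K)_{\D'}$; the goal is $\et f(a,c)=0$ for every $f\in\I(G/K)_\D$. The operator $\et$ is additive, satisfies the Leibniz rule $\et(hf)=\et h\cdot f(x)+h(x)\cdot\et f$, and a direct computation gives the commutation identity $\et(\d f)=\widetilde\d(\et f)$ in $K\{x,u\}_\D$, where $\widetilde\d$ is the lift of $\d\in\D$ differentiating both $x$ and $u$. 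Since $x\mapsto a,\ u\mapsto c$ is a $\D$-homomorphism into $\U$, evaluating these identities and using $f(a)=0$ for $f\in\I(G/K)_\D$ shows that
\[
J:=\{\,f\in\I(G/K)_\D:\ \et f(a,c)=0\,\}
\]
is a $\D$-ideal. As $J\supseteq\I(G/K)_{\D'}$, it contains the $\D$-ideal $[\I(G/K)_{\D'}]_\D$ that they generate. Hence the reverse inclusion reduces to the identity $[\I(G/K)_{\D'}]_\D=\I(G/K)_\D$; that is, to showing that the $\D$-ideal generated by the $\D'$-ideal of $G$ is already radical.

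\emph{The main obstacle: radicality via smoothness and homogeneity.} This is the only step that uses that $G$ is a group — for a general $\D'$-variety the generated ideal need not be radical (the missing saturation and coherence conditions are precisely why equality is left open in the preceding remark). I would argue as follows. Translation by a point of $G$ is a $\D'$-isomorphism and, by functoriality of the prolongations, intertwines the two constructions; since radicality is a local condition and (passing to the connected component) $G$ is covered by translates of any nonempty $\D'$-open set, it suffices to verify radicality on one well-chosen chart. Take a characteristic set $\{A_1,\dots,A_k\}$ of the prime $\D'$-ideal $\I(G/K)_{\D'}$ and let $H$ be the product of its initials and separants, a $\D'$-polynomial with $H\notin\I(G/K)_{\D'}$, so that on $\{H\neq0\}$ the saturation is trivial. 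Regarding $\{A_i\}$ in $K\{x\}_\D$ creates no new overlaps, because applying a derivation from $\D\setminus\D'$ to one leader can never produce a $\T_{\D'}$-derivative of another; thus the set stays coherent and autoreduced over $\D$, and by Rosenfeld's criterion $[A_1,\dots,A_k]_\D:H^\infty$ is a prime $\D$-ideal with this characteristic set and zero locus $G$. On $\{H\neq0\}$ this prime ideal coincides with $[\I(G/K)_{\D'}]_\D$, so the latter is radical there, and by homogeneity radical everywhere. Everything except this displayed ideal identity is formal differential algebra; the radicality $[\I(G/K)_{\D'}]_\D=\I(G/K)_\D$ is what I expect to be the crux and the single place where the group hypothesis (equivalently, smoothness in characteristic zero together with homogeneity) is indispensable.
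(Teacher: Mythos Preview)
Your proof is correct and follows essentially the same approach as the paper: reduce to a single $D$, pass to the connected component, invoke the fact that a characteristic set of $\I(G/K)_{\D'}$ remains one for $\I(G/K)_\D$ (the paper cites Freitag for this; you sketch it via Rosenfeld's criterion), and use translation/homogeneity to pass from the locus $\{H\neq 0\}$ to all of $G$. The only organizational difference is that the paper argues fibrewise---proving $\tau_{D/\D}G_a=\tau_{D/\D'}G_a$ first at a $\D$-generic $a$ (where $H(a)\neq 0$) and then translating to an arbitrary point---whereas you package the same computation as the ideal identity $[\I(G/K)_{\D'}]_\D=\I(G/K)_\D$; the fibrewise version is slightly cleaner since translation acts directly on fibres, but the content is the same.
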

\begin{proof}
For ease of notation we assume $\DD=\{D\}$. By (ii) of Remark~\ref{remi} we may assume that $G$ is a connected $\D'$-algebraic group. Let $\L$ be a characteristic set of the prime $\D'$-ideal $\I(G/K)_{\D'}$ (we refer the reader to (\cite{Ko}, Chap. I, \S 10) for the definition of characteristic set). By Chapter 7 of \cite{Fre}, $G$ is also a connected $\D$-algebraic group and $\L$ is a characteristic set of $\I(G/K)_\D$. Let $a$ be a $\D$-generic point of $G$ over $K$. We claim that $\tau_{D/\D}G_a=\tau_{D/\D'}G_a$. Let $b\in \tau_{D/\D'}G_a$, then $\et f_a(b)=d_{D/\D'}f_a(b)=0$ for all $f\in \L$. It is easy to see now that $\et f_a(b)=0$ for all $f\in [\L]_\D$, where $[\L]_\D$ denotes the $\D$-ideal generated by $\L$ in $K\{x\}_\D$. Now let $g\in\I(G/K)_\D$, since $\L$ is a characteristic set of $\I(G/K)_\D$ we can find $\ell$ such that $H_\L^\ell\, g\in [\L]_\D$ , where $H_\L$ is the product of initials and separants of the elements of $\L$ (see Chap. IV, \S 9 of \cite{Ko}). Thus we have
\begin{eqnarray*}
0&=& \et \big(H_\L^\ell g\big)_a(b) \ \ \ \ \ \ \ \ \ \text{ as $H_\Lambda^\ell g\in[\L]_\D$}\\
&=& H_\L^\ell(a)\et g_a(b)+g(a)\et (H_\L^\ell)_a(b)\\
&=& H_\L^\ell(a)\et g_a(b).
\end{eqnarray*}
Since $a$ is a $\D$-generic point of $G$ over $K$, $H_\L(a)\neq 0$, and so $\et g_a(b)=0$. Hence, $\tau_{D/\D'}G_a\subseteq\tau_{D/\D}G_a$. The other containment is clear.

Now we show that $\tau_{D/\D}G_g=\tau_{D/\D'}G_g$ for all $g\in G$. Let $g\in G$, we can find $h\in G$ such that $g=ha$. Let $\lambda^h:G\to G$ denote left multiplication by $h$, then $\tau_{D/\D}\lambda^h(\tau_{D/\D} G_a)=\tau_{D/\D}G_g$ and $\tau_{D/\D'}\lambda^h(\tau_{D/\D'} G_a)=\tau_{D/\D'}G_g$. But $\tau_{D/\D}\lambda^h=\tau_{D/\D'}\lambda^h$ as $G$ is a $\D'$-algebraic group, and so, since $\tau_{D/\D}G_a=\tau_{D/\D'}G_a$, we get $\tau_{D/\D}G_g=\tau_{D/\D'}G_g$.
\end{proof}

Let $G$ be a $\D$-algebraic group defined over $K$. Then $\tau_{\DD/\D}G$ has naturally the structure of a $\D$-algebraic group defined over $K$; more precisely, if $p:G\times G\to G$ is the group operation on $G$, then $\tau_{\DD/\D}p:\tau_{\DD/\D}G\times \tau_{\DD/\D}G\to \ta G$ is a group operation on $\ta G$. Here we are identifying $\ta(G\times G)$ with $\ta G\times\ta G$ (via the natural isomorphism given in the previous section).

\begin{remark}
By Remark \ref{commu} we have $$\ta p\comp \nabla_\DD^{G\times G}=\nabla_\DD^G\comp p.$$ Hence, the section $\nabla_\DD^G:G\to\ta G$ is a group homomorphism.
\end{remark}

Let us give some useful explicit formulas for the group law of $\ta G$. For each $f\in K\{x\}_\D$ let $$d_\D f_x u:=\sum_{\t\in\T_\D, i\leq n}\frac{\partial f}{\partial \t x_i}(x)\,\t u_i,$$ and for a tuple $f=(f_1,\dots,f_s)$ let $d_\D f_x u$ be $(d_\D(f_1)_x u,\dots,d_\D(f_s)_x u)$. 

If $\lambda^g$ and $\rho^g$ denote left and right multiplication by $g\in G$, respectively, and $p$ is the group operation on $G$, then for $(g,u_1,\dots,u_r)$ and $(h,v_1,\dots,v_r)$ in $\ta G$ we have $$(g,u_1,\dots,u_r)\cdot(h,v_1,\dots,v_r)=(g\cdot h,\,d_\D (\lambda^g)_h v_i+d_\D(\rho^h)_g u_i+p^{D_i}(g,h):i\leq r).$$ 
The inverse is given by 
\begin{displaymath}
(g,u_1,\dots,u_r)^{-1}=(g^{-1},d_\D(\lambda^{g^{-1}}\comp\rho^{g^{-1}})_g(D_i g-u_i)+D_i (g^{-1}):i\leq r).
\end{displaymath}
It is clear, from these formulas, that $\ta G_e$ is a normal $\D$-subgroup of $\ta G$.

\

The following relativization of D-groups is obtained by considering the group objects in the category of relative D-varieties (w.r.t. $\DD/\D$).

\begin{definition}
A \emph{relative  D-group (w.r.t. $\DD/\D$)} defined over $K$ is a relative D-variety $(G,s)$ such that $G$ is a $\D$-algebraic group and $s:G\to \ta G$ is a group homomorphism (all defined over $K$). A \emph{relative D-subgroup} of $(G,s)$ is a relative D-subvariety $(H,s_H)$ of $(G,s)$ such that $H$ is a subgroup of $G$.
\end{definition}

\begin{remark}  \
\begin{enumerate}
\item If $(G,s)$ is a relative D-group then $(G,s)^\#$ is a $\Pi$-subgroup of $G$. Indeed, if $g$ and $h\in (G,s)^\#$ then $$s(g\cdot h^{-1})=s(g)\cdot s(h)^{-1}=(\nabla_\DD g)\cdot (\nabla_\DD h)^{-1}=\nabla_\DD(g\cdot h^{-1}).$$ Hence, $g\cdot h^{-1}\in (G,s)^\#$.
\item Suppose $G$ is a $\D$-algebraic group and $(G,s)$ is a relative D-variety. If $(G,s)^\#$ is a subgroup of $G$ then $(G,s)$ is a relative D-group. Indeed, since $\nabla_\DD^G$ is a group homomorphism, the restriction of $s$ to $(G,s)^\#$ is a group homomorphism, and so, since $(G,s)^\#$ is $\D$-dense in $G$ and $s$ is a regular $\D$-map, $s$ is a group homomorphism on all of $G$.
\end{enumerate}
\end{remark}

The correspondence of Proposition \ref{lema1}~(2) specializes to a natural correspondence between relative D-subgroups of a relative D-group and $\Pi$-subgroups of its set of sharp points:

\begin{lemma}\label{corre}
Let $(G,s)$ be a relative D-group defined over $K$ such that $(G,s)^\#(\bar K)=(G,s)^\#(K)$, for some $\Pi$-closure $\bar K$ of $K$. Then the $\sharp$ operator establishes a 1:1 correspondence between relative D-subgroups of $(G,s)$ defined over $K$ and $\Pi$-algebraic subgroups of $(G,s)^\#$ defined over $K$. The inverse is given by taking $\D$-closures (in the $\D$-Zariski topology) over $K$.
\end{lemma}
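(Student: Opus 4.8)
The plan is to upgrade the variety-level correspondence of Proposition~\ref{lema1}(2) to the group setting by checking that the bijection provided there restricts to a bijection between subgroups. Since Proposition~\ref{lema1}(2) already supplies the two mutually inverse maps $\#$ and ``$\D$-closure over $K$'' between relative D-subvarieties of $(G,s)$ over $K$ and $\Pi$-algebraic subvarieties of $(G,s)^\#$ over $K$, it remains only to verify that each map sends subgroups to subgroups; mutual inverseness on subgroups is then inherited from Proposition~\ref{lema1}(2). For the forward direction, if $(H,s_H)$ is a relative D-subgroup then $H$ is a $\D$-subgroup of $G$ and $s_H=s|_H$ is a homomorphism, so $(H,s_H)$ is itself a relative D-group and hence $(H,s_H)^\#$ is a $\Pi$-subgroup of $G$ by part (1) of the remark following the definition of relative D-group.

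The crux is the backward direction: given a $\Pi$-algebraic subgroup $X$ of $(G,s)^\#$ over $K$, I must show that its $\D$-closure $H:=\overline{X}^{\D,K}$ is a $\D$-algebraic subgroup of $G$. Once this is known, part (2) of the remark following the definition of relative D-group (applied with $(H,s|_H)^\#=X$, which is a subgroup by Proposition~\ref{lema1}(2)) shows that $(H,s|_H)$ is a relative D-subgroup. The difficulty is the usual one in the definable category: the classical ``closure of a subgroup is a subgroup'' argument wants to translate $H$ by arbitrary $g\in X$, but $\lambda^g$ is only defined over $K$ when $g$ is a $K$-point. This is exactly where the hypothesis $(G,s)^\#(\bar K)=(G,s)^\#(K)$ enters. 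Indeed, since $\bar K\models DCF_{0,m}$ is an elementary substructure of $\U$ and $X$ is $\Pi$-definable over $K\leq \bar K$, every $K$-definable relatively $\D$-open subset of $X$ meets $X(\bar K)$; hence $X(\bar K)$ is $\D$-dense in $H$. Because $X\subseteq (G,s)^\#$, the hypothesis forces $X(\bar K)=X(K)$, so the $K$-rational points $X(K)$ are already $\D$-dense in $H$.

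With $\D$-density of $X(K)$ in hand, I would run the classical argument using only $K$-rational translations. For $g\in X(K)$ the map $\lambda^g$ is a $\D$-automorphism of $G$ defined over $K$ with $\lambda^g(X(K))=X(K)$, so $\lambda^g(H)=\overline{\lambda^g(X(K))}^{\D,K}=\overline{X(K)}^{\D,K}=H$. The set $S:=\{g\in G:\,gH\subseteq H\}$ is $\D$-closed (it is an intersection over $h\in H$ of the $\D$-closed conditions $\{g:\,gh\in H\}$) and $\operatorname{Aut}_\Pi(\U/K)$-invariant since $H$ is defined over $K$, hence defined over $K$; it contains the $\D$-dense set $X(K)$, so $H\subseteq S$, i.e.\ $HH\subseteq H$. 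The inverse map $\iota$ is defined over $K$ and fixes $X(K)$ setwise, so $\iota(H)=\overline{X(K)}^{\D,K}=H$; together with $e\in X\subseteq H$ this shows $H$ is a $\D$-subgroup of $G$, completing the backward direction.

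The main obstacle is precisely securing the $\D$-density of the $K$-rational points of $X$ in $H$: this is the only place the argument is not a direct transcription of the algebraic-group proof, and it is exactly what the hypothesis $(G,s)^\#(\bar K)=(G,s)^\#(K)$ is designed to provide. Everything else---the closedness and $K$-definability of $S$, and the verification that $\iota$ and the $\lambda^g$ behave as expected---is routine bookkeeping in the $\D$-Zariski topology, and the final assembly of the bijection reduces to citing Proposition~\ref{lema1}(2).
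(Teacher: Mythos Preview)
Your proposal is correct and follows essentially the same route as the paper: both reduce to Proposition~\ref{lema1}(2) and then verify that the $\D$-closure over $K$ of a $\Pi$-algebraic subgroup $X$ of $(G,s)^\#$ is a $\D$-subgroup of $G$, using the hypothesis $(G,s)^\#(\bar K)=(G,s)^\#(K)$ to reduce the classical closure-of-a-subgroup argument to translations by $K$-rational points. The only cosmetic difference is packaging---the paper defines the stabilizer-type set $\{a\in\bar H:\,b\cdot a\in\bar H\text{ and }b\cdot a^{-1}\in\bar H\text{ for all }b\in\bar H\}$ and shows it equals $\bar H$ via an auxiliary set $Y_a$, whereas you phrase it via $S=\{g:\,gH\subseteq H\}$ and the explicit $\D$-density of $X(K)$ in $H$---but the content is the same.
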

\begin{proof}
This is an immediate consequence of Proposition~\ref{lema1}~(2) and the discussion above, except for the fact that the $\D$-closure over $K$ of a $\Pi$-algebraic subgroup of $(G,s)^\#$ is a subgroup of $G$. Let us prove this. Let $H$ be a $\Pi$-algebraic subgroup of $(G,s)^\#$ and $\bar H$ its $\D$-closure over $K$. Let $$X:=\{a\in \bar H:\, b\cdot a\in \bar H \text{ and }b\cdot a^{-1}\in \bar H \text{ for all } b\in \bar H\}\subseteq \bar H ,$$ then $X$ is a $\D$-algebraic subgroup of $G$ defined over $K$. We claim that $\bar H=X$. By the definition of $\bar H$, it suffices to show that $H(\bar K)\subseteq X(\bar K)$. Let $a\in H(\bar K)=H(K)$, and consider $Y_a=\{x\in \bar H:\, x\cdot a\in \bar H \text{ and }x\cdot a^{-1}\in \bar H\}$. Then $H\subset Y_a$, but $Y_a$ is $\D$-closed and defined over $K$, so $\bar H\subset Y_a$. Thus $a\in X$, as desired.
\end{proof}

We end this section by showing that every definable group $G$ of differential type less than the number of derivations is, after possibly replacing $\Pi$ by some independent linear combination, definably isomorphic to a relative D-group (w.r.t. $\DD/\D$) with $|\D|=\Pi$-type$(G)$. By Remark~\ref{witg} we can find a set $\D$ of linearly independent elements of $\operatorname{span}_{K^\Pi}\Pi$ which witnesses the $\Pi$-type of $G$, and hence it suffices to prove the following.

\begin{theorem}\label{co} 
Let $G$ be a connected $K$-definable group. Suppose $\D$ is a set of linearly independent elements of $\operatorname{span}_{K^\Pi}\Pi$ such that $\D$ bounds the $\Pi$-type of $G$ and $|\D|<|\Pi|$. If we extend $\D$ to a basis $\DD\cup\D$ of $\operatorname{span}_{K^\Pi}\Pi$, then $G$ is $K$-definably isomorphic to $(H,s)^{\sharp}$ for some relative D-group $(H,s)$ w.r.t. $\DD/\D$ defined over $K$.
\end{theorem}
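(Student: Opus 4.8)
The plan is to reduce the statement about the definable group $G$ to the purely geometric Corollary~\ref{ong}, which already produces a relative D-variety whose sharp points are $\Pi$-birational to any $K$-irreducible $\Pi$-algebraic variety bounded in $\Pi$-type by $\D$. First I would replace $G$, up to $K$-definable isomorphism, by a $\Pi$-algebraic group: since $DCF_{0,m}$ eliminates quantifiers and imaginaries, the connected $K$-definable group $G$ is $K$-definably isomorphic to a $\Pi$-algebraic group, which I may take to be affine and $K$-irreducible (connectedness of $G$ gives irreducibility of the corresponding $\Pi$-variety $W$). Thus without loss of generality $G=W$ is an affine $K$-irreducible $\Pi$-algebraic group, and $\D$ bounds its $\Pi$-type with $|\D|<|\Pi|$.

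Next I would apply Corollary~\ref{ong} to the $\Pi$-variety $W$ underlying $G$: after extending $\D$ to a basis $\DD\cup\D$ of $\operatorname{span}_{K^\Pi}\Pi$, there is an affine relative D-variety $(H,s)$ w.r.t. $\DD/\D$, defined over $K$, together with a $\Pi$-birational equivalence $\phi:W\dashrightarrow (H,s)^\sharp$ over $K$. The point is that $(H,s)^\sharp$ now carries, via $\phi$, a $\Pi$-rational group structure inherited from $G$. I would then invoke Weil's group chunk theorem (in its differential-algebraic incarnation, valid in the $\omega$-stable theory $DCF_{0,m}$) to upgrade this $\Pi$-birational group law into an honest $\D$-algebraic group structure on $H$ itself (possibly after shrinking $H$ or passing to a $\D$-birationally equivalent model of the group). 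Concretely, the group operation on $G$ transports to a $\D$-rational map on $H$ that is defined on a $\D$-dense open set and is associative there, and the group chunk theorem furnishes a $\D$-algebraic group $H'$, $\D$-birational to $H$, into which this germ extends as a genuine group operation.

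Having arranged that $H$ is a $\D$-algebraic group with $(H,s)^\sharp$ a $\Pi$-algebraic group $K$-definably isomorphic (via $\phi$) to $G$, the remaining task is to check that the section $s$ is compatible with this group structure, i.e. that $(H,s)$ is a relative D-group and not merely a relative D-variety. Here I would use the second item of the Remark following the definition of relative D-group: since $(H,s)^\sharp$ is (isomorphic to) the subgroup $G$, it is in particular a subgroup of $H$, and because $\nabla_\DD^H$ is a group homomorphism while $(H,s)^\sharp$ is $\D$-dense in $H$ and $s$ is a regular $\D$-map, the section $s$ must be a group homomorphism on all of $H$. This is exactly the criterion that promotes $(H,s)$ to a relative D-group, and then $(H,s)^\sharp\cong G$ as required.

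The main obstacle I expect is the transfer of the group structure in the middle step: Corollary~\ref{ong} only gives a $\Pi$-\emph{birational} equivalence, so I must argue carefully that the group operation of $G$ descends to a $\D$-rational (not merely $\Pi$-rational) group chunk on $H$ and that the group chunk theorem applies in the relative $\DD/\D$ setting while keeping the section $s$ intact under the $\D$-birational modification of $H$. Controlling the interaction between the $\D$-birational change of model and the integrability condition~(\ref{rel1}) satisfied by $s$—so that the resulting pair remains a relative D-variety over $K$—is the delicate technical heart of the argument; everything else is formal once the group object has been constructed in the category of relative D-varieties.
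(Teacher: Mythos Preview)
Your outline matches the paper's proof: reduce to a $\Pi$-algebraic group, apply Corollary~\ref{ong}, then use Hrushovski's group chunk theorem to promote the relative D-variety to a relative D-group. The two obstacles you isolate in your last paragraph are genuine, and here is how the paper dispatches them.

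The descent of the group law to a $\D$-rational (not merely $\Pi$-rational) map is precisely where the hypothesis that $\D$ bounds the $\Pi$-type of $G$ enters. If $g,h$ are $\Pi$-independent generics of $G$ and $\alpha:G\dashrightarrow(V,s)^\#$ is the $\Pi$-birational map from Corollary~\ref{ong}, then
\[
K\langle g,h\rangle_\Pi \;=\; K\langle\alpha(g),\alpha(h)\rangle_\Pi \;=\; K\langle\alpha(g),\alpha(h)\rangle_\D,
\]
the second equality because $\alpha(g),\alpha(h)$ are sharp points, so every $\DD$-derivative of them is already $\D$-rational in them via $s$. Hence $\alpha(g\cdot h)$ lies in this $\D$-field, yielding a $\D$-rational $\rho$ with $\alpha(g\cdot h)=\rho(\alpha(g),\alpha(h))$. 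Hrushovski's theorem, applied in the language of $\D$-rings to the $\D$-generic type $q$ of $V$ together with $\rho$, now produces a $\D$-algebraic group $H$ and a $\D$-birational injection $\beta:V\dashrightarrow H$.

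For the section, do not attempt to preserve the original $s$ under the $\D$-birational modification---that is the wrong bookkeeping. Instead transport it by functoriality: set $t:=\tau_{\DD/\D}\beta\circ s\circ\beta^{-1}$, a partially defined $\D$-map $H\to\tau_{\DD/\D}H$. For a $\Pi$-generic $g$ of $G$ one has $\alpha(g)\in(V,s)^\#$, so by Remark~\ref{commu}
\[
t(\beta\alpha(g))=\tau_{\DD/\D}\beta\bigl(\nabla_\DD^V\alpha(g)\bigr)=\nabla_\DD^H\bigl(\beta\alpha(g)\bigr).
\]
Thus $t$ agrees with the group homomorphism $\nabla_\DD^H$ on generics of $H$, hence is multiplicative there, and extends by the usual generic-to-global argument to a group homomorphism $\bar t:H\to\tau_{\DD/\D}H$. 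Then $(H,\bar t)$ is a relative D-group and the induced embedding $G\hookrightarrow H$ lands exactly on $(H,\bar t)^\#$. This sidesteps entirely any need to track the integrability condition~(\ref{rel1}) through a birational change of model.
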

\begin{proof}
Recall that any $K$-definable group is $K$-definably isomorphic to a $\Pi$-algebraic group defined over $K$ (to the author's knowledge the proof of this fact, for the partial case, does not appear anywhere; however, it is well known that the proof for the ordinary case \cite{Pi4} extends with little modification). Thus, we assume that $G$ is a $\Pi$-algebraic group defined over $K$. By Corollary \ref{ong}, there is a $\Pi$-rational map $\al$ and a relative D-variety $(V,s)$ w.r.t. $\DD/\D$, both defined over $K$, such that $\al$ yields a $\Pi$-birational equivalence between $G$ and $(V,s)^\#$. Let $p$ be the $\Pi$-generic type of $G$ over $K$ and $q$ be the $\D$-generic type of $V$ over $K$. 

We first show that there is a generically defined $\D$-group structure on $q$. Note that $\al$ maps realisations of $p$ to elements of $(V,s)^\#$ realising $q$. Let $g$ and $h$ be $\Pi$-independent realisations of $p$ then $$K\l g,h\r_\Pi=K\l \al(g),\al(h)\r_\Pi=K\l\al(g),\al(h)\r_\D,$$ and so $\al(g\cdot h)\in K\l \al(g),\al(h)\r_\D$. Thus, we can find a $\D$-rational map $\rho$ defined over $K$ such that $\al(g\cdot h)=\rho(\al(g),\al(h))$. Hence, for any $g,h$ $\Pi$-independent realisations of $p$ $$\al(g\cdot h)=\rho(\al(g),\al(h)),$$ and so $\rho(\al(g),\al(h))$ realises $q$. Now, since $\al(g)$ and $\al(h)$ are $\D$-independent realisations of $q$, we get that for any $x,y$ $\D$-independent realisations of $q$, $\rho(x,y)$ realises $q$. Moreover, if $g,h,l$ are $\Pi$-independent realisations of $p$ then one easily checks that $$\rho(\al(g),\rho(\al(h),\al(l)))=\rho(\rho(\al(g),\al(h)),\al(l)),$$ but $\al(g)$, $\al(h)$ and $\al(l)$ are $\D$-independent realisations of $q$, so for any $x,y,z$ $\D$-independent realisations of $q$ we get $\rho(x,\rho(y,z))=\rho(\rho(x,y),z)$. 

We thus have a stationary type $q$ (in the language of $\D$-rings) and a $\D$-rational map $\rho$ satisfying the conditions of Hrushovski's theorem on groups given generically (see \cite{Hru} or \cite{Po}), and so there is a connected $K$-definable group $H$ and a $K$-definable injection $\beta$ (both in the language of $\D$-rings) such that $\beta$ maps the realisations of $q$ onto the realisations of the generic type of $H$ over $K$ and $\beta(\rho(x,y))=\beta(x)\cdot\beta(y)$ for all $x,y$ $\D$-independent realisations of $q$. We assume, without loss of generality, that $H$ is a $\D$-algebraic group defined over $K$. We have a (partial) definable map $\gamma:=\beta\comp \al:G\to H$ such that if $g,h$ are $\Pi$-independent realisations of $p$ then $\gamma(g\cdot h)=\gamma(g)\cdot \gamma(h)$. It follows that there is a $K$-definable group embedding $\bar \gamma:G\to H$ extending $\gamma$. Indeed, let $$U:=\{x\in G:\gamma(x y)=\gamma(x)\gamma(y)\text{ and } \gamma(yx)=\gamma(y)\gamma(x) \text{ for all }y\models p \text{ with } x\ind_K y\},$$ then $U$ is a $K$-definable subset of $G$ (by definability of types in $DCF_{0,m}$). If $g\models p$ then $g\in U$, and so every element of $G$ is a product of elements of $U$ (see \S 7.2 of \cite{Ma2}). Let $g\in G$ and let $u,v\in U$ be such that $g=u\cdot v$, then $\bar \gamma$ is defined by $$\bar \gamma(g)=\gamma(u)\cdot \gamma(v).$$ 
It is well known that this construction yields a group embedding (see for example \S 3 of Marker's survey \cite{Ma}). We also have a (partial) definable map $t:=\ta \beta\comp s\comp \beta^{-1}:H\to \ta H$ such that for every $g\models p$, as $\al(g)\in (V,s)^\#$, we have $t(\gamma (g))=\nabla_\DD^H(\gamma(g))$. Thus, for $g,h$ $\Pi$-independent realisations of $p$ $$t(\gamma(g)\cdot \gamma(h))=\nabla_\DD^H(\gamma(g)\cdot \gamma(h))=(\nabla_\DD^H\gamma(g))\cdot(\nabla_\DD^H\gamma(h))=t(\gamma(g))\cdot t(\gamma(h)).$$ Thus, for $x,y$ $\D$-independent realisations of the generic type of $H$ over $K$ we have $t(x\cdot y)=t(x)\cdot t(y)$, and so we can extend $t$ to a $K$-definable group homomorphism $\bar t:H\to \ta H$. Therefore, $(H,\bar t)$  is a relative D-group w.r.t. $\DD/\D$. Clearly, if $g$ is a $\Pi$-generic of $G$ over $K$, then $\gamma(g)$ is a $\Pi$-generic (over $K$) of both $\bar{\gamma}(G)$ and $(H,\bar t)^\#$. Hence, $\bar{\gamma}(G)=(H,\bar t)^\#$, as desired.
\end{proof}

\section{Relative logarithmic differential equations}\label{galex}

In this section we introduce the logarithmic derivative on a relative D-group and the Galois extensions associated to logarithmic differential equations. We then show, under some mild assumptions, that these are precisely the generalized strongly normal extensions of Section 1. The theory we develop here extends Pillay's theory from \cite{Pi2}.

We still continue to work in our universal domain $(\U,\Pi)\models DCF_{0,m}$, a base $\Pi$-field $K < \U$ and a partition $\Pi=\DD\cup \D$ with $\DD=\{D_1,\dots,D_r\}$.  

\begin{definition}
Let $(G,s)$ be a relative D-group defined over $K$ and $e$ be its identity. We say that $\al\in \ta G_e$ is an \emph{integrable point of $(G,s)$} if $(G,\al s)$ is a relative D-variety (but not necessarily a relative D-group). Here $\al s:G\to \ta G$ is the $\D$-section given by $(\al s)(x)=\al\cdot s(x)$, where the latter product occurs in $\ta G$. Clearly $\nabla_\DD e\in \ta G_e$ is an integrable point.
\end{definition}

\begin{example}\label{exa}
Suppose $G$ is a linear $\D$-algebraic group defined over $K^{\DD}$ (that is, a $\D$-algebraic subgroup of GL$_n$ for some $n$). Then, by Remark \ref{remi}~(i), $\ta G$ is equal to $(T_\D G)^r$, the $r$-th fibred product of the $\D$-tangent bundle of $G$, and so there is a zero section $s_0:G\to (T_\D G)^r$. Then $(G,s_0)$ is a relative D-group (w.r.t. $\DD/\D$). In this case a point $(\operatorname{Id},A_1,\dots,A_r)\in \ta G_{\operatorname{Id}}=(\mathcal{L}_\D(G))^{r}$, where $\mathcal{L}_\D(G)$ is the $\D$-Lie algebra of $G$, is integrable if and only if it satisfies
\begin{displaymath}
D_iA_j-D_jA_i=[A_i,A_j]
\end{displaymath}
for $i,j=1,\dots,r$. These are the integrability conditions on $A_1,\dots,A_r$ that one finds in \cite{PM} or \cite{VS}.
\end{example}

\begin{lemma}\label{intpt}
A point $\al\in \ta G_e$ is integrable if and only if the system of $\Pi$-equations
\begin{displaymath}
\nabla_{\DD}x=\al s(x)
\end{displaymath}
has a solution in $G$.
\end{lemma}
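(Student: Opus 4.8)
The plan is to observe that the solution set of the system $\nabla_{\DD}x=\al s(x)$ is by definition nothing but the set of sharp points $(G,\al s)^{\sharp}$, and then to transport density of sharp points between the genuine relative D-group $(G,s)$ and the candidate relative D-variety $(G,\al s)$ by left translation. The forward direction is then immediate: if $\al$ is integrable, $(G,\al s)$ is a relative D-variety, so Proposition \ref{lema1}(1) gives that $(G,\al s)^{\sharp}=\{a\in G:(\al s)(a)=\nabla_\DD a\}$ is $\D$-dense in $G$, in particular nonempty, which is exactly a solution of the system.

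For the converse, suppose $a\in G$ satisfies $\nabla_\DD a=\al s(a)$; I must verify the integrability condition (\ref{rel1}) for the section $\al s$. First, $\al s$ is genuinely a $\D$-section: the bundle projection $\ta G\to G$ is a group homomorphism sending $\al\in\ta G_e$ to $e$ and $s(x)$ to $x$, so $\al\cdot s(x)$ projects to $x$. The decisive point is that $(G,\al s)^{\sharp}$ is a torsor under the sharp-point group $(G,s)^{\sharp}$, and therefore $\D$-dense. Indeed, for $h\in(G,s)^{\sharp}$, using that $\nabla_\DD$ and $s$ are group homomorphisms and that $h$ is a sharp point of $(G,s)$,
\begin{align*}
\nabla_\DD(a\cdot h)&=\nabla_\DD(a)\cdot\nabla_\DD(h)=(\al s(a))\cdot s(h)\\
&=\al\cdot s(a)\cdot s(h)=\al\cdot s(a\cdot h)=(\al s)(a\cdot h),
\end{align*}
so $a\cdot h\in(G,\al s)^{\sharp}$. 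Since $(G,s)$ is a relative D-group, hence a relative D-variety, Proposition \ref{lema1}(1) shows $(G,s)^{\sharp}$ is $\D$-dense in $G$; as left translation by $a$ is a $\D$-isomorphism of $G$, the translate $a\cdot(G,s)^{\sharp}$ is $\D$-dense in $G$ as well.

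It remains to propagate the integrability identity from this dense set to a congruence modulo $\I(G/K)_\D$. Writing $\al s=(\operatorname{Id},t_1,\dots,t_r)$ and setting $\Theta_{ij}(x):=d_{D_i/\D}t_j(x,t_i(x))-d_{D_j/\D}t_i(x,t_j(x))$, the integrability condition (\ref{rel1}) for $\al s$ is exactly $\Theta_{ij}\in\I(G/K)_\D$ for all $i,j$. For any solution $g$ of $\nabla_\DD x=\al s(x)$ we have $D_ig=t_i(g)$, and the identity $Df(g)=d_{D/\D}f(g,Dg)$, applied with $D=D_i$, $f=t_j$, yields
\begin{displaymath}
[D_i,D_j]g=D_i(t_j(g))-D_j(t_i(g))=d_{D_i/\D}t_j(g,t_i(g))-d_{D_j/\D}t_i(g,t_j(g))=\Theta_{ij}(g).
\end{displaymath}
Since the derivations $D_i,D_j$ commute, $[D_i,D_j]g=0$, so $\Theta_{ij}$ vanishes at every point of $(G,\al s)^{\sharp}$, in particular on the $\D$-dense set $a\cdot(G,s)^{\sharp}$. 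A $\D$-regular function vanishing on a $\D$-dense subset of $G$ lies in $\I(G/K)_\D$, giving $\Theta_{ij}\equiv 0\pmod{\I(G/K)_\D}$ and hence integrability.

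The main obstacle is precisely this final propagation: a single solution $a$ need not be $\D$-generic, so vanishing of the integrability defect at $a$ alone does not yield the required congruence. The device that overcomes it is the torsor structure of $(G,\al s)^{\sharp}$ over $(G,s)^{\sharp}$, whose density (guaranteed by Proposition \ref{lema1}(1) for the honest relative D-group $(G,s)$) is inherited by any nonempty translate. The only routine bookkeeping I would still check is that $\Theta_{ij}$ is $\D$-regular on the relevant affine chart, so that ``vanishing on a $\D$-dense set implies membership in $\I(G/K)_\D$'' is legitimate, and that the homomorphism identities for $\nabla_\DD$ and $s$ are applied under the correct identification of $\ta(G\times G)$ with $\ta G\times\ta G$.
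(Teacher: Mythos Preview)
Your proof is correct and follows essentially the same route as the paper's: both directions rest on the identity $(G,\al s)^{\sharp}=a\cdot(G,s)^{\sharp}$ and Proposition~\ref{lema1}(1). The paper's version is terser, simply asserting that $\D$-density of $(G,\al s)^{\sharp}$ suffices for integrability; your explicit verification via the defect $\Theta_{ij}$ and the commutator $[D_i,D_j]g=0$ is exactly the computation that justifies this implicit step.
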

\begin{proof}
If $\al$ is integrable then, by  Proposition \ref{lema1} (1), $\{g\in G: \nabla_{\DD}g=\al s(g)\}$ is $\D$-dense in $G$. Conversely, suppose there is $g\in G$ such that $\nabla_{\DD}g=\al s(g)$. To prove $\al$ is integrable it suffices to show that $(G,\al s)^{\sharp}$ is $\D$-dense in $G$. This follows from $(G,\al s)^{\sharp}=g(G,s)^{\sharp}$ and the fact that $(G,s)^{\sharp}$ is $\D$-dense in $G$.
\end{proof}

Let $(G,s)$ be a relative D-group defined over $K$. The \emph{logarithmic derivative associated to} $(G,s)$ is defined by
\begin{center}
$\ld:\, G \to \ta G_e$

\qquad \qquad \qquad $g\mapsto (\nabla_{\DD}g)\cdot \, (s(g))^{-1}$
\end{center}
where the product and inverse occur in $\ta G$.

We now list some properties of $\ld$.
\begin{lemma}\label{propder}
\
\begin{enumerate}
\item $\ld$ is a crossed-homomorphism, i.e. $\ld(gh)=(\ld g)(g*\ld h$). Here $*$ is the adjoint action of $G$ on $\ta G_e$, that is, $g*\al:=\ta C_g (\al)$ for each $\al\in \ta G_e$ where $C_g$ denotes conjugation by $g$.
\item The kernel of $\ld$ is $(G,s)^\#$.
\item The image of $\ld$ is exactly the set of integrable points of $(G,s)$.
\item For all $a\in G$, $tp(a/K\, \ld a)$ is $(G,s)^\#$-internal and $a$ is a fundamental system of solutions.
\end{enumerate}
\end{lemma}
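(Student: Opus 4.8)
My plan is to prove the four items essentially in the order stated, since each is a natural consequence of the group-theoretic structure of $\ld$ developed via the homomorphism $\nabla_\DD^G$ and the section $s$. For item (1), I would compute $\ld(gh)$ directly from the definition $\ld(x)=(\nabla_\DD x)\cdot(s(x))^{-1}$, using that both $\nabla_\DD^G$ (by the Remark following Proposition \ref{prog}) and $s$ (by definition of relative D-group) are group homomorphisms into $\ta G$. Thus $\ld(gh)=(\nabla_\DD g)(\nabla_\DD h)\,(s(h))^{-1}(s(g))^{-1}$. I would insert $(s(g))^{-1}s(g)$ to rewrite this as $(\nabla_\DD g)(s(g))^{-1}\cdot\big[s(g)(\nabla_\DD h)(s(h))^{-1}(s(g))^{-1}\big]=(\ld g)\cdot\big(s(g)*\ld h\big)$, and then observe that since $\ld h\in\ta G_e$ and $\ta G_e$ is normal in $\ta G$ (noted explicitly after the inverse formula in Section 4), conjugation by $s(g)$ agrees with conjugation by the $G$-component of $s(g)$, namely $g$; so $s(g)*\ld h=g*\ld h$, giving the crossed-homomorphism identity.

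For items (2) and (3) the work is already largely done. The kernel of $\ld$ is $\{g:\nabla_\DD g=s(g)\}$, which is by definition exactly $(G,s)^\#$, so (2) is immediate. For (3), the image of $\ld$ is $\{\al\in\ta G_e:\al=(\nabla_\DD g)(s(g))^{-1}\text{ for some }g\in G\}$, i.e. $\{\al:\nabla_\DD g=\al s(g)\text{ has a solution in }G\}$; by Lemma \ref{intpt} this is precisely the set of integrable points, so (3) follows directly from that lemma.

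The substantive item is (4). I would first fix $a\in G$, set $\al:=\ld a$, and note $a$ is a solution of $\nabla_\DD x=\al s(x)$, so by Lemma \ref{intpt} (and its proof) the full solution set is $(G,\al s)^\#=a\,(G,s)^\#$, a coset defined over $K\l\al\r$. The natural strategy for internality is to exhibit, for an arbitrary solution $b$, a way to recover $b$ from $a$, the parameters $K\l\al\r$, and a tuple from $(G,s)^\#$: indeed any such $b$ lies in $a\,(G,s)^\#$, so $b=a\cdot c$ with $c=a^{-1}b\in(G,s)^\#$, placing $b\in\operatorname{dcl}(K\l\al\r,a,c)$ with $c$ from $(G,s)^\#$. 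To match the definition of $(G,s)^\#$-internality (Lemma \ref{onint}), I would take the base to be $K\l\al\r=K\l\ld a\r$ and need the witnessing field $L$ over which $a$ is defined with $a$ independent of the new solutions; here the point is that $a$ itself generates the solution coset, so $a$ serves simultaneously as the parameter making every solution $(G,s)^\#$-definable and as the fundamental system of solutions. I expect the main obstacle to be the bookkeeping required to verify the precise internality clause of Lemma \ref{onint}—specifically arranging the independence $\ind_{K\l\al\r}$ and checking that every realization of $tp(a/K\l\al\r)$ is itself a solution of $\nabla_\DD x=\al s(x)$ (so that it lies in the coset $a(G,s)^\#$ and hence in $\operatorname{dcl}(K\l\al\r,a,c)$ for $c\in(G,s)^\#$), rather than any genuinely hard algebra. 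Once that is in place, the statement that $a$ is a fundamental system of solutions is just the assertion that every solution is definable over $K\l\al\r$, $a$, and a tuple from $(G,s)^\#$, which the coset description supplies.
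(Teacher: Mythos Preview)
Your proposal is correct and follows essentially the same route as the paper: (1) is the same direct computation (the paper phrases the key step as ``$g*\al=u\al u^{-1}$ for any $u\in\ta G_g$'', which is exactly your observation that conjugation by $s(g)\in\ta G_g$ gives the adjoint action by $g$), (2) and (3) are handled identically, and (4) proceeds via the coset description $\ld^{-1}(\ld a)=a(G,s)^\#$ just as you propose.

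One remark on (4): you anticipate more difficulty than there is. The independence condition from the general definition of internality is not needed here, because you are directly verifying the \emph{fundamental system} condition (as stated in the proof of Proposition~\ref{inter}): for every $b\models tp(a/K\l\ld a\r)$ there is $c$ from $(G,s)^\#$ with $b\in\operatorname{dcl}(K\l\ld a\r,a,c)$. And the check that such a $b$ satisfies $\ld b=\ld a$ is immediate, since $\ld$ is $K$-definable and $\ld a\in K\l\ld a\r$, so the formula $\ld x=\ld a$ is in the type. The paper's proof of (4) is correspondingly a two-line argument.
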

\begin{proof} \

\noindent(1) This can be shown as in Pillay \cite{Pi2}. We include a sketch of the proof. An easy computation shows that $g*\al=u\al u^{-1}$ for any $u\in\ta G_g$. Thus, the adjoint action of $G$ on $\ta G_e$ is by automorphisms and
\begin{eqnarray*}
\ld (gh)
&=& (\nabla_{\DD}g)(\nabla_{\DD}h)(s(h))^{-1}(s(g))^{-1} \\
&=& (\ld g) (s(g))(\ld h)(s(g))^{-1} =(\ld g)(g*\ld h).
\end{eqnarray*}
\noindent(2) By definition of $\ld$.

\noindent(3) Follows from Lemma \ref{intpt}, since if $\al=\ld(g)$ for some $g\in G$ then $\nabla_\DD g=\al\, s(g)$.

\noindent(4) By basic properties of crossed-homomorphisms $\ld^{-1}(\ld a)=a\, ker(\ld)=a(G,s)^\#$ for all $a\in G$. Thus, if $b\models tp(a/K\,\ld a)$ then $\ld b=\ld a$, and so $b\in \ld^{-1}(\ld a)= a(G,s)^\#$.
\end{proof}

Extending the work of Pillay in \cite{Pi2}, we point out that these relative logarithmic differential equations give rise to generalized strongly normal extensions.

Let $(G,s)$ be a relative D-group defined over $K$ with $(G,s)^\#(K)=(G,s)^\#(\bar K)$, for some (equivalently any) $\Pi$-closure $\bar K$ of $K$, and $\al$ be an integrable $K$-point of $(G,s)$. By Lemma \ref{propder}~(3), the set of solutions in $G$ to $\ld x=\al$ is nonempty. Hence there is a maximal $\Pi$-ideal $\mathcal M\subset K\{x\}_\Pi$ containing $\{\ld x-\al\}\cup \I(G/K)$. It is a well known fact that every maximal $\Pi$-ideal of $K\{x\}_\Pi$ is a prime ideal (see for example \cite{Ka}). Let $a$ be a tuple of $\U$ such that $\mathcal{M}=\I(a/K)_{\Pi}$. Note that $tp(a/K)$ is therefore isolated (by the formula which sets the radical differential generators of $\I(a/K)_{\Pi}$ to zero) and so $K\l a\r_{\Pi}$ is contained in a $\Pi$-closure of $K$. Moreover, Lemma \ref{propder}~(4) tells us that, $tp(a/K)$ is $(G,s)^\#$-internal and $a$ is a fundamental system of solutions. Hence, by Proposition \ref{inter}, $K\l a\r_{\Pi}$ is a $(G,s)^\#$-strongly normal extension of $K$. We call this the \emph{Galois extension associated to $\ld x=\al$}. The Galois group associated to this generalized strongly normal extension is called the \emph{Galois group associated to $\ld x=\al$}.

Let us point out that the above construction does not depend on the choice of $a$ (up to isomorphism over $K$). Indeed, if $b$ is another solution such that $\I(b/K)_{\Pi}$ is a maximal $\Pi$-ideal, then both $tp(a/K)$ and $tp(b/K)$ are isolated and so we can find a $\Pi$-closure $\bar K$ of $K$ containing $b$ and an embedding $\phi:K\{a\}_{\Pi}\to \bar K$ over $K$. Since $\al$ is a $K$-point, $\ld \phi(a)=\al$, thus, by Lemma \ref{propder}~(1), $b^{-1}\phi(a)\in (G,s)^{\sharp}(\bar K)=(G,s)^{\sharp}(K)$. Hence, $\phi(a)$ and $b$ are interdefinable over $K$ and so $K\l a\r_{\Pi}$ is isomorphic to $K\l b\r_{\Pi}$ over $K$. This argument actually shows that there is exactly one such extension in each $\Pi$-closure of $K$.

\begin{remark}[On the condition $(G,s)^\#(K)=(G,s)^\#(\bar K)$] \
\begin{itemize}
\item [(i)] Let $G$ be a $\D$-group defined over $K^\DD$ and $s_0:G\to \ta G=(T_\D G)^r$ its zero section. If $K^\DD$ is $\D$-closed then $(G,s_0)^\#(K)=(G,s_0)^\#(\bar K)$. On the other hand, if $(G,s_0)^\#(K)=(G,s_0)^\#(\bar K)$ and $\D$-type$(G)=|\D|$ then $K^\DD$ is $\D$-closed.
\item [(ii)] Let $(G,s)$ be a relative D-group defined over $K$. If $(G,s)^\#$ is the Galois group of a generalized strongly normal extension of $K$ then $(G,s)^\#(K)=(G,s)^\#(\bar K)$ (see (6) of Theorem~\ref{group}).
\end{itemize}
\end{remark}

\begin{example}[{\it The linear case}] Suppose $G=GL_n$ and $s_0:G\to \ta G=(T_\D G)^r$ is the zero section. By Proposition \ref{prog}, $T_\D G=TG$ the (algebraic) tangent bundle of $G$, and so $\ta G_{\operatorname {Id}}=\{\operatorname{Id}\}\times (Mat_n)^r$. If $\al=(\operatorname{Id},A_1,\dots,A_r)\in \ta G_{\operatorname {Id}}$, then the logarithmic differential equation $\ell_{s_0} x=\al$ reduces to the system of linear differential equations $$D_1 x=A_1 x,\;\dots\;, D_r x=A_r x.$$ As we already pointed out in Example \ref{exa}, $\al$ will an integrable point if and only if $$D_iA_j-D_jA_i=[A_i,A_j]\; \text{ for } i,j=1,\dots,r.$$ Also, in this case, $(G,s_0)^\#(K)=(G,s_0)^\#(\bar K)$ if and only if $K^\DD$ is $\D$-closed. Thus, the Galois extensions of $K$ associated to logarithmic differential equations of $(GL_n,s_0)$ are precisely the parametrized Picard-Vessiot extensions considered by Cassidy and Singer in \cite{PM}.
\end{example}

\begin{proposition}\label{char}
Let $(G,s)$ be a relative D-group defined over $K$ with $(G,s)^{\sharp}(K)=(G,s)^{\sharp}(\bar K)$ and $\al$ be an integrable $K$-point of $(G,s)$. Let $L$ be a $\Pi$-field extension of $K$ generated by a solution to $\ld x=\al$. Then, $L$ is the Galois extension associated to $\ld x=\al$ if and only if $(G,s)^{\sharp}(K)=(G,s)^{\sharp}(\bar L)$ for some (any) $\Pi$-closure $\bar L$ of $L$.
\end{proposition}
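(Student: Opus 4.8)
The plan is to reduce everything to the two defining conditions of $(G,s)^\#$-strong normality (Definition~\ref{def}) and to exploit that, for a field generated by a solution of $\ld x=\al$, condition~(2) comes for free. Set $X:=(G,s)^\#$ and write $L=K\l b\r$ with $\ld b=\al$. First I would observe that $tp(b/K)$ is $X$-internal with $b$ itself a fundamental system of solutions: this is exactly Lemma~\ref{propder}~(4) together with the fact that $\al$ is a $K$-point, so that $tp(b/K\,\ld b)=tp(b/K)$. By Proposition~\ref{inter}, $L$ therefore satisfies condition~(2) of Definition~\ref{def} automatically. Consequently $L$ is an $X$-strongly normal extension of $K$ \emph{if and only if} condition~(1) holds, i.e. $(G,s)^\#(K)=(G,s)^\#(\bar L)$, which is precisely the right-hand side of the claimed equivalence. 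This already settles the forward direction: the Galois extension associated to $\ld x=\al$ is, by its construction, a $(G,s)^\#$-strongly normal extension of $K$, so condition~(1) holds and $(G,s)^\#(K)=(G,s)^\#(\bar L)$.

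The substantive content is the converse. Assuming $(G,s)^\#(K)=(G,s)^\#(\bar L)$, the previous paragraph shows that $L$ is $X$-strongly normal, so by Remark~\ref{rem}~(ii) we may fix a $\Pi$-closure $\bar K$ of $K$ with $L\subseteq\bar K$. Inside this same $\bar K$ there is, by the construction of the Galois extension and its uniqueness in each $\Pi$-closure of $K$, a solution $a\in\bar K$ of $\ld x=\al$ with $K\l a\r$ the Galois extension; the goal is to show $K\l b\r=K\l a\r$. Since $\ld a=\ld b=\al$, the crossed-homomorphism formalism of Lemma~\ref{propder}---concretely the coset description $\ld^{-1}(\al)=a\,(G,s)^\#$ coming from $\ker\ld=(G,s)^\#$---gives $a^{-1}b\in(G,s)^\#$; as $a,b\in\bar K$ and $G$ is defined over $K$, in fact $a^{-1}b\in(G,s)^\#(\bar K)$. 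Invoking the standing hypothesis on $(G,s)$, namely $(G,s)^\#(\bar K)=(G,s)^\#(K)$, we conclude $a^{-1}b\in K$, whence $b\in K\l a\r$ and $a\in K\l b\r$. Therefore $L=K\l b\r=K\l a\r$ is the Galois extension, as desired.

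The main obstacle is the converse, and within it the decisive point is conceptual rather than computational: recognizing that condition~(2) of strong normality is automatic for any field generated by a solution, so that the sharp-point equality over $\bar L$ is \emph{exactly} condition~(1), and then passing from the abstract statement ``$L$ is $X$-strongly normal'' to the concrete ``$L$ equals the constructed Galois extension''. That passage stitches together three earlier facts---containment of $L$ in a $\Pi$-closure (Remark~\ref{rem}~(ii)), uniqueness of the Galois extension inside a given $\Pi$-closure, and the coset description of the fibres of $\ld$---with the standing assumption $(G,s)^\#(\bar K)=(G,s)^\#(K)$, which is precisely what lets us descend $a^{-1}b$ from $\bar K$ into $K$ and thereby identify the two extensions.
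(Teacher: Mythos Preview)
Your proof is correct and follows essentially the same approach as the paper: both directions hinge on the observation that any $L=K\l b\r$ with $\ld b=\al$ automatically satisfies condition~(2) of Definition~\ref{def} via Lemma~\ref{propder}~(4) and Proposition~\ref{inter}, so the equivalence reduces to condition~(1); for the converse, both arguments place $L$ inside a $\Pi$-closure $\bar K$ of $K$, pick the Galois extension $K\l a\r$ inside that same $\bar K$, and use $a^{-1}b\in(G,s)^\#(\bar K)=(G,s)^\#(K)$ to conclude $K\l a\r=K\l b\r$. Your write-up is slightly more explicit about why $tp(b/K\,\ld b)=tp(b/K)$ and about invoking uniqueness of the Galois extension in a given $\Pi$-closure, but the logical skeleton is identical.
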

\begin{proof}
By the above discussion, the Galois extension associated to $\ld x=\al$ is a $(G,s)^\#$-strongly normal. Thus, $(G,s)^{\sharp}(K)=(G,s)^{\sharp}(\bar L)$. For the converse, suppose $L=K\l b\r_{\Pi}$ where $b$ is a solution to $\ld x=\al$. Then, since $tp(b/K)$ is $(G,s)^\#$-internal and $b$ is a fundamental system of solutions, $L$ is a $(G,s)^\#$-strongly normal extension and so $L$ is contained in a $\Pi$-closure $\bar K$ of $K$. Let $a$ be a tuple from $\bar K$ such that $\I(a/K)_{\Pi}$ is a maximal $\Pi$-ideal. Then $K\l a\r_{\Pi}$ is the Galois extension associated to $\ld x=\al$. But, by Lemma \ref{propder}, $b^{-1}a\in (G,s)^{\sharp}(\bar K)=(G,s)^\#(K)$, and hence $L=K\l a\r_{\Pi}$. 
\end{proof}

The proof of Lemma 3.9 of \cite{Pi2} extends directly to the partial case and yields the following proposition.

\begin{proposition}\label{gal}
Let $(G,s)$ and $\al$ be as in Proposition \ref{char}. Then the Galois group associated to $\ld x =\al$ is of the form $(H,s_H)^{\sharp}$ for some relative D-subgroup $(H,s_H)$ of $(G,s)$ defined over $K$. Moreover, if $a$ is a solution to $\ld x=\al$ such that $\I(a/K)_{\Pi}$ is a maximal $\Pi$-ideal, then the action of $(H,s_H)^\#$ on $tp(a/K)^{\U}$ is given by $h.b=(aha^{-1})b$.
\end{proposition}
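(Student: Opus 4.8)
The plan is to realise the abstract Galois group $\G$ of Theorem~\ref{group} concretely as a $\Pi$-algebraic subgroup of $(G,s)^\#$, and then invoke the correspondence of Lemma~\ref{corre}. Throughout I write $X=(G,s)^\#$ and $Z=tp(a/K)^\U$; recall that, by the construction of the associated Galois extension, $tp(a/K)$ is isolated, so $Z$ is $K$-definable (it is the $\Pi$-locus of $a$ over $K$).

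First I would set up an embedding of $\operatorname{Gal}(L/K)=\operatorname{Gal}_X(L/K)$ into $X$. If $\sigma\in\operatorname{Gal}_X(L/K)$ then $\sigma$ fixes $K\langle X\rangle$ pointwise, and since $\sigma$ is an automorphism over $K$ we have $\sigma(a)\models tp(a/K)$; as $\ld x=\al$ is a condition over $K$, $\sigma(a)$ is again a solution of $\ld x=\al$. By Lemma~\ref{propder}(4), $\ld^{-1}(\al)=a\,X$, so $g_\sigma:=a^{-1}\sigma(a)\in X$. The map $\sigma\mapsto g_\sigma$ is an injective group homomorphism: each $g_\tau$ lies in $X\subseteq K\langle X\rangle$ and is therefore fixed by $\sigma$, whence $\sigma\tau(a)=\sigma(a\,g_\tau)=\sigma(a)\,g_\tau=a\,g_\sigma g_\tau$ gives $g_{\sigma\tau}=g_\sigma g_\tau$; and $g_\sigma=e$ forces $\sigma(a)=a$, hence $\sigma=\mathrm{id}$ on $L\langle X\rangle$. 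Since the action of $\operatorname{Gal}_X(L/K)$ on $Z$ is regular, $Z=\{\sigma(a):\sigma\}=a\,H_0$, where $H_0:=\{g_\sigma:\sigma\}$ is the image subgroup.

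The key step is to see that $H_0$ is a $\Pi$-algebraic subgroup of $X$ defined over $K$. Here I exploit the torsor structure: from $Z=a\,H_0$ one reads off $H_0=\{b_1^{-1}b_2:\,b_1,b_2\in Z\}$, since $(ah_1)^{-1}(ah_2)=h_1^{-1}h_2$ and $H_0$ is a group. Because $Z$ is $K$-definable and the group operations of $G$ are $K$-definable, this exhibits $H_0$ as a $\Pi$-constructible subgroup of the $\Pi$-algebraic group $X$ defined over $K$; and a $\Pi$-constructible subgroup is $\Pi$-closed, the Kolchin topology being Noetherian so that the standard argument $\overline{H_0}=H_0\cdot H_0=H_0$ applies. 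Thus $H_0$ is a $\Pi$-algebraic subgroup of $(G,s)^\#$ over $K$, and Lemma~\ref{corre} (applicable by the standing hypothesis $(G,s)^\#(K)=(G,s)^\#(\bar K)$) yields a relative D-subgroup $(H,s_H)$ of $(G,s)$ over $K$, namely $H$ the $\D$-closure of $H_0$ over $K$ with $s_H=s|_H$, such that $(H,s_H)^\#=H_0$. Composing $\operatorname{Gal}(L/K)\cong H_0$ with the isomorphism of Theorem~\ref{group}(1) identifies the Galois group with $(H,s_H)^\#$.

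For the moreover clause I would transport the action along $\sigma\leftrightarrow g_\sigma=:h$: the action of $\operatorname{Gal}(L/K)$ on $Z$ is $h.b=\sigma(b)$. Writing $b=ah'$ with $h'=a^{-1}b\in H_0\subseteq X(K)$, which is fixed by $\sigma$, I compute $\sigma(b)=\sigma(a)h'=(a\,g_\sigma a^{-1})(ah')=(aha^{-1})b$, the asserted formula, which is $L$-definable as in Theorem~\ref{group}. I expect the main obstacle to be the middle paragraph, namely establishing the $K$-definability and $\Pi$-closedness of $H_0$: the description $H_0=\{b_1^{-1}b_2:\,b_1,b_2\in Z\}$ is what makes $K$-definability transparent and reduces the rest to the standard fact that constructible subgroups are closed, after which Lemma~\ref{corre} does the remaining work.
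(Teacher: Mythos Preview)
Your proof is correct and follows essentially the same route as the paper: both realize the Galois group as the image of $\sigma\mapsto a^{-1}\sigma(a)$ inside $(G,s)^\#$, check this is a $K$-definable $\Pi$-algebraic subgroup, and then apply Lemma~\ref{corre}. The only cosmetic difference is that the paper obtains $K$-definability and closedness in one stroke by describing this image as the stabilizer $Y=\{g\in(G,s)^\#:Zg=Z\}$ and appealing to the construction of Theorem~\ref{group}(1), whereas you use $H_0=\{b_1^{-1}b_2:b_i\in Z\}$; note that closedness of $H_0$ is seen most directly from $H_0=a^{-1}Z$ (with $Z$ the $\Pi$-locus of $a$), which is cleaner than the constructible-subgroup argument you sketch.
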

\begin{proof}
We give a slightly more direct argument than what is found in \cite{Pi2}. Let $L=K\l a\r_{\Pi}$ be the Galois extension associated to $\ld x=\al$, where $a$ is a solution to $\ld x=\al$ and $\I(a/K)_{\Pi}$ is a maximal $\Pi$-ideal. Let $Z$ be the $\Pi$-locus of $a$ over $K$, note that $Z=tp(a/K)^{\U}$ and that $Z$ is a $\Pi$-algebraic subvariety of $(G,\al s)^{\sharp}$. Let $f$ be the multiplication on $G$ and $Y=\{g\in (G,s)^{\sharp}:Zg=Z\}$. Following the construction of Theorem \ref{group} (1) with the data $((G,s)^\#,a,Y,f)$, we get a bijection $\mu:\,$Gal$(L/K)_{\Pi}\to Y$ defined by $\mu(\sigma)=a^{-1}\sigma(a)$. It follows that $\mu$ is in fact a group isomorphism, where $Y$ is viewed as a subgroup of $(G,s)^{\sharp}$. Let $H$ be the $\D$-closure of $Y$ over $K$. Since $Y$ is a $\Pi$-algebraic subgroup of $(G,s)^\#$ defined over $K$, $H$ equipped with $s_H:=s|_H$ is a relative D-subgroup of $(G,s)$ defined over $K$ and $(H,s_H)^\#=Y$ (see Lemma \ref{corre}).

The moreover clause follows by (\ref{action1}) in the proof of Theorem \ref{group}~(1).
\end{proof}

The Galois correspondence given by Theorem \ref{normal} specializes to this context and, composed with the bijective correspondence between relative D-subgroups of a given relative D-group and the $\Pi$-algebraic subgroups of the sharp points given in Lemma \ref{corre}, yields the following correspondence.

\begin{corollary}
Let $(G,s)$ and $\al$ be as in Proposition \ref{char}. Let $L$ be the Galois extension associated to $\ld x=\al$ with Galois group $(H,s_H)^\#$. Then there is a Galois correspondence between the intermediate $\Pi$-fields (of $K$ and $L$) and the relative D-subgroups of $(H,s_H)$ defined over $K$.
\end{corollary}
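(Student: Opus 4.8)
The plan is to obtain the desired correspondence by composing three bijections that are already available: the abstract Galois correspondence of Theorem~\ref{normal}, the description of the Galois group as $(H,s_H)^\#$ from Proposition~\ref{gal}, and the $\sharp$-correspondence of Lemma~\ref{corre}. First I would note that, by Proposition~\ref{gal}, the Galois group of $L$ over $K$ is $\G=(H,s_H)^\#$ for a relative D-subgroup $(H,s_H)$ of $(G,s)$ defined over $K$, and that $L$ is a $\G$-strongly normal extension of $K$ (by the discussion preceding Proposition~\ref{char} together with Theorem~\ref{group}(3)). Thus Theorem~\ref{normal}(1) applies and yields a $1$:$1$, inclusion-reversing correspondence $F\mapsto \G_F$ between the intermediate $\Pi$-fields $K\le F\le L$ and the $K$-definable subgroups of $(H,s_H)^\#$.

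The crux of the argument is then to identify these $K$-definable subgroups with the $\Pi$-algebraic subgroups appearing in Lemma~\ref{corre}. I would argue that, for subgroups of the $\Pi$-algebraic group $(H,s_H)^\#$, being $K$-definable is the same as being $\Pi$-algebraic and defined over $K$. One inclusion is trivial; for the other I would invoke the well-known fact that a definable subgroup of a $\Pi$-algebraic group is $\Pi$-algebraic. The idea is that, by elimination of quantifiers in $DCF_{0,m}$, such a subgroup $S$ is constructible in the $\Pi$-topology, hence dense in its Kolchin closure $\bar S$ (which is again a subgroup); a dense constructible subset of the connected component $\bar S^0$ contains a nonempty $\Pi$-open set $U$, and the standard covering identity $U\cdot U=\bar S^0$ forces $\bar S^0\subseteq S$, whence $S$ is a finite union of cosets of $\bar S^0$ and is therefore $\Pi$-closed.

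Next I would verify that Lemma~\ref{corre} is applicable to $(H,s_H)$. Since $(H,s_H)$ is a relative D-subgroup of $(G,s)$ we have $(H,s_H)^\#=(G,s)^\#\cap H$, and so
\begin{displaymath}
(H,s_H)^\#(\bar K)=(G,s)^\#(\bar K)\cap H(\bar K)=(G,s)^\#(K)\cap H(\bar K)=(H,s_H)^\#(K),
\end{displaymath}
using the standing hypothesis $(G,s)^\#(\bar K)=(G,s)^\#(K)$ of Proposition~\ref{char}. Hence Lemma~\ref{corre} gives, via $\sharp$ and $\D$-closure, a $1$:$1$, inclusion-preserving correspondence between the $\Pi$-algebraic subgroups of $(H,s_H)^\#$ defined over $K$ and the relative D-subgroups of $(H,s_H)$ defined over $K$. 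Composing this with the two correspondences above produces the claimed Galois correspondence between the intermediate $\Pi$-fields of $K$ and $L$ and the relative D-subgroups of $(H,s_H)$ defined over $K$. I expect the only step requiring genuine care to be the middle one, the equivalence of ``$K$-definable'' and ``$\Pi$-algebraic'' for subgroups of $(H,s_H)^\#$; the remaining steps are bookkeeping built on results already established.
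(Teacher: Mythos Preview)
Your proposal is correct and follows exactly the approach the paper takes: it composes the Galois correspondence of Theorem~\ref{normal} with the $\sharp$-correspondence of Lemma~\ref{corre}, identifying $\G$ with $(H,s_H)^\#$ via Proposition~\ref{gal}. You have in fact supplied more detail than the paper does, namely the verification that $(H,s_H)^\#(\bar K)=(H,s_H)^\#(K)$ so that Lemma~\ref{corre} applies to $(H,s_H)$, and the passage from $K$-definable to $\Pi$-algebraic subgroups; the paper simply asserts the composition without spelling out either point.
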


Now, let $(G,s)$ and $\al$ be as in Proposition \ref{char}, and $L$ be the Galois extension associated to $\ld x=\al$ with Galois group $(H,s_H)^\#$. Suppose that there is $\D'\subseteq \D$ such that $G$ is a $\D'$-algebraic group and $s$ is a $\D'$-section (recall that in this case $\ta G=\tau_{\DD/\D'}G$). Let $\Pi'=\DD\cup\D'$. We can consider the Galois extension $L'$ and Galois group $(H',s_{H'})^\#$ associated to $\ld x=\al$ when the latter is viewed as a logarithmic $\Pi'$-equation (note that $\al$ is also an integrable point when $(G,s)$ is viewed as a relative D-group w.r.t. $\DD/\D'$). In other words, $L'$ is a $\Pi'$-field extension of $K$ of the form $K\l a\r_{\Pi'}$ where $\ld a=\al$ and $\I(a/K)_{\Pi'}$ is a maximal $\Pi'$-ideal of $K\{ x\}_{\Pi'}$, and $(H',s_{H'})$ is a relative D-subgroup w.r.t. $\DD/\D'$ of $(G,s)$ such that $(H',s_{H'})^\#$ is (abstractly) isomorphic to the group of $\Pi'$-automorphisms Gal$(L'/K)_{\Pi'}$. We have the following relation between the Galois extensions $L$ and $L'$, and the groups $H$ and $H'$:

\begin{proposition}\label{as}
Let $L$, $L'$, $H$ and $H'$ be as above. If $L=K\l a\r_\Pi$ then $L'=K\l a\r_{\Pi'}$, and $H'$ equals the $\D'$-closure (in the $\D'$-Zariski topology) of $H$ over $K$.
\end{proposition}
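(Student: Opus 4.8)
The plan is to establish the two assertions separately: the equality $L'=K\l a\r_{\Pi'}$ from the characterization in Proposition~\ref{char}, and the description of $H'$ from Proposition~\ref{gal} together with a density argument. Throughout, since $s$ is a $\D'$-section we have $\ta G=\tau_{\DD/\D'}G$, so $(G,s)^\#$ is a $\Pi'$-definable set whose $\U$-points do not depend on whether it is read in $\Pi$ or in $\Pi'$, and $\ld$ is given by the same $\D'$-rational data in both settings; in particular $\ld x=\al$ is literally the same system, and the $\Pi$-solution $a$ is a $\Pi'$-solution.

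First I would transfer the standing hypothesis to $\Pi'$. Since the $\Pi'$-reduct of a model of $DCF_{0,m}$ is a model of $DCF_{0,|\Pi'|}$, any $\Pi'$-closure of $K$ may be realized inside the $\Pi'$-reduct of a $\Pi$-closure $\bar K$; as $(G,s)^\#(\bar K)=(G,s)^\#(K)$ and $(G,s)^\#$ has the same $\U$-points in both languages, this gives $(G,s)^\#(\bar K_{\Pi'})=(G,s)^\#(K)$ for a $\Pi'$-closure $\bar K_{\Pi'}$, and also makes $\al$ an integrable $K$-point for the $\Pi'$-reading. Now $L=K\l a\r_\Pi$, being the $\Pi$-Galois extension, lies by Remark~\ref{rem}~(ii) in a $\Pi$-closure $\bar K$ with $(G,s)^\#(\bar K)=(G,s)^\#(K)$; choosing a $\Pi'$-closure $\bar{L'}$ of $L'=K\l a\r_{\Pi'}$ inside the $\Pi'$-reduct of $\bar K$ yields $(G,s)^\#(\bar{L'})=(G,s)^\#(K)$. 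By the $\Pi'$-instance of Proposition~\ref{char} this is exactly the condition for $K\l a\r_{\Pi'}$ to be the Galois extension associated to $\ld x=\al$ as a $\Pi'$-equation, so $L'=K\l a\r_{\Pi'}$.

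For the second assertion I would work from Proposition~\ref{gal}. Write $Z=tp(a/K)^\U$ for the $\Pi$-locus and $Z'$ for the $\Pi'$-locus of $a$ over $K$; then $Y:=(H,s_H)^\#=\{g\in (G,s)^\#:Zg=Z\}$ and $Y':=(H',s_{H'})^\#=\{g\in (G,s)^\#:Z'g=Z'\}$, with $H$ and $H'$ their $\D$- and $\D'$-closures over $K$. Since $\D'\subseteq\D$, the $\D'$-topology is coarser than the $\D$-topology, so the $\D'$-closure over $K$ of $H$ equals the $\D'$-closure over $K$ of $Y$; thus it suffices to prove that $Y$ and $Y'$ have the same $\D'$-closure over $K$. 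From the regular action of Proposition~\ref{gal} we have $Z=aY$ and $Z'=aY'$, and restricting types gives $Z\subseteq Z'$, hence $Y\subseteq Y'$; one inclusion of the closures is immediate.

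The heart of the matter, and the step I expect to be the main obstacle, is the reverse inclusion, namely that $Y$ is $\D'$-dense in $Y'$. Denote by $\overline{S}^{\,\D'}_{F}$ the $\D'$-Zariski closure of a set $S$ over a field $F\supseteq K$, and let $W'=\overline{\{a\}}^{\,\D'}_{K}$ be the $\D'$-locus of $a$ over $K$. Over $K$ one has $\overline{Z}^{\,\D'}_{K}=W'=\overline{Z'}^{\,\D'}_{K}$ (because $a\in Z\subseteq Z'\subseteq W'$ and $a$ is $\D'$-generic in $W'$), but this cannot be transported to $Y$ directly, since the translation $c\mapsto a^{-1}c$ is defined only over $L'$. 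The key is that the same equality holds over $L'$: choosing a realization $c$ of $tp(a/K)$ (in the $\Pi$-sense) with $c\ind_K L'$, we have $c\in Z$, and since independence in $DCF_{0,m}$ restricts to independence in the $\D'$-reduct, $c$ remains $\D'$-generic in $W'$ over $L'$; hence $W'=\overline{\{c\}}^{\,\D'}_{L'}\subseteq\overline{Z}^{\,\D'}_{L'}\subseteq\overline{Z'}^{\,\D'}_{L'}\subseteq W'$, so $\overline{Z}^{\,\D'}_{L'}=\overline{Z'}^{\,\D'}_{L'}$. As $c\mapsto a^{-1}c$ is $\D'$-biregular over $L'$, translating gives $\overline{Y}^{\,\D'}_{L'}=\overline{Y'}^{\,\D'}_{L'}$. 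Finally I would descend to $K$: any $\D'$-polynomial over $K$ vanishing on $Y$ vanishes on $\overline{Y}^{\,\D'}_{L'}=\overline{Y'}^{\,\D'}_{L'}\supseteq Y'$, so $Y$ and $Y'$ have the same $\D'$-ideal over $K$; combined with the previous paragraph this yields $H'=\overline{Y'}^{\,\D'}_{K}=\overline{Y}^{\,\D'}_{K}=\overline{H}^{\,\D'}_{K}$. The delicate points are exactly the claim that a $\Pi$-conjugate of $a$ independent from $L'$ becomes $\D'$-generic over $L'$, and the descent of the $L'$-equality of closures back to $K$.
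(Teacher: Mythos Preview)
Your first part is essentially the paper's argument: both reduce to the $\Pi'$-instance of Proposition~\ref{char} by embedding a $\Pi'$-closure of $K\l a\r_{\Pi'}$ inside (the $\Pi'$-reduct of) a $\Pi$-closure of $L$ and reading off $(G,s)^\#(\bar{L'})=(G,s)^\#(K)$.

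For the second part your route is genuinely different from the paper's, and correct. The paper proceeds Galois-theoretically: after observing $(H,s_H)^\#\subseteq (H',s_{H'})^\#$ (via $\sigma\mapsto \sigma|_{L'}$), it lets $Y$ be the $\Pi'$-closure of $(H,s_H)^\#$, notes $Y$ is a $\Pi'$-subgroup of $(H',s_{H'})^\#$, and then argues that the fixed field of $(H,s_H)^\#$ being $K\l (G,s)^\#\r_\Pi$ forces the fixed field of $Y$ to be $K\l (G,s)^\#\r_{\Pi'}$; the Galois correspondence (Theorem~\ref{normal}) then gives $Y=(H',s_{H'})^\#$, from which the statement about $\D'$-closures follows. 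You instead argue purely topologically: exploiting $Z=aY$, $Z'=aY'$ and the fact that a $\Pi$-independent conjugate of $a$ remains $\D'$-generic in the $\D'$-locus of $a$ over $L'$, you get $\overline{Z}^{\,\D'}_{L'}=\overline{Z'}^{\,\D'}_{L'}$, translate by $a^{-1}$ (which lives over $L'$), and then descend the equality of ideals back to $K$. Your approach sidesteps the Galois correspondence entirely and is in that sense more elementary, at the cost of the somewhat delicate independence-transfer step (which is fine: nonforking in $DCF_{0,m}$ is algebraic disjointness, which clearly restricts to the $\D'$-reduct). The paper's approach, by contrast, stays within the Galois formalism already set up and makes the role of the automorphism groups transparent.
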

\begin{proof}
Since $(G,s)^\#(K)=(G,s)^\#(\bar L)$ for some $\Pi$-closure $\bar L$ of $L$, then $(G,s)^\#(K)=(G,s)^\#(\overline{K\l a\r}_{\Pi'})$ for some $\Pi'$-closure $\overline{K\l a\r}_{\Pi'}$ of $K\l a\r_{\Pi'}$. Now Proposition \ref{char} implies that $K\l a\r_{\Pi'}$ is the Galois extension associated to $\ld x=\al$ when viewed as a logarithmic $\Pi'$-equation, and so $L'=K\l a\r_{\Pi'}$. Now, to show that $H'$ is the $\D'$-closure of $H$ over $K$ it suffices to show that $(H',s_{H'})^\#$ is the $\Pi'$-closure of $(H,s_H)^\#$ over $K$. First we check that $(H,s_H)^\#\subseteq (H',s_{H'})^\#$. Let $h\in (H,s_H)^\#$, then there is $\sigma\in$ Gal$(L/K)_\Pi$ such that $h=a^{-1}\sigma(a)$. From the latter equation it is easy to see that $\sigma(a)\in L'\l(G,s)^\#\r_{\Pi'}$, and so $\sigma$ restricts to an element of Gal$(L'/K)_{\Pi'}$. Thus, $a^{-1}\sigma(a)\in (H',s_{H'})^\#$, showing the desired containment. Let $Y$ be the $\Pi'$-closure of $(H,s_H)^\#$ over $K$, then adapting the proof of Lemma \ref{corre} one can see that $Y$ is a $\Pi'$-algebraic subgroup of $(H',s_{H'})^\#$. The fixed field of $(H,s_H)^\#$ is all of $K\l (G,s)^\#\r_\Pi$, then the fixed field of $Y$ (as a $\Pi'$-subgroup of $(H',s_{H'})^\#$) is all of $K\l(G,s)^\#\r_{\Pi'}$. Hence, by the Galois correspondence, $Y=(H',s_{H'})^\#$.
\end{proof}

In the case when $(G,s)=(GL,s_0)$ and $\D'=\emptyset$, Proposition \ref{as} specializes to Proposition 3.6 of \cite{PM}.

\

We finish this section by showing that, under some natural assumptions on the differential field $K$, every generalized strongly normal extension of $K$ is the Galois extension of a logarithmic differential equation. This is in analogy with Remark 3.8 of \cite{Pi2}.

\begin{theorem}\label{main}
Let $X$ be a $K$-definable set and $L$ an $X$-strongly normal extension of $K$. Suppose $\D$ is a set of linearly independent elements of $\operatorname{span}_{K^\Pi}\Pi$ such that $\D$ bounds the $\Pi$-type of $X$, $|\D|<|\Pi|$ and $K$ is $\D$-closed. If we extend $\D$ to a basis $\DD\cup\D$ of $\operatorname{span}_{K^\Pi}\Pi$, then there is a connected relative D-group $(H,s)$ w.r.t. $\DD/\D$ defined over $K$ and $\al$ an integrable $K$-point of $(H,s)$ such that $L$ is the Galois extension associated to $\ld x=\al$.
\end{theorem}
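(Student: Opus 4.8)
The plan is to reduce to the Galois group, realize the locus of a generator of $L$ as the sharp points of a torsor over a relative D-group, and then use that $K$ is $\D$-closed to trivialize that torsor and read off the integrable point $\al$.

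First I would pass to the Galois group. By Theorem~\ref{group}~(3) we may replace $X$ by $\G:=\mathrm{Gal}(L/K)$; writing $L=K\l a\r$, the group $\G$ acts regularly on $Z:=tp(a/K)^\U$. Since $\D$ bounds the $\Pi$-type of $X$, Lemma~\ref{onint}~(2) together with Proposition~\ref{inter} show that $\D$ bounds the $\Pi$-type of $a$ over $K$, and hence, by Theorem~\ref{group}~(2), the $\Pi$-type of $\G$. Moreover Theorem~\ref{group}~(6) gives $\G(K)=\G(\bar K)$, and by Remark~\ref{rem}~(ii) we have $L\subseteq\bar K$, so $\bar L=\bar K$ and $\G(K)=\G(\bar L)$. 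I would first treat the case in which $\G$ is connected (equivalently, $K$ is relatively algebraically closed in $L$, by Theorem~\ref{group}~(5)); the general case should follow by embedding $\G$ into a connected relative D-group, so that it appears only as a relative D-subgroup, which is permitted by Proposition~\ref{gal}.

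Assuming $\G$ connected, Theorem~\ref{co} (applicable since $\D$ bounds $\Pi$-type$(\G)$, $|\D|<|\Pi|$, and we are given the basis $\DD\cup\D$) produces a connected relative D-group $(H,s)$ over $K$ together with a $K$-definable isomorphism $\G\cong (H,s)^\sharp$. The heart of the argument, and the step I expect to be the main obstacle, is to realize the $\G$-torsor $Z$ at the level of relative D-varieties: I would construct a relative D-variety $(V,t)$ over $K$ such that $V$ is an $H$-torsor whose $\ta$-action is compatible with $s$ (an ``$(H,s)$-torsor''), together with a $K$-definable identification $(V,t)^\sharp\cong Z$ intertwining the two $\G$-actions. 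This is the homogeneous-space analogue of Theorem~\ref{co}: one feeds the regular action of $\G$ on $Z$ into Hrushovski's theorem (\cite{Hru}), transferred to the $\D$-rational category exactly as in the proof of Theorem~\ref{co} (using that $\D$ bounds the relevant $\Pi$-types, so that the action and its transition data are $\D$-rational), obtaining the $\D$-algebraic torsor $V$; the section $t$ is then built by transporting $\nabla_\DD$ off $Z$, and one checks torsor-compatibility with $s$ and the integrability condition just as for $\bar t$ in that proof. Getting $V$ to be a genuine $\D$-algebraic torsor (rather than merely a $\D$-birational chunk) and verifying integrability is where the real work lies.

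Once $(V,t)$ is in hand the rest is routine, and here the hypothesis that $K$ is $\D$-closed enters decisively: although the $\Pi$-torsor $Z$ has no $K$-points, its $\D$-level incarnation $V$ is a nonempty $\D$-algebraic variety over the $\D$-closed field $K$, so $V(K)\neq\emptyset$. Picking $c_0\in V(K)$, the map $h\mapsto c_0\cdot h$ is a $K$-isomorphism of $H$-torsors $H\to V$; transporting $t$ through it gives a $\D$-section of $\ta H$ compatible with the group law, necessarily of the form $\al s$ for a unique $\al\in\ta H_e$. Since $t$ and $c_0$ are defined over $K$ the point $\al$ is a $K$-point, and the integrability of $t$ makes $\al$ an integrable point of $(H,s)$ (Lemma~\ref{intpt}). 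Under the trivialization $(V,t)^\sharp$ becomes $(H,\al s)^\sharp=\{h:\nabla_\DD h=\al s(h)\}=\ld^{-1}(\al)$, so the identification of the previous paragraph yields a $K$-definable bijection $Z\cong\ld^{-1}(\al)$; as both are $K$-irreducible (Proposition~\ref{lema1}~(3)), this bijection matches generic types. Letting $c$ correspond to $a$, we get $tp(c/K)=tp(a/K)$, whence $K\l c\r=L$ and $\I(c/K)_\Pi$ is maximal (since $tp(a/K)$ is isolated, by Remark~\ref{rem}~(ii)). Finally, $(H,s)^\sharp(K)=\G(K)=\G(\bar L)=(H,s)^\sharp(\bar L)$, so by the construction of the associated Galois extension preceding Proposition~\ref{char} (together with Proposition~\ref{char} itself) the field $L=K\l c\r$ is exactly the Galois extension associated to $\ld x=\al$, completing the proof.
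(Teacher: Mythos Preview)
Your approach is sound but differs from the paper's. Both begin identically: pass to $\G$, note $\D$ bounds its $\Pi$-type, and invoke Theorem~\ref{co} to realise $\G$ as $(H,s)^\sharp$. (Your worry about the non-connected case is unnecessary: $K$ $\D$-closed with $|\D|\geq 1$ forces $K$ algebraically closed, so $\G$ is connected by Theorem~\ref{group}~(5); the paper simply asserts this.) From there the routes diverge. You propose to build the $(H,s)$-torsor $(V,t)$ with $(V,t)^\sharp\cong Z$ at the level of relative D-varieties, then use $\D$-closedness of $K$ to find $c_0\in V(K)$ and read off $\al$ from the trivialisation. The paper instead works cohomologically: from the isomorphism $\mu:\mathrm{Gal}(L/K)\to(H,s)^\sharp$ it writes down a definable cocycle $\nu:\mathrm{Aut}_\Pi(\bar K/K)\to H(\bar K)$, $\nu(\sigma)=h(b,\sigma b)$, checks the cocycle identity using $(H,s)^\sharp(\bar K)=(H,s)^\sharp(K)$, and then invokes (an extension of) Pillay's Proposition~3.2 to trivialise it over the $\D$-closed field $K$, producing $a\in H(\bar K)$ with $\nu(\sigma)=a^{-1}\sigma(a)$; one then checks $K\l a\r_\Pi=L$ and that $\al:=\ld(a)$ is $\mathrm{Aut}_\Pi(\U/K)$-invariant, hence a $K$-point.

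These are two faces of the same $H^1$ computation, and what your geometric route buys is conceptual clarity about \emph{where} $\D$-closedness enters: it is exactly the existence of a $K$-point on the $\D$-algebraic torsor $V$. The cost is the construction of $(V,t)$ itself, which you correctly flag as the real work; the paper's cocycle argument sidesteps this entirely, since the cocycle is written down in one line from $\mu$ and the trivialisation is outsourced to a known result. One small slip in your write-up: the $K$-definable bijection $Z\cong\ld^{-1}(\al)$ does \emph{not} give $tp(c/K)=tp(a/K)$ (they live in different definable sets); what it gives is interdefinability of $a$ and $c$ over $K$, which is all you need for $K\l c\r_\Pi=L$ and for $\I(c/K)_\Pi$ to be maximal.
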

\begin{proof}
We just need to check that the argument given in Proposition 3.4~(ii) of \cite{Pi} for the finite-dimensional case extends to this setting. Let $\G$ be the Galois group of $L$ over $K$. Note that, by Theorem \ref{group}~(5), $\G$ is connected and that, by Lemma~\ref{onint}~(2) and Theorem \ref{group}~(2), $\D$ also bounds the $\Pi$-type of $\G$. Thus, Theorem \ref{co} implies that $\G$ is of the form $(H,s)^\#$ for some relative D-group $(H,s)$ w.r.t. $\DD/\D$ defined over $K$.

Now, let $b$ be a tuple such that $L=K\l b\r_{\Pi}$ and let $\bar K$ be a $\Pi$-closure of $K$ that contains $L$. Let $\mu$ be the canonical isomorphism from Gal$(L/K)$ to $(H,s)^\#$. We know there is some $K$-definable function $h$ such that $\mu(\sigma)=h(b,\sigma(b))$ for all $\sigma \in$ Gal$(L/K)$. Consider the map $\nu:Aut_{\Pi}(\bar K/K)\to H(\bar K)$ defined by $\nu(\sigma)=h(b,\sigma(b))$. Let $\sigma_i\in Aut_{\Pi}(\bar K/K)$ for $i=1,2$, and denote by $\sigma_i'$ the unique elements of Gal$(L/K)$ such that $\sigma_i'(b)=\sigma_i(b)$. We have 
\begin{eqnarray*}
\nu(\sigma_1\comp \sigma_2)
&=& h(b,\sigma_1\comp\sigma_2(b))=h(b,\sigma_1'\comp\sigma_2'(b)) \\
&=& \mu(\sigma_1'\comp\sigma_2')=\mu(\sigma_1')\, \mu(\sigma_2') \\
&=& h(b,\sigma_1'(b))\, h(b,\sigma_2'(b))=h(b,\sigma_1(b))\, h(b,\sigma_2(b)) \\
&=& \nu(\sigma_1)\, \nu(\sigma_2)=\nu(\sigma_1)\, \sigma_1(\nu(\sigma_2)) \\
\end{eqnarray*}
where the last equality follows from $(H,s)^\#(\bar K)=(H,s)^\#(K)$. In the terminology of (\cite{Ko2}, Chap. 7) or \cite{Pi3} we say that $\nu$ is a definable cocycle from $Aut_{\Pi}(\bar K/K)$ to $H$. Using the fact that $K$ is $\D$-closed, we can extend the argument from Proposition 3.2 of \cite{Pi} to show that every definable cocycle is cohomologous to the trivial cocycle. In other words, we get a tuple $a\in H(\bar K)$ such that $\nu(\sigma)=a^{-1}\,\sigma(a)$ for all $\sigma\in Aut_{\Pi}(\bar K/K)$. 

\noindent {\bf Claim.} $K\l a\r_{\Pi}=K\l b \r_{\Pi}$. \\
Towards a contradiction suppose $a\notin K\l b\r_{\Pi}$. Since $a\in \bar K$ and $\bar K$ is also a $\Pi$-closure of $K\l b\r_{\Pi}$ (see \cite{Pi6}, Chap. 8), we get that $tp(a/K,b)$ is isolated. Thus we can find $c\in \bar K$ realising $tp(a/K,b)$ such that $c\neq a$. Then there is $\sigma\in Aut_{\Pi}(\bar K/K\l b\r_{\Pi})$ such that $\sigma(a)=c$ (see \cite{Pi6}, Chap. 8), but this is impossible since $\sigma$ fixes $b$ iff $\nu(\sigma)=e$ (where $e$ is the identity of $H$) iff $a^{-1}\sigma(a)=e$ iff $\sigma$ fixes $a$. The other containment is analogous. This proves the claim.

By Proposition \ref{char}, all that is left to show is that $\al:=l_s(a)$ is a $K$-point. Let $\sigma\in Aut_{\Pi}(\U/K)$, then $a^{-1}\sigma(a)\in (H,s)^\#$. Thus, $\nabla_{\DD}(a^{-1}\sigma(a))=s(a^{-1}\sigma(a))$ and so
\begin{eqnarray*}
\sigma(\al)
&=& \sigma((\nabla_{\DD}a)(s(a))^{-1})\\
&=& (\nabla_{\DD}\sigma(a))(s(\sigma(a)))^{-1}\\
&=& (\nabla_{\DD}a)(s(a))^{-1} \\
&=& \al.
\end{eqnarray*}
As desired.
\end{proof}

\

\section{Examples}

In this section we give two non-linear examples of Galois groups associated to logarithmic differential equations. Our examples are modeled after Pillay's non-linear example given in \cite{Pi2}.

First we exhibit a finite-dimensional non-linear Galois group in two derivations:

\begin{example}
Let $\Pi=\{\d_t,\d_w\}$. Let $G={\mathbb G}_m\times {\mathbb G}_a$ and $s:G\to \tau_{\Pi/\emptyset}G=(TG)^2$ be the (algebraic) section defined by $s(x,y)=(x,y,xy,0,xy,0)$. Then $(G,s)$ is a relative D-group w.r.t. $\Pi/\emptyset$, and the logarithmic derivative $\ld:G\to (TG)^2_{(1,0)}$ is given by $$\ld(x,y)=\left(1,0,\frac{\d_t x}{x}-y,\d_t y,\frac{\d_w x}{x}-y,\d_w y\right).$$ Thus, $$(G,s)^\#=\{(x,y)\in G:\, \d_t y=\d_w y=0 \text{ and } \d_t x=\d_w x=yx\}.$$ We take the ground $\Pi$-field to be $K:=\mathbb{C}(t,e^{ct},e^{cw}:c\in \mathbb{C})$, where we regard $t$ and $w$ as two complex variables, and the $\Pi$-field extension $L:=K(w,e^{2wt+w^2})$. Then $L$ is contained in a $\Pi$-closure $\bar K= \bar{\mathbb{C}}$ of $K$ and $\mathbb C$. 

We now show that $(G,s)^\#(\bar K)=(G,s)^\#(K)$. Let $(a,b)\in (G,s)^\#(\bar K)$, then $b\in \bar{K}^\Pi=\mathbb C$ and $$\d_t\left(\frac{a}{e^{b(t+w)}}\right)=\d_w\left(\frac{a}{e^{b(t+w)}}\right)=0.$$ Thus $a=ce^{b(t+w)}$ for some $c\in\mathbb C$, and so $a\in K$.

Now, as $L$ is generated by $(e^{2wt+w^2},2w)$ and this pair is a solution to
\begin{displaymath}
\left\{
\begin{array}{c}
\d_t x=yx \\
\d_t y=0 \\
\d_w x=(y+2t)x \\
\d_w y=2
\end{array}
\right. ,
\end{displaymath}
$L$ is the Galois extension associated to $\ld (x,y)=(1,0,0,0,2t,2)$. Also, since the transcendence degree of $L$ over $K$ is $2$ and $(G,s)^\#$ is a connected $\Pi$-algebraic group whose Kolchin polynomial is constant equal to $2$, then the Galois group associated to $\ld (x,y)=(1,0,0,0,2t,2)$ is $(G,s)^\#$.
\end{example}

Suppose $\DD=\{\d_1,\dots,\d_r\}$, $\D=\{\d_{r+1},\dots,\d_m\}$ is a partition of $\Pi=\{\d_1,\dots,\d_m\}$. Note that while Proposition \ref{bab} shows that for every connected relative D-group $(G,s)$ the subgroup $(G,s)^\#$ is the Galois group of a generalized strongly normal extension, it is not known (at least to the author) if $(G,s)^\#$ is the Galois group of a logarithmic differential equation. The following proposition gives a sufficient condition on $G$ that allows a construction of a Galois extension of a logarithmic differential equation on $(G,s)$ with Galois group $(G,s)^\#$.

\begin{proposition}\label{abo}
Let $(G,s)$ be a relative D-group and suppose $G$ is a connected algebraic group. Then there is $\Pi$-field $K$ and an integrable $K$-point $\al$ of $(G,s)$ such that the Galois group associated to $\ld x=\al$ is  $(G,s)^\#$.
\end{proposition}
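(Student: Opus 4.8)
The plan is to adapt Pillay's construction in \cite{Pi2} by producing $\al$ as the logarithmic derivative of a sufficiently generic point of $G$. First I would fix a differentially closed $\Pi$-field $k\models DCF_{0,m}$ containing a field of definition of $(G,s)$; since $k$ is its own $\Pi$-closure, $(G,s)^\#(k)=(G,s)^\#(\bar k)$. I then take $a$ to be a $\Pi$-generic point of $G$ over $k$, and set $\al:=\ld(a)\in\ta G_e$ and $K:=k\l\al\r_\Pi$. By Lemma \ref{propder}(3), $\al$ is integrable, and by construction it is a $K$-point, so $\ld x=\al$ is a logarithmic differential equation over $K$ with solution $a$.

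The key geometric point is that $a$ is $\Pi$-generic in the full coset $a(G,s)^\#$ over $K$. Indeed, since $\ld a=\al\in K$ we have $\nabla_\DD a=\al\, s(a)$, so each $D_i a$ is a $\D$-rational function of $a$ over $K$; hence $K\l a\r_\Pi=K\l a\r_\D$. A transcendence-degree computation — using that $a$, being $\Pi$-generic over $k$, is in particular $\D$-generic in $G$ over $k$, and that adjoining $\al$ only records the (independent) $\DD$-directional jet data — shows that $a$ remains $\D$-generic in $G$ over $K$. As $\al$ is integrable, $(G,\al s)$ is a relative D-variety with $a\in(G,\al s)^\#=a(G,s)^\#$, so Proposition \ref{lema1}(3) applies and $a$ is a $\Pi$-generic point of $(G,\al s)^\#$ over $K$. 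Thus $Z:=tp(a/K)^\U$ is the whole coset $a(G,s)^\#$; since this is a $\Pi$-definable translate of $(G,s)^\#$, we obtain $\Pi\text{-type}(a/K)=\Pi\text{-type}((G,s)^\#)$ and $\Pi\text{-dim}(a/K)=\Pi\text{-dim}((G,s)^\#)$.

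Granting the sharp-point condition $(G,s)^\#(K)=(G,s)^\#(\bar L)$ for $L:=K\l a\r_\Pi$ — so that, by Proposition \ref{char}, $L$ is the Galois extension associated to $\ld x=\al$ — the identification of the Galois group is immediate. By Proposition \ref{gal} the associated Galois group is of the form $(H,s_H)^\#$ for a relative D-subgroup $(H,s_H)$ of $(G,s)$, hence a $\Pi$-subgroup of $(G,s)^\#$; and by Theorem \ref{group}(2) its $\Pi$-type and typical $\Pi$-dimension equal those of $a$ over $K$, and so those of $(G,s)^\#$. Since $G$ is connected, $(G,s)^\#$ is $K$-irreducible by Proposition \ref{lema1}(3), so (exactly as in the finite-dimensional example) a $\Pi$-subgroup of $(G,s)^\#$ attaining the full $\Pi$-type and typical $\Pi$-dimension must be all of $(G,s)^\#$. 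Therefore the Galois group associated to $\ld x=\al$ is $(G,s)^\#$. Equivalently, $Z=a(G,s)^\#$ forces the stabilizer $\{g\in(G,s)^\#:Zg=Z\}$ to be the whole group.

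The hard part is the sharp-point condition: one must check that passing to $K=k\l\al\r_\Pi$ and then to a $\Pi$-closure of $L$ creates no new sharp points, i.e. $(G,s)^\#(K)=(G,s)^\#(\bar K)=(G,s)^\#(\bar L)$. This is precisely where the differentially closed base $k$ and the genericity of $a$ are needed: as $a$ is $\Pi$-generic over $k$, it is independent from $\bar k$, and one argues that the sharp points — forming a definable subgroup of strictly lower $\Pi$-type inside $G$ — cannot be enlarged by adjoining the independent generic datum $\al$ and taking $\Pi$-closures. Making this independence argument precise, together with the transcendence-degree computation showing that the $\D$-genericity of $a$ persists from $k$ to $K$, is the technical heart of the proof; the remaining steps are then formal consequences of Propositions \ref{lema1}, \ref{char}, \ref{gal} and Theorem \ref{group}.
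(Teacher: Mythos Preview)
Your construction is exactly the paper's: start from a $\Pi$-closed $k$, take $a$ a $\Pi$-generic of $G$ over $k$, set $\al=\ld a$, $K=k\l\al\r_\Pi$, $L=K\l a\r_\Pi$. Where you diverge is in the tools used for the two verifications you yourself flag as ``the technical heart.'' The paper dispatches both with Lascar rank, and this is precisely where the hypothesis that $G$ is an \emph{algebraic} group is used: it gives $RU(a/k)=\omega^m\cdot d$ with $d=\dim G$. For the sharp-point condition, one notes that any $g\in(G,s)^\#$ has $RU(g/k)<\omega^m$ (since $\Pi$-type$(G,s)^\#<m$), and then the Lascar inequalities force $RU(a/k,g)=\omega^m\cdot d$, i.e.\ $g\ind_k a$; since $k$ is $\Pi$-closed this yields $(G,s)^\#(\bar L)=(G,s)^\#(k)$. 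For the Galois group, the same rank bookkeeping shows $RU(ag/k)=\omega^m\cdot d$ for every $g\in(G,s)^\#$, so $ag$ is again $\Pi$-generic in $G$ over $k$, hence $tp(ag/k)=tp(a/k)$; as $\ld(ag)=\al$ this upgrades to $tp(ag/K)=tp(a/K)$, placing every $g$ in the stabiliser of $Z$.

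Your substitute arguments do not obviously go through. The phrase ``adjoining $\al$ only records the independent $\DD$-directional jet data'' is doing all the work in your $\D$-genericity claim and is not justified: $\al$ is a $\Pi$-rational function of $a$ over $k$, and you have given no reason why adjoining $k\l\al\r_\Pi$ cannot drop the $\D$-transcendence degree of $a$. Likewise, ``strictly lower $\Pi$-type'' alone does not yield independence; $\Pi$-type lacks the additivity that Lascar rank enjoys, so your sketch of the sharp-point condition is not a proof. Finally, even granting that $a$ is $\Pi$-generic in $(G,\al s)^\#$ over $K$, this only tells you $Z$ is the set of realisations of the generic type, not that $Z=a(G,s)^\#$; your stabiliser argument needs the latter. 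The paper's route via $tp(ag/K)=tp(a/K)$ gives $a(G,s)^\#\subseteq Z$ directly, closing this gap as well. In short: the skeleton is right, but you should replace the $\Pi$-type/transcendence-degree sketches by the Lascar rank computation, which is what the algebraicity hypothesis on $G$ is there for.
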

\begin{proof}
We follow the construction given by Pillay in \S 4 of \cite{Pi}. Let $K_0$ be a $\Pi$-closed field over which the relative D-group $(G,s)$ is defined and let $a$ be a $\Pi$-generic point of $G$ over $K_0$. Then, since $G$ is an algebraic group, $RU(a/K_0)=\omega^m\cdot d$ where $d$ is the (algebraic-geometric) dimension of $G$. Let $\al=\ld a$, $K=K_0\l \al\r_\Pi$ and $L=K\l a\r_\Pi$. Note that $L=K_0\l a\r_\Pi$.

We first check that $(G,s)^\#(\bar L)=(G,s)^\#(K_0)$. Since $K_0$ is $\Pi$-closed, if $b\in \bar L$ and $b\ind_{K_0} L$ then $b\in K_0$ (this follows from definability of types in $DCF_{0,m}$). Thus it suffices to show that $g\ind_{K_0} a$ for all $g\in (G,s)^\#$. Let $g\in (G,s)^\#$, then $RU(g/K_0)<\omega ^m$ and by the Lascar inequalities $$\omega^m \cdot d \leq RU(a,g/K_0)\leq RU(a/K_0,g)\oplus RU(g/K_0).$$ Hence, $RU(a/K_0,g)=\omega^m\cdot d$ and so $g\ind_{K_0}a$.
It follows from Proposition \ref{char} that $L$ is the Galois extension of $K$ associated to $\ld x=\al$. 

Now we check that the Galois group is $(G,s)^\#$. From the proof of Proposition \ref{gal} we see that the Galois group is equal to $$\{g\in (G,s)^\#: a\cdot g=\sigma(a) \text{ for some }\sigma\in \,\text{Gal}(L/K)\},$$ and thus it suffices to show that for each $g\in (G,s)^\#$ we have that $tp(a\cdot g/K)=tp(a/K)$. Let $g\in (G,s)^\#$, then by Lemma \ref{propder}~(1) we have that $\ld(a\cdot g)=\al$. Since $\{x\in G:\, \ld x=\al\}$ is in definable bijection with $(G,s)^\#$ and the latter has Lascar rank less than $\omega^m$, then $RU(a/K)< \omega^m$ and $RU(a\cdot g/K)<\omega^m$. Using Lascar inequalities again we get that $RU(\al/K_0)=\omega^m\cdot d$ and also that $RU(a\cdot g/K_0)=\omega^m\cdot d$. Then $a\cdot g$ is a generic point of $G$ over $K_0$, and so $tp(a\cdot g/K_0)=tp(a/K_0)$. But $\ld(a\cdot g)=\ld(a)=\al$, thus $tp(a\cdot g/K)=tp(a/K)$ as desired.
\end{proof}

We finish with an example of an infinite-dimensional Galois group associated to a non-linear logarithmic differential equation.

\begin{example}
Let $\Pi=\{\d_1,\d_2\}$. Let $G={\mathbb{G}}_m \times {\mathbb{G}}_a$ and $s:G\to \tau_{\d_2/\d_1}G=TG$ be the $\d_1$-section given by $$s(x,y)=(x,y,xy,\d_1 y).$$ Then $(G,s)$ is a relative D-group w.r.t. $\d_2/\d_1$. The logarithmic derivative  $\ld:G\to TG_e$ is given by $$\ld (x,y)=(1,0,\frac{\d_2 x}{x}-y, \d_2 y-\d_1 y).$$ Thus the sharp points are given by $$(G,s)^\#=\{(x,y)\in G: \, \d_2 x=xy \text{ and } \d_2y=\d_1 y\}.$$ Note that $\Pi$-type$(G,s)^\#=1$ and $\Pi$-dim$(G,s)^\#=2$. By Proposition \ref{abo}, there is $(\al_1,\al_2)$ such that $(G,s)^\#$ is the Galois group associated to the non-linear logarithmic differential equation 
\begin{displaymath}
\left\{
\begin{array}{c}
\d_2 x=x(y+\al_1) \\
\d_2 y=\d_1 y+\al_2
\end{array}
\right. .
\end{displaymath}
\end{example}

\

%\bibliographystyle{plain}
%\bibliography{ccs}

\begin{thebibliography}{10}

\bibitem{PM}
P. Cassidy and M. Singer.
\newblock Galois theory of parametrized differential equations and linear differential algebraic groups.
\newblock Differential Equations and Quantum Groups (IRMA Lectures in Mathematics and Theoretical Physics Vol. 9). EMS Publishing house, pp.113- 157, 2006.

\bibitem{Fre}
J. Freitag.
\newblock Model theory and differential algebraic geometry.
\newblock PhD Thesis. University of Illinois at Chicago, 2012.

\bibitem{Hru}
E. Hrushovski.
\newblock Unidimensional theories are superstable.
\newblock Annals of Pure and Applied Logic 50, pp.117-138, 1990.

\bibitem{Ka}
I. Kaplansky.
\newblock An introduction to differential algebra.
\newblock L'institut de math\'ematique de l'universit\'e de Nancago. Hermann, 1957.

\bibitem{Ko}
E. R. Kolchin.
\newblock Differential algebra and algebraic groups.
\newblock Academic Press. New York, New York 1973.

\bibitem{Ko2}
E. R. Kolchin.
\newblock Differential algebraic groups.
\newblock Academic Press, Inc. 1985.

\bibitem{Ko3}
E. R. Kolchin.
\newblock Algebraic matric groups and the Picard-Vessiot theory of homogeneous linear ordinary differential equations.
\newblock Annals of Mathematics. Vol. 49, No. 1, 1948.

\bibitem{La}
P. Landesman.
\newblock Generalized differential Galois theory.
\newblock Trans. Amer. Math.Soc. 360, pp.4441-4495, 2008.

\bibitem{Le}
O. Le\'on S\'anchez.
\newblock Geometric axioms for differentially closed field with several commuting derivations.
\newblock Journal of Algebra. Vol. 362, pp.107-116, 2012.

\bibitem{Ma}
D. Marker.
\newblock Embedding differential algebraic groups in algebraic groups.
\newblock www.sci.ccny.cuny.edu/~ksda/PostedPapers/Marker050109.pdf

\bibitem{Ma2}
D. Marker.
\newblock Model Theory: An Introduction.
\newblock Springer-Verlag. New York, Inc, 2002. 

\bibitem{Mc}
T. McGrail.
\newblock The model theory of differential fields with finitely many commuting derivations.
\newblock The Journal of Symbolic Logic. Vol. 65, No. 2, pp.885-913, 2000.

\bibitem{Moo}
R. Moosa, A. Pillay and T. Scanlon.
\newblock Differential arcs and regular types in differential fields.
\newblock J. reine angew. Math. 620, pp.35-54, 2008.

\bibitem{Pi}
A. Pillay.
\newblock Differential Galois theory I. 
\newblock Illinois Journal of Mathematics. Vol. 42, N. 4, 1998.

\bibitem{Pi2}
A. Pillay.
\newblock Algebraic D-groups and differential Galois theory. 
\newblock Pacific Journal of Mathematics. Vol. 216, No. 2, 2004.

\bibitem{Pi3}
A. Pillay.
\newblock Remarks on Galois cohomology and definability.
\newblock The Journal of Symbolic Logic. Vol. 62, No. 2, pp. 487-492, 1997.

\bibitem{Pi4}
A. Pillay.
\newblock Some foundational questions concerning differential algebraic groups.
\newblock Pacific Journal of Mathematics. Vol. 179, No.1, 1997.

%\bibitem{PiF}
%A. Pillay.
%\newblock Finite-dimensional differential algebraic groups and the Picard-Vessiot theory.
%\newblock In ``Differential Galois theory", Banach center publications 58. Polish Academy of Sciences, pp.189-199, 2002.

\bibitem{PiT}
A. Pillay.
\newblock Two remarks on differential fields.
\newblock Model Theory and Applications. Quaderni di Matematica. Volume 11, 2003. Edited by Dipartimento di Matematica, Seconda Universita di Napoli.

\bibitem{Pi6}
A. Pillay.
\newblock An introduction to stability theory.
\newblock Claredon Press. Oxford 1983.

%\bibitem{Pi5}
%A. Pillay.
%\newblock Lecture notes on applied stability theory. \\
%\newblock http://www.math.uiuc.edu/People/pillay/lecturenotes.applied.pdf

\bibitem{Po}
B. Poizat.
\newblock Stable Groups.
\newblock Mathematical Surveys and Monographs Vol. 87. American Mathematical Society 2001.

%\bibitem{So}
%S. Suer.
%\newblock Model theory of differentially closed fields with several commuting derivations.
%\newblock PhD Thesis. University of Illinois at Urbana-Champaign, 2007.

\bibitem{VS}
M. van der Put and M. Singer.
\newblock Galois theory of linear differential equations.
\newblock Springer-Verlag Berlin Heidelberg 2003

\end{thebibliography}

\end{document}